\newtheorem{thm}{Theorem}
\newtheorem{lemma}[thm]{Lemma}
\newtheorem{prop}[thm]{Proposition}
\newtheorem{corol}[thm]{Corollary}
\theoremstyle{definition}
\newtheorem{defi}[thm]{\textbf{Definition}}
\newtheorem{remark}[thm]{\textbf{Remark}}
\newtheorem{example}[thm]{\textbf{Example}}
\numberwithin{subsection}{section}
\newcommand{\C}{\mathbb C}
\newcommand{\Q}{\mathbb Q}
\newcommand{\Z}{\mathbb Z}
\newcommand{\Fc}{\mathscr F}
\newcommand{\Ic}{\mathfrak I}  
\newcommand{\IX}{\Ic_\X}
\newcommand{\IIX}{{\Ic \kern-.75em \Ic}_{\X}}
\newcommand{\II}{\Itwo}
\newcommand{\X}{\mathscr X}
\newcommand{\T}{\mathbb T}
\newcommand{\cho}{\mathfrak{ch}}
\DeclareMathOperator{\GL}{GL}
\DeclareMathOperator{\Spec}{Spec}
\DeclareMathOperator{\ch}{ch}
\DeclareMathOperator{\Td}{Td}
\DeclareMathOperator{\Stab}{Stab}
\newcommand{\bg}{\mathbf{g}}
\newcommand{\bm}{\mathbf{m}}
\newcommand{\ce}{\mathscr{E}}
\newcommand{\G}[1]{G^{[#1]}}
\newcommand{\Supp}{\operatorname{Supp}}
\newcommand{\Rep}{\operatorname{Rep}} 
\newcommand{\Lie}{\operatorname{Lie}}
\newcommand{\kclass}{\mathscr E} 
\newcommand{\Imult}[1]{\mathbb{I}^{#1}}  
\newcommand{\Itwo}{\Imult{2}}
\newcommand{\euler}{\varepsilon}
\newcommand{\musub}[1]{\mu_{#1}}  
\newcommand{\mui}{\musub{i}}
\newcommand{\esub}[1]{\mathbf{e}_{#1}}
\newcommand{\ej}{\esub{j}}
\begin{document}
\numberwithin{thm}{section}
\title{Logarithmic trace and orbifold products}
\author{Dan Edidin, Tyler J. Jarvis, Takashi Kimura}
\address{Department of Mathematics, University of Missouri, Columbia, MO 65211}
\email{edidind@missouri.edu}
\address{Department of Mathematics, Brigham Young University, Provo, UT 84602}
\email{jarvis@math.byu.edu}
\address{Department of Mathematics and Statistics, 111 Cummington Street, Boston University, Boston, MA 02215} 
\email{kimura@math.bu.edu}
\maketitle
\date{\today}
\begin{abstract}
The purpose of this paper is to give a purely equivariant definition
of orbifold Chow rings of quotient Deligne-Mumford stacks. This
completes a program begun in \cite{JKK:07} for quotients by finite
groups. The key to our construction is the definition (Section
\ref{subsec.twistedpullback}), of a twisted pullback in equivariant
$K$-theory, $K_G(X) \to K_G( \II_G(X))$ taking non-negative
elements to non-negative elements.  (Here 
$\II_G(X) = \{(g_1,g_2,x)|g_1x = g_2 x = x \} \subset G \times G \times X$.) 
The
twisted pullback is defined using data about fixed loci of elements
of finite order in $G$, but depends only on the underlying quotient
stack (Theorem~\ref{thm.twcanonical}). In our theory, the twisted
pullback of the class $\T \in K_G(X)$, corresponding to the tangent
bundle to $[X/G]$, replaces the obstruction bundle of the corresponding
moduli space of twisted stable maps. When $G$ is finite, the twisted
pullback of the tangent bundle agrees with the class $R({\mathbf
m})$ given in \cite[Definition 1.5]{JKK:07}. However, unlike in
\cite{JKK:07} we 
need not
compare our class to the 
class of the obstruction
bundle of Fantechi and G\"ottsche \cite{FaGo:03}
in order 
to prove that it is
a non-negative integral element of $K_G(\II_G(X))$.

We also give an equivariant description of the product on the orbifold
$K$-theory of $[X/G]$.  Our orbifold Riemann-Roch theorem
(Theorem~\ref{thm.orbrr}) states that there is an orbifold Chern
character homomorphism which induces an isomorphism of a canonical
summand in the orbifold Grothendieck ring with the orbifold Chow ring.
As an application we show (Theorem~\ref{thm.orbonkt}) that if $\X =
[X/G]$, then there is an associative orbifold product structure on
$K(\X)\otimes \C$ distinct from the usual tensor product.
\end{abstract}

\tableofcontents

\section{Introduction}
\subsection{Statement of results}
In this paper we work in the algebraic category and consider quasi-free
actions of arbitrary algebraic groups on arbitrary smooth
varieties (or more generally algebraic spaces). In practice most
smooth, separated Deligne-Mumford stacks have natural presentations as
quotients of the form $[X/G]$ with $X$ smooth and $G$ an
algebraic group acting properly on $X$. (The paper
\cite{EHKV:01} gives criteria for a Deligne-Mumford stack to be a
quotient stack. There are in fact no known examples of separated
Deligne-Mumford stacks which are provably not quotient stacks.) 

Our first main result is a purely equivariant description of the
orbifold product on the Chow groups of the inertia stack $\IX =
[I_G(X)/G]$, where
$$I_G(X)=\{(g,x)| gx =x\} \subseteq  G \times X.$$ 
The product depends only on data about fixed loci of elements of
finite order and makes no reference to moduli spaces and obstruction
bundles. This completes a program begun in \cite{JKK:07}.  In
particular we show, without use of the character formula of
\cite[Lemma 8.5]{JKK:07}, that when $G$ is finite the class
$R({\bm})$ given in \cite[Definition 1.5]{JKK:07} is a
non-negative integral element of $K$-theory. To do this we define
(Section~\ref{subsec.twistedpullback}), for arbitrary $G$ acting
quasi-freely on a smooth variety $X$, an explicit \emph{twisted pullback}
map on integral equivariant $K$-theory $K_G(X) \to K_G(\II_G(X))$,
where $\II_G(X) = I_G(X) \times_X I_G(X)$. The twisted pullback takes
non-negative elements to non-negative elements and depends only on the
underlying quotient stack $\X= [X/G]$---not on the 
specific choice of presentation 
$X$ and $G$. The twisted pullback we define for quotient  Deligne-Mumford
stacks is a special case of a more general construction given in Sections~\ref{sec.logtrace} and~\ref{sec.logres}. For the reader's
convenience we outline the construction in Section
\ref{subsec.introtwisted} below.

When $G$ is finite, the class $R({\bm})$ of \cite{JKK:07} is a
component of the twisted pullback of the tangent bundle $TX$. More
generally, for $G$ acting with finite stabilizer on $X$ we define a
product on the equivariant Chow groups $A^*_G(I_G(X))$ by the formula
\begin{equation*}
\alpha \star_{c_\T} \beta = \mu_*\left(
e_1^*\alpha \cup e_2^*\alpha \cup \euler(\T^{tw})
\right).
\end{equation*}
Here $\mu \colon \II_G(X) \to I_G(X)$ is 
induced from the group multiplication $G\times G\to G$,
the map $e_i$ is the projection onto the $i$-th factor, 
$\T^{tw}$ is the twisted pullback of
the class $\T \in K_G(X)$ corresponding to the tangent bundle of
$[X/G]$,
and $\euler(\T^{tw})$ denotes the Euler class.
In Theorem~\ref{thm.assoc}, we prove that the
$\star_{c_\T}$ product is commutative and associative by showing that
it satisfies the sufficiency conditions (Propositions
\ref{prop.chowidentity},\ref{prop.commutative},\ref{prop.assoc}) for an
\emph{inertial product} to be commutative and associative.
Example~\ref{exam.otherassoc} gives
an example of a different associative product on $A^*_G(I_G(X))$. An
interesting question is to classify all possible associative products
on the Chow groups $A^*_G(I_G(X))$.

The $\star_{c_\T}$ product can be explicitly calculated using the
decomposition of $I_G(X)$ into a disjoint sum of components
$\coprod_\Psi I(\Psi)$, where the sum runs over all conjugacy
classes $\Psi \subset G$ of elements of finite order. Here $I(\Psi) =
\{(g,x)| gx=x, g \in \Psi\}$. Note that this disjoint sum is finite,
as $I(\Psi) = \emptyset$ for all but finitely many conjugacy classes
$\Psi$.  The equivariant Chow groups $A^*_G(I(\Psi))$ may be
identified with $A^*_Z(X^g)$, where $g \in \Psi$ is any element and $Z
= Z_G(g)$ is the centralizer of $g$ in $G$. In this way the
$\star_{c_\T}$ product can be computed purely in terms of the fixed
point data of elements of finite order for the action of $X$ on $G$.

When $X$ is a smooth scheme an analogous definition can be made in
equivariant $K$-theory, replacing the 
Euler class of $\T^{tw}$ in the equivariant Chow group with the K-theoretic Euler class $\lambda_{-1}((\T^{tw})^*)$.
In this case we define
an \emph{orbifold Chern character} 
$$\cho \colon K_G(I_G(X)) \otimes \Q \to A^*_G(I_G(X))\otimes \Q$$ 
and prove (Theorem~\ref{thm.orbrr}) that it
preserves the corresponding
orbifold products.  The orbifold Chern character is not an isomorphism
but it restricts to an isomorphism  on a
summand in $K_G(I_G(X)) \otimes \Q$ which depends only on the stack
$[X/G]$. When $G$ is finite this summand equals the small orbifold
$K$-theory defined in \cite{JKK:07}.

Finally in Section~\ref{sec.orbonktheory},
we combine the orbifold Riemann-Roch theorem with the
non-Abelian localization theorem of \cite{EdGr:05} to obtain a
twisted, or orbifold, product on $K_G(X) \otimes \C$. Again this
product depends only on the underlying quotient stack $[X/G]$ and not
on the particular presentation.

\subsection{Twisted pullbacks}  \label{subsec.introtwisted}
Let $G$ be an algebraic
group acting on a space $X$ with arbitrary stabilizers. Let ${\bf m} =
(m_1, \ldots , m_l)$ be an $l$-tuple of elements 
in $G$
(not necessarily of
finite order) which lie in a compact subgroup $K \subset G$ and
satisfy $\prod_{i=1}^l m_i = 1$. And let 
$Z = Z_G(\bm) = \bigcap_{i=1}^l Z_G(m_i)$.
In Section
\ref{subsec.logres}, we define a map 
$K_G(X) 
\to K_Z(X^{\mathbf  m})$, called the \emph{logarithmic restriction},
where $X^\bm$ consists of the subset of $X$ consisting of points fixed by $m_i$ for all $i=1,\ldots,l$.
This map takes non-negative elements to 
non-negative
elements and can be used to define (Section
\ref{subsec.twistpullback}) a twisted pullback map 
$K_G(X)\to K_G(\Imult{l}(\Phi(\mathbf{m}))$, 
where $\Imult{l}(\Phi(\mathbf{m}))$ is the set of pairs $(\mathbf{g}, x) \subset G^l \times X$ such that
$\mathbf{g} = (g_1, \ldots , g_l)$ is conjugate to $\mathbf{m}$ under the diagonal conjugation action of $G$ on $G^l$
and $x$ is fixed by each $g_i$ for $i= 1, \ldots , l$. 
When $G$ acts 
quasi-freely,
the twisted pullback 
$K_G(X) \to K_G(\II_G(X))$ 
is
defined using the decomposition of $\II_G(X)$ into open and closed components
indexed by diagonal conjugacy classes in $G \times G$.  

In a subsequent paper, we plan to use the general twisted pullback construction
to define ``stack products'' for Artin quotient stacks.

The definition of the logarithmic restriction, and hence the twisted
pullback, is based on a $K$-theoretic version of an inequality for the
arguments of the eigenvalues of unitary matrices (Section~\ref{sec.logtrace}).
Precisely, if $g$ is a unitary matrix of rank $n$ we define the {\em
  logarithmic trace} $L(g)$ by the formula $L(g) =
\sum_{i=1}^{l}\alpha_i$, where 
$0 \leq \alpha_i < 1$
and
$\{\exp(2\pi\sqrt{-1}\alpha_i)\}_{i=1}^n$ are the eigenvalues of $g$.
The logarithmic trace can be extended to equivariant $K$-theory as
follows.   Suppose that $Y$ is a space with the action of an algebraic
group $Z$ and $V$ is a $Z$-equivariant bundle on $Y$. If $g \in U(n)$
acts on the fibers of $V \to Y$ 
and commutes with the action of $Z$,
then $V$ decomposes into $g$-eigenbundles, each of which is a $Z$-equivariant
vector bundle. As a result we may define
the logarithmic trace $L(g)(V)$ as
an element of 
$K_Z(Y)\otimes {\mathbb R}$.  A key fact, proved by Falbel and Wentworth
\cite{FaWe:06}, states that if $g_1,\ldots ,g_l$ are unitary matrices
satisfying 
$\prod_{i=1}^lg_i =1$ 
then $\sum_{i=1}^l L(g_i)\geq n - n_0$,
where $n_0$ is the 
dimension of the 
subspace fixed by 
$g_i$ for all $i\in\{1,\ldots,l\}$.
Applying this to
equivariant $K$-theory implies that if $g_1,\ldots ,g_l$ 
all act on the fibers of $V \to Y$, 
then $\sum_{i=1}^l L(g_i)(V) - V + V^{\bg}$
is a non-negative (integral) element in $K_Z(Y)$.  
When $Y = X^{{\bg}}$ and 
$Z = Z_G(\bg)$ for $\bg = (g_1,\ldots,g_n)$,
we obtain the logarithmic restriction map discussed above.

\subsection{Connection to orbifold cohomology and other literature}
Orbifold cohomology was originally defined by Chen and Ruan in their
landmark paper \cite{ChRu:04}. They showed that there is a
$\Q$-graded,
associative, super-commutative product structure on the
cohomology groups of the inertia orbifold $\IX$ associated to an
orbifold $\X$.  The orbifold cohomology ring is the degree-zero
part of the quantum cohomology of the orbifold $\X$, and the
product is defined via integration against the virtual fundamental
class of the moduli stack of ghost maps.

Subsequently there has been a great deal of interest in the orbifold
product. Simpler descriptions of orbifold
cohomology have been given for global quotient stacks, that is stacks
of the form $[X/G]$ with $X$ a manifold and $G$ a finite group,
\cite{FaGo:03, JKK:07}. The theory has also been extended to Chow
groups \cite{AGV:08, JKK:07} and $K$-theory \cite{JKK:07}. Borisov,
Chen and Smith calculated the orbifold Chow rings of toric stacks
\cite{BCS:05} and in symplectic geometry an orbifold cohomology for
torus actions was computed by Goldin, Holm and Knutson in
\cite{GHK:07}.

The orbifold Chow and $K$-theory rings we define here extend earlier
definitions of \cite{JKK:07} (and implicitly \cite{FaGo:03}) given for
actions of finite groups.  In this paper, we do not work with
equivariant cohomology, but the formalism we develop works equally
well for actions of compact Lie groups on almost complex
manifolds. The character formula of \cite[Lemma 8.5]{JKK:07} implies
that our product on equivariant cohomology of the inertia group scheme
agrees, after tensoring with $\Q$, with that defined by Fantechi and
G\"ottsche in \cite{FaGo:03}. However in both \cite{FaGo:03} and
\cite{JKK:07}, the orbifold product is defined on the $G$-invariant
part of the stringy cohomology (resp. Chow groups) of $X$. These groups
are rationally isomorphic to the equivariant cohomology of $I_G(X)$, but in
general they are a coarser invariant than equivariant cohomology. 
For example, if $G= \mu_n$ and $X = \Spec \C$, then the additive structure
on Fantechi and G\"ottsche's orbifold cohomology is the Abelian group
$\Z^n$, while the $\mu_n$ equivariant cohomology of $I_{\mu_n}(X)$ is additively
isomorphic to the Abelian group $(\Z[t]/nt)^n$.

For symplectic orbifolds which are quotients of tori, the equivariant
cohomology version of our product agrees with that defined by Goldin, 
Holm and Knutson in \cite{GHK:07}. The results of this paper may be 
viewed as a method (using 
different techniques) of 
extending their work to non-Abelian group actions.

When $X$ is a point and $G$ is finite, $K_G(I_G(X))$ is additively isomorphic to $K_G(G)$.
The orbifold product then endows an exotic product on $K_G(G)$. This product was previously studied by Lusztig \cite{Lu:87} in the context of Hecke algebras. Furthermore, Lusztig's ring $K_G(G)$ admits an interpretation \cite{AtSe:89}  as the Verlinde algebra of the finite group $G$ at level $0$ (see also \cite{KaPh:07}).

\subsection{Acknowledgments} This paper began during discussions of
the three authors at the BIRS workshop, \emph{Recent Progress on the
moduli space of curves} held March 16--20, 2008. The authors are
grateful to the organizers for the invitation. The first author was
supported by a University of Missouri Research Leave, MSRI and NSA
grant H98230-08-1-0059. 
The first author is grateful to UC Berkeley and MSRI for
hosting him while on leave. The second and third authors were
partially supported by NSF grant DMS-0605155.  The first author is 
grateful to Jonathan Wise and Maciej Zworski for helpful discussions.  
The second author thanks Jeffrey Humphreys for helpful discussions.

\section{Background}
{\bf Conventions:}
In this paper all schemes and algebraic spaces are assumed to be of
finite type over the complex numbers $\C$. All algebraic groups are
assumed to be linear, that is they are isomorphic to closed subgroups
of $\GL_n(\C)$ for some $n$.  We will sometimes use the term {\em linear algebraic group} for emphasis.

However, most of the formalism we develop
also works with equivariant cohomology replacing equivariant Chow
groups, for Lie group actions on almost complex manifolds.

We introduce some notation associated to groups which we will need. 
If $G$  is an algebraic group, we denote the Lie algebra of $G$ by $\Lie(G)$. 
For all $m$ in $G$, let $Z_G(m)$ denote the \emph{centralizer of $m$ in $G$}. For all $\bm = (m_1,\ldots,m_n)$ in $G^n$, denote the \emph{centralizer of $\bm$ in $G$} by $Z_G(\bm)$. 
It consists of all elements commuting with $m_i$ for all $i=1,\ldots,n$. 
For all $n$, the set $G^n$ has a \emph{(diagonal) conjugation action of $G$} defined by $g\cdot (m_1,\ldots,m_n) := (g m_1 g^{-1},\ldots,g m_n g^{-1})$ for all $g$ and $m_i$ in $G$.
A $G$-orbit $\Phi$ of $G^n$ is a called a \emph{diagonal conjugacy class (of length $n$)}, while $\Phi(\bm)$ denotes the diagonal conjugacy class containing $\bm$ in $G^n$. The set of  diagonal conjugacy classes of length n is denoted by $\G{n}$.

\subsection{Group actions and quotient stacks}  
In this paper we will consider three related notions for the action of an 
algebraic group $G$ on a scheme (or more generally algebraic space) $X$.

\begin{defi}
Let $G$ be an algebraic group acting on an algebraic space $X$.  The
\emph{inertia group scheme} $I_G(X)$ is
defined as $$I_G(X):=\{(g,x)| gx =
x\}\subseteq G\times X.$$
\end{defi}
\begin{remark}
If $X$ is an algebraic space then $I_G(X)$ is also an algebraic
space. However the map $I_G(X) \to X$ is representable in the
category of schemes; i.e $I_G(X)$ is an $X$-scheme. For this reason
we refer to $I_G(X)$ as the inertia group scheme even when $X$ 
is algebraic space.
\end{remark}

\begin{defi}  
Let $G$ be an 
algebraic group acting on an algebraic space $X$.
\begin{enumerate}
\item[(i)] We say that $G$ acts \emph{properly} on $X$ if the map $G \times X
\to X \times X$, defined by $(g,x) \mapsto (x,gx)$ is proper.

\item[(ii)] We say that $G$ acts with \emph{finite stabilizer} if the
projection $I_G(X) \to X$ is finite.

\item[(iii)] We say that $G$ acts 
\emph{quasi-freely} 
if the projection $I_G(X) \to X$ is quasi-finite.
\end{enumerate}
\end{defi}

Since $G$ is affine, the map $G \times X \to X \times X$ is finite
if it is proper. The projection $I_G(X) \to X$ is obtained from the
map $G \times X \to X \times X$ by base change along the diagonal
morphism $X \to X \times X$. Hence (i) implies (ii). Moreover the
geometric fibers of the map $I_G(X) \to X$ are the stabilizer groups
and condition (iii) is equivalent to the requirement that the
stabilizer group of any geometric point is finite.  
Since we work in characteristic 0, the quotient stack $[X/G]$ is a
Deligne-Mumford stack (DM stack) if and only if $G$ acts 
quasi-freely.
If $G$ is a finite group, then the action is automatically proper. In general,
$G$ acts properly if and only if $[X/G]$ is a separated DM stack.

In order to construct the orbifold product, we need to push-forward
along the morphism $I_G(X) \times_X I_G(X) \to I_G(X)$. As a result we
require throughout most of the paper that $G$ acts with finite stabilizer.  We also remark that the condition that $G$ act with finite
stabilizer is the necessary separation hypothesis required for the
existence of a coarse moduli space of the quotient stack $[X/G]$
\cite{KeMo:97}.

\begin{defi}
Following \cite{EHKV:01}, we say that a stack $\X$ is a
\emph{quotient stack} if $\X$ is equivalent to a stack of the form
$[X/G]$, where $X$ is an algebraic space and $G$ is a 
{linear} algebraic
group. 
\end{defi}
 Most stacks that naturally arise in
algebraic geometry are quotient stacks. The papers \cite{EHKV:01} and 
\cite{Tot:04}
deal with criteria for determining when a stack is a quotient stack.

\begin{defi}
An \emph{algebraic orbifold} is a smooth DM stack which is
generically represented by a scheme; i.e., the automorphism group at a
general point is trivial.
\end{defi}
\begin{prop}\cite{EHKV:01}
Any algebraic orbifold is a quotient stack.
\end{prop}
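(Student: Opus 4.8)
The plan is to exhibit on the orbifold a vector bundle whose associated frame bundle is an algebraic space, and then to invoke the criterion of \cite{EHKV:01}: a Deligne-Mumford stack $\X$ carrying a rank-$n$ vector bundle $V$ whose $\GL_n$-frame bundle $\mathrm{Fr}(V)$ is an algebraic space is tautologically of the form $[\mathrm{Fr}(V)/\GL_n]$, hence a quotient stack. The content of that criterion is that $\mathrm{Fr}(V)$, being a priori an algebraic stack, is an algebraic space exactly when it has trivial geometric stabilizers; and a point of $\mathrm{Fr}(V)$ over $x$ is a frame of $V_x$, whose automorphisms are the elements of $\Aut(x)$ acting trivially on $V_x$. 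Thus $\mathrm{Fr}(V)$ has trivial stabilizers if and only if, for every geometric point $x$, the representation of $\Gamma_x := \Aut(x)$ on the fiber $V_x$ is faithful. The whole problem therefore reduces to producing a bundle with faithful fiberwise stabilizer representations.

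Since $\X$ is smooth, the natural candidate is the tangent bundle, a vector bundle of rank $n = \dim \X$; I would take $V = T_{\X}$. It then remains to verify that $\Gamma_x$ acts faithfully on $T_x\X$ for each geometric point $x$. For this I would pass to an \'etale-local presentation $\X \cong [U/\Gamma_x]$ with $U$ smooth and $x$ the image of a point $u \in U$ fixed by $\Gamma_x$, so that $T_x\X \cong T_u U$ with its induced $\Gamma_x$-action. The orbifold hypothesis is used precisely here: because the generic stabilizer is trivial, $\Gamma_x$ acts generically freely, and in particular faithfully, on $U$. (By contrast, for a stack such as $B\mu_n$ the local action is not faithful and $T_{\X}$ is the zero bundle, so the argument genuinely needs generic triviality.)

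The heart of the matter, and the step I expect to be the main obstacle, is the local claim that a finite group acting faithfully on a smooth $\C$-variety acts faithfully on the tangent space at a fixed point. I would prove this by linearizing the action: in characteristic $0$ one can choose $\Gamma_x$-invariant formal coordinates at $u$ --- for instance by averaging a coordinate system over the finite group --- so that $\Gamma_x$ acts on $\widehat{\mathcal{O}}_{U,u}$ through its linear action on $\mathfrak{m}_u/\mathfrak{m}_u^2$. If some $g \in \Gamma_x$ acted trivially on $T_u U$, it would then act trivially on the completed local ring, hence on a Zariski neighborhood of $u$, hence --- $U$ being smooth and irreducible in the chart --- trivially on $U$, contradicting faithfulness. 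Therefore $\Gamma_x \hookrightarrow \GL(T_u U)$ is injective, and the fiberwise representations of $T_{\X}$ are faithful.

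With faithfulness established, $\mathrm{Fr}(T_{\X})$ is a Deligne-Mumford stack with trivial stabilizers, hence an algebraic space $Y$, and the identification $\X \cong [Y/\GL_n]$ exhibits $\X$ as a quotient stack, completing the argument. The only delicate points are the equivariant linearization used above (clean in characteristic $0$) and the bookkeeping identifying the automorphisms of a frame with the kernel of the fiber representation; everything else is formal.
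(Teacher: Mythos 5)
Your proposal is correct, and it is essentially the proof of the cited reference \cite{EHKV:01} — which is all the paper itself offers for this proposition: one presents $\X$ as $[\mathrm{Fr}(T_\X)/\GL_n]$ via the criterion that a vector bundle whose geometric stabilizers act faithfully on its fibers yields a quotient-stack presentation, and verifies faithfulness for the tangent bundle by the characteristic-zero (Cartan) linearization argument at a fixed point of a generically free action, exactly as you do. The only points needing routine care, which do not affect the argument, are the bookkeeping over connected components (faithfulness on the component through the fixed point, and the possibly nonconstant rank of $T_\X$).
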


\subsection{The inertia group scheme and inertia stack}

\begin{defi}
If $G$ is an algebraic group acting on an algebraic space $X$,
then it induces a $G$-action on $G\times X$ via $$g\cdot (m,x) := (g m g^{-1}, g x).$$
This action preserves $I_G(X)$,
and the quotient $$\IX = [I_G(X)/G]$$ is the \emph{inertia stack
of $\X = [X/G]$}.
\end{defi}
\begin{remark}
If $G$ acts with finite stabilizer on $X$, then the
projection map $\IX \to \X$ is finite.
\end{remark}

\begin{defi}
Let $\Psi$ be a conjugacy class in $G$ and let 
$$I(\Psi) := \{(g,x)|gx =x, g \in \Psi\} \subseteq G\times X.$$ 
\end{defi}
If $G$ acts 
quasi-freely
(in particular
if the action is proper), then $I(\Psi) = \emptyset$ unless $\Psi$
consists of elements of finite order. Since we work in characteristic
$0$, any element of finite order is semi-simple, so its conjugacy class is
closed in $G$ by \cite[Theorem 9.2]{Bor:91}. It follows that $I(\Psi)$
is closed in $I_G(X)$, since it is the inverse image of $\Psi$ under
the projection $I_G(X) \to G$.
\begin{lemma} \label{lem.finite}
If $G$ acts 
quasi-freely,
then all but finitely many of the
$I(\Psi)$ 
are empty.
\end{lemma}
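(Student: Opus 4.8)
The plan is to reduce the statement to a purely group-theoretic finiteness: I will show (A) that quasi-freeness bounds the orders of the elements $g$ that can occur in $I_G(X)$, and (B) that a linear algebraic group has only finitely many conjugacy classes of elements of bounded order. Granting these, the $\Psi$ with $I(\Psi)\neq\emptyset$ consist of finite-order elements $g$ for which $X^g\neq\emptyset$, and all such $g$ lie in the closed subscheme $G[d]=\{g\in G\mid g^d=1\}$ for a single integer $d$; since $G[d]$ meets only finitely many conjugacy classes, only finitely many $I(\Psi)$ are nonempty. (Recall that the excerpt already establishes that each $I(\Psi)$ is closed, that the $I(\Psi)$ are disjoint, and that they cover $I_G(X)$ because every stabilizer is finite, so every $g$ occurring is of finite order.)

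For step (A) I would use that the action is quasi-free, so the projection $I_G(X)\to X$ is quasi-finite and of finite type. By Zariski's Main Theorem it factors as an open immersion followed by a finite morphism, and a finite morphism of finite type has fibers of uniformly bounded cardinality; hence there is an integer $N$ with $|\Stab_G(x)|\leq N$ for all $x$. Every occurring $g$ then satisfies $g^d=1$ with $d=\mathrm{lcm}(1,\dots,N)$. An equivalent, slightly more geometric packaging of the whole argument is worth recording: after fixing an embedding $G\hookrightarrow \GL_n(\C)$, the eigenvalues of any such $g$ are $d$-th roots of unity, so the morphism $I_G(X)\to \A^n$, $(g,x)\mapsto \operatorname{charpoly}(g)$, has image in a fixed finite set; being a morphism from a scheme to a finite discrete set it is constant on each connected component of the Noetherian (hence quasi-compact, finitely-many-component) space $I_G(X)$. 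Together with the closedness of semisimple conjugacy classes (\cite[Theorem 9.2]{Bor:91}) this shows each $I(\Psi)$ is a union of connected components, i.e.\ clopen, and a quasi-compact space admits no infinite family of nonempty pairwise disjoint open sets.

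The substance, and the step I expect to be the main obstacle, is step (B): the finiteness of conjugacy classes of elements of order dividing $d$. When $G$ is connected and reductive this is clean, since in characteristic $0$ every torsion element is semisimple, hence conjugate into a fixed maximal torus $T$, and the $d$-torsion classes are indexed by the finite set $T[d]/W$. For general $G$ I would reduce to this: $G$ has finitely many components, and on the identity component the Levi decomposition $G^\circ=R_u\rtimes L$ together with the vanishing of unipotent torsion in characteristic $0$ conjugates every torsion element into the reductive Levi $L$. The genuinely delicate point is the passage from a fixed $\GL_n(\C)$-conjugacy class to $G$-conjugacy classes—equivalently, controlling torsion elements in the non-identity components—which requires the structure theory of disconnected (quasi-semisimple) conjugacy classes; this is where I would have to be most careful, and it is the only place the argument uses more than formal properties of quasi-finite morphisms.
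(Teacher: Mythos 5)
Your step (A) is fine: quasi-finiteness plus Zariski's Main Theorem and coherence of the pushforward does give a uniform bound $N$ on stabilizer orders, hence a single $d$ with all occurring $g$ in $G[d]=\{g\in G\mid g^d=1\}$. But the proposal then stands or falls on step (B), and (B) is never actually proven: you handle connected $G$ (Levi decomposition plus conjugacy into a maximal torus), and you explicitly defer the disconnected case to unspecified ``structure theory of quasi-semisimple classes.'' That deferred case is precisely the crux, and the same hole reappears in your ``geometric packaging'': constancy of the characteristic polynomial on connected components of $I_G(X)$ only pins down the $\GL_n(\C)$-conjugacy class, and a single $\GL_n(\C)$-class could a priori meet $G$ in infinitely many $G$-classes. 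Note also that disjointness and closedness of the $I(\Psi)$ do not by themselves rule out an infinite cover (a Noetherian space can be an infinite disjoint union of closed sets, e.g.\ $\A^1$ as a union of points), so without (B) you have not shown the $I(\Psi)$ are clopen. The fact you need is true and can be proven with no connectedness hypothesis: every $g\in G[d]$ is semisimple, so $\mathrm{Ad}(g)$ is semisimple with eigenvalues $d$-th roots of unity; a first-order computation shows the tangent space of $G[d]$ at $g$ is contained in $\mathrm{Im}(\mathrm{Ad}(g)-1)$, which is the tangent space of the conjugation orbit, so every orbit is open in $G[d]$, and Noetherianity then forces finitely many orbits. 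But as written, your argument has a gap exactly at its load-bearing step.

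It is worth seeing how the paper sidesteps this entirely, because that is the idea your approach is missing. The paper stratifies $X$ rather than analyzing $G$: by generic flatness there is an open set $U$ over which $I_G(X)\to X$ is finite and \'etale, so over the $G$-invariant open set $W=GU$ all stabilizers are conjugate to a single finite subgroup $H\subset G$; then $I(\Psi)$ can meet the part of $I_G(X)$ over $W$ only if $\Psi\cap H\neq\emptyset$, and there are at most $|H|$ such classes. Noetherian induction on the complement $X\smallsetminus W$ finishes the proof. Thus the paper only ever needs finiteness of conjugacy classes meeting one fixed finite subgroup---a triviality---instead of finiteness of all $d$-torsion classes in an arbitrary (possibly disconnected) linear algebraic group. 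If you want to salvage your route, supply the orbit-openness argument for $G[d]$ sketched above; otherwise the stratification argument is both shorter and more elementary.
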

\begin{proof}
Without loss of generality, we may assume that $G$ acts transitively on
the set of connected components of $X$. Let $X^0$ be a connected
component,
and let $U \subset X^0$ be an open set over which the fibers of the
map $I_G(X) \to X$ are finite and flat (and hence \'etale). Let $W =
GU$. Over the $G$-invariant open set $W$, all stabilizers are conjugate
to a fixed finite subgroup $H \subset G$. The complement,  $X \smallsetminus W$
is a union of $G$-invariant subspaces of strictly smaller
dimension. By Noetherian induction, it suffices to prove the
proposition for the open set $W$. Thus we are reduced to proving the
proposition under the assumption that the stabilizer at every point of $X$
is conjugate to a fixed subgroup $H \subset G$.

Let $\Psi \subset G$ be a conjugacy class. Under the 
assumptions on 
$X$, we have $I(\Psi)  = \emptyset$ unless $\Psi \cap  H \neq \emptyset$.
Since $H$ is finite, there can be only finitely many such 
$\Psi$.
\end{proof}
\begin{prop} \cite{EdGr:05} \label{prop.inertiadecomp} 
If $G$ acts
quasi-freely
on $X$, then $I_G(X)$ is the disjoint union
of the finitely many non-empty $I(\Psi)$. 
 In particular, the
$I(\Psi)$ 
 are disjoint sums of connected components of $I_G(X)$.
\end{prop}
\begin{proof} 
Since distinct conjugacy classes are disjoint, the $I(\Psi)$
are
also disjoint. As noted above, the $I(\Psi)$
are also closed. By
definition, every closed point in $I_G(X)$ lies
on some non-empty $I(\Psi)$. Since there are only finitely many
such $I(\Psi)$,
 it follows that the complement of the union of
the $I(\Psi)$ 
is a Zariski open set which contains no closed
points. Since we work over an algebraically closed field, the
complement must be empty.
\end{proof}

\subsection{Multiple inertia schemes}
\begin{defi}
Let $\II_G(X) = I_G(X) \times_X I_G(X)$.  As a set, we have
$$\II_G(X) = \{(g_1,g_2,x)|g_1x = g_2x =x\}.$$ We refer to
$\II_G(X)$ as the 
\emph{double inertia scheme.}  
More generally, we define $$\Imult{l}_G(X):=\underbrace{I_G(X)\times_X \cdots \times_X I_G(X)}_{l \text{ times}} \subseteq G^l\times X.$$
The multiple inertia $\Imult{l}_G(X)$ has $G$-action taking $$m\cdot(g_1,g_2,\dots,g_l,x) := (m g_1 m^{-1}, m g_2 m^{-1},\dots,m g_l m^{-1}, m x).$$
We call the quotient stack
$[\II_G(X)/G]$ the 
\emph{double inertia stack }
$\IIX$. It is
equivalent to the fiber product 
$\IX \times_\X \IX$.  

\end{defi}
\begin{defi}
Given an $l$-tuple of elements $\bg:=(g_1, g_2,\dots,g_l) \in G^l$ define $\Phi(g_1,g_2,\dots,g_l)
\in \G{l}$ to be their orbit under the diagonal action of $G$ by
conjugation on each factor.  Define $$\Imult{l}(\Phi):=\{(g_1,g_2,\dots,g_l,x)|\bg\in \Phi\}\subseteq \Imult{l}_G(X)$$ 
\end{defi}

Clearly, if $G$ acts 
quasi-freely,
then $\Imult{l}(\Phi(\bg))$
is empty unless $g_i$ has finite order for all $i =1, \ldots , l$. A key observation
is that something stronger holds.
\begin{lemma} \label{lem.finitegroup}
If 
$(m_1, \ldots , m_l , x) \in \II_G(X)$, 
then $H = \langle m_1, \ldots , m_l \rangle$ is a
finite group.
\end{lemma}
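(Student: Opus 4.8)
The plan is to deduce the finiteness of $H$ from the finiteness of the stabilizer of the common fixed point $x$, rather than from any property of the individual generators $m_i$. It is worth stressing at the outset that finiteness of $H$ really is stronger than the assertion that each $m_i$ has finite order: inside a linear algebraic group finitely many torsion elements may perfectly well generate an infinite discrete subgroup (for instance, suitable elements of order $2$ and $3$ in $\PGL_2(\C)$ generate a copy of $\mathrm{PSL}_2(\Z)$, which is infinite). So the orders of the generators alone cannot force $H$ to be finite; what does the work is that the $m_i$ all fix the \emph{same} point $x$.

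First I would unwind the hypothesis. By definition of the multiple inertia $\Imult{l}_G(X)$, the condition that $(m_1,\ldots,m_l,x)$ lie in it says exactly that $m_i x = x$ for every $i$, i.e.\ that each generator lies in the stabilizer $\Stab_G(x) = \{g \in G \mid gx = x\}$. Since $\Stab_G(x)$ is a closed subgroup of $G$, the whole subgroup it generates satisfies $H = \langle m_1,\ldots,m_l\rangle \subseteq \Stab_G(x)$. Thus it suffices to prove that $\Stab_G(x)$ is finite. For that I would invoke the standing finite-stabilizer (equivalently quasi-free) hypothesis in force throughout this part of the paper: as recorded in the discussion following the definition of quasi-free actions, that hypothesis is equivalent to the statement that the stabilizer of every geometric point of $X$ is a finite group. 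In particular $\Stab_G(x)$ is finite, and being a subgroup of a finite group, $H$ is finite, as claimed.

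The main thing to flag is that there is no serious technical obstacle here once the right reduction is made; the only genuine content is conceptual, namely locating the correct source of finiteness—the shared fixed point, hence a single finite stabilizer—rather than the orders of the $m_i$. What deserves emphasis instead is that the finite-stabilizer hypothesis is indispensable and not a mere convenience: without it the conclusion fails outright, as one already sees for $G = \Gm$ acting trivially on a point, where any non-torsion $m_1$ generates an infinite $H$. In characteristic $0$ the passage from ``the fiber of $I_G(X)\to X$ over $x$ is quasi-finite'' to ``$\Stab_G(x)$ is a finite group'' is automatic, since stabilizers are reduced; this is the one place where working over $\C$ is quietly used, and it is the only step I would double-check.
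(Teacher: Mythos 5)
Your proof is correct and is essentially the paper's own argument: every element of $H$ fixes $x$, so $H$ lies in the stabilizer of $x$, which is finite because the action is quasi-free. One tiny caveat: ``finite stabilizer'' and ``quasi-free'' are not equivalent hypotheses in this paper (the former asks that $I_G(X)\to X$ be finite, the latter only quasi-finite), but this is immaterial here since quasi-freeness alone already gives finiteness of each geometric point's stabilizer, which is all your argument uses.
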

\begin{proof}
  If $h \in H$, then $hx = x$ (since $m_ix = x$ for all $m_i$). Since $G$ acts
quasi-freely,
we conclude that $H$ must be a finite group.
\end{proof}
\begin{lemma}If $G$ acts 
quasi-freely
on $X$, then 
$\Imult{l}(\Phi)$ is closed in $\Imult{l}_G(X)$ for all $\Phi$ in $\G{l}$.
\end{lemma}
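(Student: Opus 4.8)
The plan is to realize $\Imult{l}(\Phi)$ as a preimage and reduce the statement to a closedness assertion for a single diagonal conjugacy class inside $G^l$. Let $q\colon \Imult{l}_G(X) \to G^l$ be the projection $(g_1,\dots,g_l,x)\mapsto(g_1,\dots,g_l)$, which is a morphism and hence continuous for the Zariski topology. By definition $\Imult{l}(\Phi)=q^{-1}(\Phi)$, so it suffices to show that $\Phi$ is closed in $G^l$. If $\Imult{l}(\Phi)=\emptyset$ there is nothing to prove, so I may assume there is a point $(\bm,x)\in\Imult{l}(\Phi)$; by Lemma~\ref{lem.finitegroup} the subgroup $H=\langle m_1,\dots,m_l\rangle$ is finite, and $\Phi=\Phi(\bm)$ is the diagonal conjugacy class of an $l$-tuple generating a finite group. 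Thus everything reduces to the following claim: the diagonal $G$-orbit of an $l$-tuple generating a finite subgroup is Zariski closed in $G^l$.

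To prove the claim I first confine $\overline{\Phi}$ to the representation variety of $H$. For each word $w$ in the kernel of the surjection $F_l\twoheadrightarrow H$ sending the standard generators to $m_1,\dots,m_l$, evaluation $\operatorname{ev}_w\colon G^l\to G$ is a morphism that is identically $1$ on $\Phi$ (since $\operatorname{ev}_w(g\cdot\bm)=g\,w(\bm)\,g^{-1}=1$), hence identically $1$ on $\overline{\Phi}$. Therefore $\overline{\Phi}$ is contained in the closed subvariety $R\subseteq G^l$ consisting of tuples satisfying all relations of $H$, i.e.\ $R=\Hom(H,G)$, a homomorphism being recorded by the images of the generators. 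The inclusion $\iota\colon H\hookrightarrow G$ is the point $\bm\in R$, its $G$-orbit under diagonal conjugation is exactly $\Phi$, and since $R$ is closed in $G^l$ the set $\overline{\Phi}$ equals the closure of $\Phi$ taken inside $R$. The key input is now the rigidity of homomorphisms from a finite group: since $H$ is finite and we are in characteristic $0$, one has $H^1(H,\Lie(G))=0$ for the $H$-module structure on $\Lie(G)$ induced by $\operatorname{Ad}$ composed with any $\phi\in\Hom(H,G)$. As the Zariski tangent space to $\Hom(H,G)$ at $\phi$ is $Z^1(H,\Lie(G))$ while the tangent space to the orbit is $B^1(H,\Lie(G))$, the vanishing of $H^1$ forces every $G$-orbit in $R$ to be open. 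Orbits partition $R$, so each orbit is also closed; in particular $\Phi$ is closed in $R$, whence $\overline{\Phi}=\Phi$ and $\Phi$ is closed in $G^l$. Applying $q^{-1}$ shows $\Imult{l}(\Phi)$ is closed in $\Imult{l}_G(X)$.

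The heart of the matter---and the only nontrivial step---is the passage from individual to simultaneous conjugacy encoded by the rigidity statement $H^1(H,\Lie(G))=0$: without it a boundary tuple $\bg\in\overline{\Phi}$ could satisfy all relations of $H$ and have each $g_i$ conjugate to $m_i$ while failing to be simultaneously conjugate to $\bm$. This is the exact multi-element analogue of the single-element fact (used for $I(\Psi)$ via \cite[Theorem~9.2]{Bor:91}) that the conjugacy class of a finite-order element is closed. I expect the main care to be needed in justifying the tangent-space/dimension count that turns $H^1=0$ into openness of orbits, which can be carried out on the reduced scheme since closedness is a topological property. As a reassuring cross-check, one may instead first decompose $\Imult{l}_G(X)=\coprod I(\Psi_1)\times_X\cdots\times_X I(\Psi_l)$ into open and closed pieces using Proposition~\ref{prop.inertiadecomp}, which reduces the problem to separating the finitely many diagonal classes $\Phi$ sharing a fixed profile $(\Psi_1,\dots,\Psi_l)$; this separation is again precisely the rigidity statement above.
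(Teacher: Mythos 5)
Your proof is correct, and it takes a genuinely different route from the paper's at the decisive step --- a difference that turns out to matter. Both arguments start identically: $\Imult{l}(\Phi)$ is the preimage of $\Phi$ under the projection $\Imult{l}_G(X)\to G^l$, so in the nonempty case one must show $\Phi$ is closed in $G^l$. At that point the paper discards the finiteness of $H=\langle m_1,\ldots,m_l\rangle$ and argues from semisimplicity of the components alone: it asserts that an $l$-tuple normalizes the diagonal subgroup $G\subset G^l$ and invokes \cite[Theorem 9.2]{Bor:91}, exactly as in the single-class case $I(\Psi)$. You instead keep Lemma~\ref{lem.finitegroup} in play, trap $\overline{\Phi}$ inside the representation scheme $R=\Hom(H,G)\subset G^l$, and use Weil rigidity ($H^1(H,\Lie(G))=0$ for finite $H$ in characteristic zero) to make every $G$-orbit in $R$ open, hence closed. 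The deformation-theoretic step you flag does close correctly and classically: $T_\phi R=Z^1(H,\Lie(G))$ scheme-theoretically (on the reduced scheme one has only a containment, which is all the dimension count requires), the orbit is smooth with tangent space $B^1(H,\Lie(G))$, and the chain $\dim(G\cdot\phi)=\dim B^1=\dim Z^1\geq \dim_\phi R\geq \dim(G\cdot\phi)$ forces openness.

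The comparison is not symmetric, because the paper's argument is defective exactly where yours is careful. A tuple $(g_1,\ldots,g_l)$ normalizes the diagonal only when the $g_i$ agree up to central elements, and the statement the paper reduces to --- that diagonal conjugacy classes of tuples of semisimple elements are closed in $G^l$ --- is false for $l\geq 2$. In $G=\mathrm{SL}_2$ take $m_1=\mathrm{diag}(2,\tfrac12)$ and $m_2=\left(\begin{smallmatrix}2&1\\0&1/2\end{smallmatrix}\right)$, both semisimple; conjugation by $\mathrm{diag}(t,t^{-1})$ fixes $m_1$ and sends $m_2$ to $\left(\begin{smallmatrix}2&t^2\\0&1/2\end{smallmatrix}\right)$, so $(m_1,m_1)\in\overline{\Phi(m_1,m_2)}\setminus\Phi(m_1,m_2)$. (This does not contradict the lemma: that pair generates an infinite group, so $\Imult{2}(\Phi(m_1,m_2))=\emptyset$ for any quasi-free action.) What makes the lemma true is precisely the finiteness of $H$ --- for reductive $G$ this is Richardson's criterion that the orbit of a tuple is closed if and only if the closure of the group it generates is reductive --- and that is the hypothesis your rigidity argument exploits and the paper's proof throws away. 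So your proposal is not merely an alternative; as written, it is the argument that actually proves the lemma, and it does so for an arbitrary linear algebraic group $G$.
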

\begin{proof}
If $\Phi \in \G{l}$ is a diagonal conjugacy class, then
$\Itwo(\Phi)$ is the inverse image of $\Phi$ under the projection
$\Imult{l}_G(X) \to     \G{l}$
Thus to prove the proposition it suffices to
show that conjugacy classes of 
$l$-tuples of semi-simple elements are closed
in $G^l$. Since an $l$-tuple $(g_1,\ldots , g_l) \in G^l$ normalizes
the diagonal subgroup $G \subset G^l$, we can again invoke
\cite[Theorem 9.2]{Bor:91} to conclude that if $g_1, \ldots , g_l$ are
semi-simple, then $\Phi(g_1,\ldots , g_l)$ is closed in $G^l$.
\end{proof}
\begin{lemma} \label{lem.finite2}If $G$ acts 
quasi-freely
then $\Imult{l}(\Phi)$ is  empty for all but finitely many 
$\Phi$ in $\G{l}$.
\end{lemma}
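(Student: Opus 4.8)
The plan is to adapt the Noetherian induction used to prove Lemma~\ref{lem.finite}, reducing to a situation in which the stabilizers are all conjugate to a single fixed finite subgroup. Exactly as there, I would assume $G$ acts transitively on the connected components of $X$, choose a connected component $X^0$ and an open set $U \subset X^0$ over which $I_G(X) \to X$ is finite and flat, and set $W = GU$. Over the $G$-invariant open set $W$ every stabilizer is conjugate to a fixed finite subgroup $H \subset G$, while the complement $X \smallsetminus W$ is a union of $G$-invariant subspaces of strictly smaller dimension. A diagonal conjugacy class $\Phi \in \G{l}$ has $\Imult{l}(\Phi) \neq \emptyset$ if and only if the corresponding piece is nonempty over $W$ or over $X \smallsetminus W$; since the classes occurring over $X \smallsetminus W$ form a finite set by the inductive hypothesis, it suffices to bound the classes occurring over $W$.

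So assume every stabilizer is conjugate to the fixed finite subgroup $H$. The key observation---the length-$l$ analogue of Lemma~\ref{lem.finitegroup}---is that if $(g_1, \ldots, g_l, x) \in \Imult{l}_G(X)$ then all of $g_1, \ldots, g_l$ lie in the single stabilizer group $G_x = \Stab_G(x)$. By hypothesis $G_x = a H a^{-1}$ for some $a \in G$, so one may write $g_i = a h_i a^{-1}$ with $h_i \in H$ for each $i$. Because the \emph{same} element $a$ conjugates every $g_i$ into $H$, we have $(g_1,\ldots,g_l) = a\cdot(h_1,\ldots,h_l)$ under the diagonal conjugation action, and hence $\Phi(g_1,\ldots,g_l) = \Phi(h_1,\ldots,h_l)$ with $(h_1,\ldots,h_l) \in H^l$.

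Consequently every class $\Phi$ with $\Imult{l}(\Phi) \neq \emptyset$ (over $W$) is the diagonal conjugacy class of some $l$-tuple in the finite set $H^l$, of which there are at most $|H|^l$. Combining this with the inductive bound on $X \smallsetminus W$ gives finiteness on all of $X$, completing the argument.

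I do not expect a serious obstacle here; the content is essentially the same as in Lemma~\ref{lem.finite}, with the one point deserving care being that a \emph{single} group element $a$ simultaneously conjugates all the $g_i$ into $H$ (since they share the stabilizer $G_x$). This is precisely what is needed to control the \emph{diagonal} conjugacy class in $\G{l}$, rather than merely the individual conjugacy classes of the $g_i$, and it is why the naive product bound $|H|^l$ suffices.
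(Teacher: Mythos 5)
Your proof is correct and follows essentially the same route as the paper: Noetherian induction reducing to the case where every stabilizer is conjugate to a fixed finite subgroup $H$, followed by the observation that any nonempty $\Imult{l}(\Phi)$ forces $\Phi \cap H^l \neq \emptyset$, so at most $|H|^l$ classes can occur. Your explicit justification that a \emph{single} element $a$ conjugates the whole tuple into $H^l$ (because all the $g_i$ lie in the one stabilizer $G_x$) is exactly the point the paper leaves implicit, and is the right thing to emphasize.
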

 \begin{proof}
The proof is almost identical to the proof of Lemma~\ref{lem.finite}.
Again by Noetherian induction we may reduce to the case when the stabilizer
at every closed point of $X$ is conjugate to a fixed finite subgroup $H
\subset G$.
If $\Phi \subset G \times G$ is a diagonal conjugacy class, then
for all $\Phi$ in $\G{l}$,
$\Imult{l}(\Phi)  = \emptyset$ unless $\Phi \cap  (H^l) \neq \emptyset$.
Since $H^l$ is finite, there can be only finitely many such $\Phi$. 
\end{proof}

The same argument used in the proof of Proposition
\ref{prop.inertiadecomp} yields a decomposition result for $\Imult{l}_G(X)$.
\begin{prop} \label{prop.inertiadecomp2} 
If $G$ acts
quasi-freely
on $X$, then $\Imult{l}_G(X)$ is the disjoint union
of the finitely many non-empty $\Imult{l}(\Phi)$. 
In particular, the
$\Imult{l}(\Phi)$ 
are disjoint sums of connected components of 
$\Imult{l}_G(X)$.
\end{prop}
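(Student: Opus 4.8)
The plan is to run the same topological argument that proves Proposition~\ref{prop.inertiadecomp}, now feeding it the three facts already established for the multiple inertia: that each $\Imult{l}(\Phi)$ is closed, that distinct diagonal conjugacy classes are disjoint, and that only finitely many $\Imult{l}(\Phi)$ are non-empty (Lemma~\ref{lem.finite2}). First I would record disjointness: since distinct orbits $\Phi$ of the diagonal conjugation action on $G^l$ are disjoint, and $\Imult{l}(\Phi)$ is precisely the preimage of $\Phi$ under the projection $\Imult{l}_G(X) \to G^l$, the sets $\Imult{l}(\Phi)$ are pairwise disjoint. Each is closed by the preceding lemma.

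Next I would verify that the $\Imult{l}(\Phi)$ cover every closed point. Given a closed point $(m_1,\ldots,m_l,x) \in \Imult{l}_G(X)$, Lemma~\ref{lem.finitegroup} (whose proof applies verbatim with $\II_G(X)$ replaced by $\Imult{l}_G(X)$) guarantees that $H = \langle m_1,\ldots,m_l\rangle$ is finite. Hence each $m_i$ has finite order, is therefore semi-simple, and $\Phi(m_1,\ldots,m_l)$ is a genuine closed diagonal conjugacy class of the type indexing the pieces. Thus the point lies in the non-empty piece $\Imult{l}(\Phi(m_1,\ldots,m_l))$. I regard this as the one step that deserves care, since it is where quasi-freeness is really used: it ensures that no closed point escapes into a ``class of infinite-order elements'' that would fail to be closed.

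Finally I would combine finiteness with the covering. By Lemma~\ref{lem.finite2} the union $\bigcup_\Phi \Imult{l}(\Phi)$ is a finite union of closed sets, hence closed, so its complement is Zariski open, and by the covering step it contains no closed points. Over an algebraically closed field a non-empty open subscheme must contain a closed point, so the complement is empty and $\Imult{l}_G(X) = \coprod_\Phi \Imult{l}(\Phi)$. Because this is a disjoint union of finitely many closed subsets, each piece equals the complement of the finite union of the others and is therefore open; each $\Imult{l}(\Phi)$ is thus clopen and is a union of connected components, which gives the ``in particular'' assertion. No genuine obstacle arises beyond invoking the already-proved lemmas in the correct order.
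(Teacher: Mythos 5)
Your proof is correct and takes essentially the same route as the paper: the paper itself says only that ``the same argument used in the proof of Proposition~\ref{prop.inertiadecomp} yields'' the result, and your write-up is precisely that argument, invoking the closedness lemma, Lemma~\ref{lem.finite2}, and Lemma~\ref{lem.finitegroup} in the same roles. You merely spell out details the paper leaves implicit (that quasi-freeness makes every occupied class a closed, finite-order class, and the clopen argument for the ``in particular'' clause), which is fine.
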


\subsection{Equivariant Chow groups}
Equivariant Chow groups were defined in \cite{EdGr:98} for actions of
linear algebraic groups on arbitrary algebraic spaces over a field.
They are algebraic analogues of equivariant cohomology groups and the
formalism of this paper also goes through for equivariant
cohomology. If $G$ is an 
algebraic group and $X$ is a $G$-space, then
in this paper we use the notation $A^*_G(X)$ to denote the infinite
direct sum 
$\bigoplus_{i=0}^\infty A^i_G(X)$, where $A^i_G(X)$ is the
``codimension-$i$'' equivariant Chow group. An element of $A^i_G(X)$ is
represented by a codimension-$i$ cycle on a quotient $X \times_G U$,
where $U$ is an open set in a representation on which $G$ acts freely
and such that the complement of $U$ has codimension more than $i$ in
$V$. The space $X \times_G U$ may be viewed as an approximation of the
Borel construction in equivariant cohomology. 

\begin{remark} \label{rem.needspace} Even if $X$ is a scheme,
the quotient $X \times_G U$ may exist only in the category of
algebraic spaces. For this reason, the natural category for
equivariant intersection theory is that of algebraic spaces of finite
type over a field. By their definition, the basic properties of
equivariant Chow groups follow from the corresponding properties of
ordinary Chow groups of algebraic spaces.  As discussed in
\cite[Section 6]{EdGr:98}, the definition of Chow groups of schemes
given in \cite{Ful:84} extends to algebraic spaces, and the results of
\cite[Chapters 1-6]{Ful:84} can be carried over essentially unchanged.
\end{remark}

Since representations may have arbitrarily large dimension, the groups
$A^i_G(X)$ can be non-zero in arbitrarily high degree. If $G$ acts
freely on $X$,
then $A^*_G(X) = A^*(X/G)$, where $X/G$ is
the quotient in the category of algebraic spaces (which always
exists).  If $G$ acts 
quasi-freely, then $A^i_G(X) \otimes
\Q = 0$ for $i > \dim X$, and $A^i_G(X)\otimes \Q = A^i(X/G) \otimes
\Q$ when the quotient exists in the category of algebraic spaces
\cite[Theorem 3]{EdGr:98}. More generally, \cite[Proposition
  19]{EdGr:98} states that $A^*_G(X)$ may be identified with the Chow 
  groups of the quotient stack $[X/G]$. In particular, they depend only
on the underlying quotient stack and not on the group $G$ and space
$X$.

\begin{remark}
In \cite{EdGr:98}, the notation $A^i_G(X)$ was used for the
``codimension-$i$'' operational Chow group, rather than the
codimension-$i$ group of cycles. However, if $X$ is smooth, then these two
groups are identified. In this paper we work exclusively with
smooth spaces so the notational difference is immaterial.
\end{remark}

\begin{remark}
In this paper we will often consider equivariant Chow groups of smooth but disconnected spaces. If $X  = \coprod_{k=1}^m X_k$, then $A^i_G(X) = \oplus_{i=1}^k
A^i_G(X)$ so any ``codimension-$i$'' cycle is a sum of ``codimension-$i$''
cycles on the each connected component $X_k$.
\end{remark}

Equivariant Chow groups enjoy the same formal properties as
ordinary Chow groups. In particular, for $X$ smooth there is an
intersection product which makes $A^*_G(X)$ a graded, commutative
ring. If $f \colon Y \to X$ is a morphism of smooth varieties, then
there is a pullback $f^* \colon A^*_G(X) \to A^*_G(Y)$ which is a ring
homomorphism. If $f$ is proper, then there is a push-forward $f_* \colon
A^*_G(Y) \to A^*_G(X)$ which shifts degrees by the relative
codimension of the morphism $f$. 

If $G$ acts properly on $X$, then there is a pushforward isomorphism
\cite[Theorem 3]{EdGr:98} $p_*\colon A^*_G(X)\otimes \Q \to
A^*(X/G)\otimes \Q$, where $X/G$ is the geometric quotient (which
always exists in the category of algebraic spaces by \cite{KeMo:97}). Let $pr\colon X/G
\to \Spec \C$ be the projection to a point.
\begin{defi} \label{def:quotientdegree} If $G$ acts properly on $X$
and the quotient $X/G$ is complete, then we define the {\em quotient degree}
map $\int_{[X/G]} \colon A^*_G(X)\otimes \Q \to \Q=A^*(\Spec \C)$ by the
formula
$$\int_{[X/G]} \alpha 
:= pr_* p_*\alpha.$$
\end{defi}
\begin{remark}
The stack $[X/G]$ is complete if and only if $G$ acts properly and the quotient $X/G$ is a complete algebraic space.
\end{remark}
\begin{remark} \label{rem.quotientdegree}
The pushforward $A^*_G(X) \otimes \Q \to A_*(X/G)$ commutes with equivariant pushforward for finite
morphisms. This implies two facts about the quotient degree which we will use below:

(i) If $Y \stackrel{f} \to X$ is a finite $G$-equivariant 
morphism
such that $[X/G]$ (and hence $[Y/G]$) 
is complete,
and if 
$\alpha \in A^*_G(Y)\otimes \Q$, 
then
$$\int_{[Y/G]} \alpha = \int_{[X/G]} f_*\alpha.$$

(ii) If $ \sigma \colon X \to X$ is an automorphism that commutes with the $G$-action, then
$$\int_{[X/G]} \sigma^*\alpha = \int_{[X/G]} \alpha$$
for all classes $\alpha \in A^*_G(X)\otimes \Q$.
\end{remark}

Equivariant vector bundles have equivariant Chern classes with values
in the equivariant Chow ring. Equivariant Chern classes have the same
formal properties as ordinary Chern classes.  The only difference is
that, because $A^i_G(X)$ may be non-zero in arbitrarily high degree,
the Chern character and Todd classes must, a priori, be viewed as
elements in the formal completion $\prod_{i =0}^{\infty}
A^i_G(X)\otimes \Q$. However, in this paper we only consider quasi-free 
actions
so the formal completion is the same as
$A^*_G(X) \otimes \Q$.  

We will often consider
$G$-equivariant vector bundles on non-connected algebraic spaces
whose rank varies on the connected components. 
\begin{defi}\label{def:euler}
If $V$ is such a bundle
on a space $X$, then we use the notation 
$\euler(V)$
for the Euler class of $V$, that is the class
in $A^*_G(X)$ whose restriction to equivariant the Chow group of each
connected component is the top Chern class of the restriction of $V$
to that component.
\end{defi}

If $Z \subset G$ is a closed subgroup and $X$ is a $Z$-space, then we
write $G \times_Z X$ for the quotient of the $(G \times Z)$-space $G
\times X$ by the subgroup $1 \times Z$, where $G \times Z$ acts by the
rule $(k,z)\cdot (g,x) = (kgz^{-1}, zx)$. The quotient has an action
of $G$ and we may identify $A^*_Z(X)$ with $A^*_G(G \times_Z X)$
(\cite[Proposition 3.2a]{EdGr:00}).  We refer to this identification as
\emph{Morita equivalence}. If $X$ is also a $G$-space, then $G \times_Z X =
G/Z \times X$ and flat pullback along the morphism $G/Z \times X \to
X$ induces a restriction morphism $A^*_G(X) \to 
A^*_Z(X)$.

\subsection{Equivariant $K$-theory}
Let $G$ be an algebraic group acting on an algebraic space $X$. We use
the notation $K_G(X)$ to denote the Grothendieck ring of
$G$-equivariant vector bundles on $X$.  An element $K_G(X)$ is {\em
  positive} if it is equivalent to a positive integral sum of classes
of equivariant vector bundles. An element is \emph{non-negative} if it
is either 0 or positive. If $\alpha$ is a non-negative, then its Euler
class is well defined, as is the corresponding $K$-theory class
$\lambda_{-1}(\alpha^*)$.

Given a morphism $f \colon Y
\to X$ of $G$-spaces, there is a naturally defined pullback $f^* \colon
K_G(X) \to K_G(Y)$. In order to construct the twisted product on
equivariant $K$-theory we need the existence of pushforwards for
finite local complete intersection morphisms of smooth spaces.  A
sufficient condition for this to hold is if $Y$ and $X$ satisfy the
equivariant resolution property---that is every $G$-coherent sheaf is
the quotient of a $G$-equivariant locally free sheaf \cite[Section
3]{Koc:98}. By Thomason's resolution theorem \cite{Tho:87a}, the
equivariant resolution property holds if $X$ satisfies the
non-equivariant resolution property. The resolution property is known
to hold for smooth schemes, but no general result exists for algebraic
spaces. 

In order to prove associativity of the orbifold product, we
need to use the equivariant self intersection formula for finite local
complete intersection morphisms.  This follows from the excess intersection
formula for $G$-projective morphisms proved by K\"ock
\cite[Theorem 3.8]{Koc:98} for schemes which satisfy the resolution property.
Consequently, when we work in
equivariant $K$-theory we will assume that
we work with smooth schemes rather than smooth algebraic spaces.

Suppose that $G$ acts properly on a scheme $X$ with geometric quotient $X/G$ 
(which need not be a scheme).
Let $\pi \colon X \to X/G$ be the quotient map. By \cite[Lemma 6.2]{EdGr:08}
the assignment ${\mathcal E} \to (\pi_*{\mathcal E})^G$ is an exact functor from the category $G$-equivariant vector bundles on $X$ to the category of coherent sheaves on $X/G$.
\begin{defi} \label{def.quotienteulerchar}
If $G$ acts properly on $X$ and the quotient $X/G$ is complete, then we define the 
{\em  quotient Euler characteristic} map
$\chi_{[X/G]} \colon K_G(X) \to \Z$ by the formula
$$\chi_{[X/G]}({\mathcal E}) = \sum_{i} (-1)^i \dim H^i(X/G, (\pi_*{\mathcal E})^G).$$
\end{defi}
\begin{remark} \label{rem.quotientchi}
As is the case for the quotient degree, the quotient Euler characteristic commutes with finite equivariant morphisms and is invariant under automorphisms which commute with the action of $G$.
\end{remark}

If $Z \subset G$ is a closed subgroup and $X$ is a $G$-space, then we
again have a Morita equivalence identification of $K_Z(X) = K_G(G
\times_Z X)$ as described in \cite[Proposition 3.2(a)]{EdGr:00}.  When $X$ is also a
$G$-space, then $G \times_Z X = G/Z \times X$ and the pullback along
the $G$-equivariant morphism $G/Z \times X \to X$ corresponds to the
restriction map $K_G(X) \to K_Z(X)$.

As explained in \cite[Section 3.2]{EdGr:05}, the Morita equivalence
identification of equivariant $K$-theory follows from an explicit
equivalence between the category of $Z$-locally free sheaves on $X$ and
$G$-locally free sheaves on $G \times_Z X$.  If $V$ is $G$-module on
$G \times_Z X$, then its pullback to $G \times X$ is a $(G\times
Z)$-module on $G \times X$. The subsheaf, ${\mathscr V}$ of $G \times
1$-invariant sections is a $Z$-module on $X$.

\subsection{Fixed loci, conjugacy classes and Morita equivalence}
Let $G$ be an algebraic group acting on an algebraic space $X$.
Consider a diagonal conjugacy class $\Phi$ in $\G{l}$. 
Given ${\bm} = (m_1, \ldots , m_l) \in \Phi$,
let $X^{\bm}$ be the intersection
of the fixed loci 
$X^{m_1} \cap \ldots\cap X^{m_l}$. 
Define a map $G \times X^{\bm} \to \Imult{l}(\Phi)$ by 
$(g,x) \mapsto  (gm_1g^{-1}, \ldots , g m_l g^{-1},gx)$.

\begin{lemma} \label{lem.sphi} (cf. \cite[Lemma 4.3]{EdGr:05})
The map $G \times X^{\bm} \to \Imult{l}(\Phi)$ is a $Z_{\bm}$-torsor,
where
$Z_{\bm}$ acts by $z\cdot(g,x) = (gz^{-1}, zx)$. In particular,
$\Imult{l}(\Phi)$ is smooth if $X$ is smooth.
\end{lemma}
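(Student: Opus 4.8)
The plan is to realize the map
$\phi\colon G\times X^{\bm}\to \Imult{l}(\Phi)$, $(g,x)\mapsto(gm_1g^{-1},\ldots,gm_lg^{-1},gx)$, as the quotient map of a fiber bundle associated to the standard orbit torsor for the diagonal conjugation action, generalizing the single-inertia case \cite[Lemma 4.3]{EdGr:05}. First I would record the routine compatibilities. The $Z_{\bm}$-action preserves $G\times X^{\bm}$: for $z\in Z_{\bm}=\bigcap_i Z_G(m_i)$ and $x\in X^{\bm}$ we have $m_i(zx)=z(m_ix)=zx$, so $zx\in X^{\bm}$. The map $\phi$ lands in $\Imult{l}(\Phi)$, since $g\cdot\bm\in\Phi$ and $(gm_ig^{-1})(gx)=gm_ix=gx$. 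And $\phi$ is $Z_{\bm}$-invariant, because $(gz^{-1})m_i(gz^{-1})^{-1}=gm_ig^{-1}$ and $(gz^{-1})(zx)=gx$.

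Next I would verify the set-theoretic torsor properties. The action is free, as $gz^{-1}=g$ forces $z=1$. The map $\phi$ is surjective: given $(g_1,\ldots,g_l,x)\in\Imult{l}(\Phi)$, choose $g$ with $g_i=gm_ig^{-1}$ (possible since $(g_1,\ldots,g_l)\in\Phi$); then $g_ix=x$ gives $m_i(g^{-1}x)=g^{-1}x$, so $g^{-1}x\in X^{\bm}$ and $\phi(g,g^{-1}x)=(g_1,\ldots,g_l,x)$. Finally the fibers of $\phi$ are exactly the $Z_{\bm}$-orbits: if $\phi(g,x)=\phi(g',x')$ then $z:=(g')^{-1}g$ commutes with every $m_i$, hence $z\in Z_{\bm}$, and $gx=g'x'$ gives $x'=zx$, so $(g',x')=(gz^{-1},zx)$. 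The assignment $((g,x),(g',x'))\mapsto (g')^{-1}g$ is a morphism into $Z_{\bm}$ inverse to the map $Z_{\bm}\times(G\times X^{\bm})\to (G\times X^{\bm})\times_{\Imult{l}(\Phi)}(G\times X^{\bm})$, $(z,(g,x))\mapsto((g,x),(gz^{-1},zx))$; this is the algebraic torsor criterion, modulo flatness.

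The geometric content, and the step I expect to be the main obstacle, is upgrading this bijection of sets to a genuine fppf (indeed smooth) $Z_{\bm}$-torsor. Here I would use that in characteristic $0$ the conjugation orbit map $\pi\colon G\to\Phi$, $g\mapsto g\cdot\bm$, identifies $\Phi$ with $G/Z_{\bm}$ and is a $Z_{\bm}$-torsor. Both $\phi$ and the quotient $q\colon G\times X^{\bm}\to G\times_{Z_{\bm}}X^{\bm}$ are $Z_{\bm}$-invariant and lie over $\Phi$ (via $\pi\circ\mathrm{pr}_G$ and via forgetting $x$, respectively), so invariance of $\phi$ produces a factorization $\phi=\bar\phi\circ q$ with $\bar\phi\colon G\times_{Z_{\bm}}X^{\bm}\to\Imult{l}(\Phi)$ a morphism over $\Phi$. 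Working étale-locally on $\Phi$ I would trivialize $\pi$, so that $G\times_{Z_{\bm}}X^{\bm}$ becomes $U\times X^{\bm}$; the fiber of $\Imult{l}(\Phi)\to\Phi$ over $\bm$ is exactly $\{(\bm,x):x\in X^{\bm}\}\cong X^{\bm}$, and $\bar\phi$ restricts to the identity there. By $G$-equivariance and homogeneity of $\Phi$ the same holds over every point, so $\bar\phi$ is a fiberwise isomorphism of bundles over $\Phi$, hence an isomorphism. This identifies $\phi$ with the torsor quotient $q$, so $\phi$ is a $Z_{\bm}$-torsor; the delicate point throughout is that the natural reduced structure on $\Imult{l}(\Phi)\subseteq G^l\times X$ matches the associated bundle, which the fiberwise comparison resolves without circularity.

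For the smoothness assertion I would argue as follows. We may assume $\Imult{l}(\Phi)\neq\emptyset$ and fix a point $(m_1,\ldots,m_l,x)$ in it; by Lemma~\ref{lem.finitegroup} the group $H=\langle m_1,\ldots,m_l\rangle$ is finite, and $X^{\bm}=X^{H}$. Since $X$ is smooth and we work in characteristic $0$, the fixed locus $X^{H}$ of the finite group $H$ is smooth, so $G\times X^{\bm}$ is smooth. As $\phi$ is a $Z_{\bm}$-torsor it is smooth and surjective, and smoothness descends along smooth surjections; therefore $\Imult{l}(\Phi)$ is smooth.
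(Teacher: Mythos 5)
Your argument is correct in substance, and it is worth noting at the outset that the paper offers no proof of this lemma at all: it simply points to the $l=1$ case, \cite[Lemma 4.3]{EdGr:05}. So your proposal is a genuine filling-in rather than a variant of an argument in the text. The skeleton is sound: the equivariance checks, the set-theoretic torsor properties, the scheme-theoretic isomorphism $Z_{\bm}\times(G\times X^{\bm})\cong (G\times X^{\bm})\times_{\Imult{l}(\Phi)}(G\times X^{\bm})$ (your inverse really does factor through $Z_{\bm}$ scheme-theoretically, since the fiber product imposes the equations $gm_ig^{-1}=g'm_i(g')^{-1}$ universally), the factorization $\phi=\bar\phi\circ q$ through the associated bundle, and descent of smoothness along the torsor. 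One step should be tightened: ``fiberwise isomorphism of bundles over $\Phi$, hence an isomorphism'' is not a valid inference for an arbitrary morphism over a base. It is valid here because the source $G\times_{Z_{\bm}}X^{\bm}$ is flat over $\Phi$, so the fiberwise criterion for flatness makes $\bar\phi$ flat, and a flat, surjective, universally injective, unramified morphism of finite presentation is an isomorphism; but you should say so, or better, avoid the criterion entirely by exhibiting the inverse: \'etale-locally on $\Phi\cong G/Z_{\bm}$ choose a section $s$ of $G\to\Phi$ and send $(\mathbf{u},y)\mapsto[s(\mathbf{u}),s(\mathbf{u})^{-1}y]$; equivalently, pull everything back along the faithfully flat $G\to\Phi$, where $\bar\phi$ becomes the identity on $G\times X^{\bm}$, and conclude by descent.

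The second caveat concerns smoothness. Your appeal to Lemma~\ref{lem.finitegroup} quietly imports the hypothesis that the action is quasi-free, which the section containing the lemma does not state. Some such hypothesis is genuinely necessary, so this is a gap in the statement rather than in your argument: for a unipotent element the claim fails, e.g.\ $\mathbb{G}_a$ acting on $\A^2$ by $t\cdot(x,y)=(x+ty^2,y)$ has fixed-point scheme $V(y^2)$ at $t=1$, which is not smooth. But be aware that the paper also uses this lemma in Section~\ref{subsec.twistpullback}, where the action is arbitrary and the $m_i$ are only assumed to lie in a compact subgroup; there your finiteness argument does not apply, and the substitute is Lemma~\ref{lem.zarclosure}: the Zariski closure $H$ of $\langle m_1,\ldots,m_l\rangle$ is linearly reductive, $X^{\bm}=X^{H}$, and in characteristic $0$ the fixed locus of a linearly reductive group acting on a smooth space is smooth. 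With that one replacement, your proof covers every use the paper makes of the lemma.
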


As a consequence we obtain the following decompositions of $A^*(\IX)$
(resp.
$K(\IX)$) and $A^*(\IIX)$ (resp. $K(\IIX)$).
\begin{prop} \label{prop.chowdecomp}
$$A^*(\IX) = \bigoplus_{\Psi} A^*_{Z_G(m)}(X^m) $$
$$K(\IX)  = \bigoplus_{\Psi} K_{Z_G(m)}(X^m),$$
where the sum is over every conjugacy class $\Psi$ of $G$ such 
that $I(\Psi)\neq \emptyset$, and $m$ is a choice of representative for each $\Psi$.

Likewise
$$A^*(\IIX) = \bigoplus_{\Phi}
A^*_{Z_G(m_1,m_2)}(X^{m_1,m_2})$$
$$K(\IIX) = \bigoplus_{\Phi}
K_{Z_G(m_1,m_2)}(X^{m_1,m_2}),$$
where the sum is over all diagonal conjugacy classes $\Phi$ in $\G{2}$ such that $\Itwo(\Phi) \neq \emptyset$, and where $(m_1,m_2)$ is a choice of a representative for each $\Psi$.
\end{prop}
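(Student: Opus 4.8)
The plan is to assemble the statement from three ingredients already established in the excerpt: the identification of stack invariants with equivariant ones, the decomposition of the inertia schemes into open-and-closed pieces, and Morita equivalence. I begin with the first decomposition. By the identification of Chow groups of a quotient stack with equivariant Chow groups (\cite[Proposition 19]{EdGr:98}), we have $A^*(\IX) = A^*_G(I_G(X))$, and likewise $K(\IX) = K_G(I_G(X))$ for the Grothendieck ring. Proposition~\ref{prop.inertiadecomp} tells us that when $G$ acts quasi-freely the space $I_G(X)$ is the disjoint union of the finitely many non-empty $I(\Psi)$, each of which is a union of connected components. Since both $A^*_G$ and $K_G$ are additive over disjoint unions of open-and-closed subspaces (as recorded in the remark on disconnected spaces), this yields $A^*_G(I_G(X)) = \bigoplus_\Psi A^*_G(I(\Psi))$ and $K_G(I_G(X)) = \bigoplus_\Psi K_G(I(\Psi))$, the sums being finite.

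The heart of the argument is to evaluate each summand $A^*_G(I(\Psi))$ and $K_G(I(\Psi))$ via Morita equivalence. Fixing a representative $m \in \Psi$, Lemma~\ref{lem.sphi} (applied with $l = 1$) says that the map $G \times X^m \to I(\Psi)$ is a $Z_G(m)$-torsor, so that $I(\Psi) \cong G \times_{Z_G(m)} X^m$ as a $G$-space. The Morita equivalence established in the equivariant Chow section then identifies $A^*_{Z_G(m)}(X^m)$ with $A^*_G(G \times_{Z_G(m)} X^m) = A^*_G(I(\Psi))$, and the $K$-theoretic Morita equivalence from \cite[Proposition 3.2(a)]{EdGr:00} gives the analogous identification $K_{Z_G(m)}(X^m) = K_G(I(\Psi))$. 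Substituting these into the direct-sum decompositions of the previous paragraph produces exactly the first pair of formulas in the proposition. The independence of the choice of representative $m$ is automatic, since different choices are conjugate and the conjugation isomorphism intertwines the two Morita identifications.

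For the double inertia $\IIX = [\Imult{2}_G(X)/G]$ the argument is formally identical, now using the length-two versions of the inputs: Proposition~\ref{prop.inertiadecomp2} (with $l = 2$) decomposes $\Imult{2}_G(X)$ into the finitely many non-empty $\Itwo(\Phi)$ indexed by diagonal conjugacy classes $\Phi \in \G{2}$, and Lemma~\ref{lem.sphi} (with $l = 2$) exhibits $\Itwo(\Phi) \cong G \times_{Z_G(m_1,m_2)} X^{m_1,m_2}$ for a chosen representative $(m_1,m_2) \in \Phi$. Applying additivity over the finite disjoint union and then Morita equivalence summand by summand gives the decompositions of $A^*(\IIX)$ and $K(\IIX)$.

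I do not expect a genuine obstacle here, as the proposition is an assembly of results proved earlier; the only points requiring care are bookkeeping ones. First, one must confirm that the $I(\Psi)$ (resp. $\Itwo(\Phi)$) are truly open-and-closed, so that additivity yields a direct \emph{sum} rather than only a spectral-sequence relationship; this is exactly the content of Propositions~\ref{prop.inertiadecomp} and~\ref{prop.inertiadecomp2}, and the finiteness there removes any sum-versus-product subtlety. Second, in the $K$-theory statements one should keep in mind the standing hypothesis that $X$ is a smooth scheme, so that the resolution property holds and the Morita equivalence of \cite{EdGr:00} is available on each fixed locus $X^m$ (resp. $X^{m_1,m_2}$), which remains smooth by the last clause of Lemma~\ref{lem.sphi}.
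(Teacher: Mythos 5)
Your proof is correct and follows exactly the route the paper intends: the paper offers no written proof, presenting the proposition as an immediate consequence of Lemma~\ref{lem.sphi} together with the decompositions of Propositions~\ref{prop.inertiadecomp} and~\ref{prop.inertiadecomp2} and the Morita equivalence identifications $A^*_{Z}(Y) = A^*_G(G\times_Z Y)$ and $K_Z(Y) = K_G(G\times_Z Y)$, which is precisely the assembly you carried out. Your explicit attention to the open-and-closed nature of the pieces and to independence of the chosen representative fills in the bookkeeping the paper leaves implicit.
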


\begin{defi} 
We define \emph{multiplication maps} of the form $\mui \colon G^l \to G^{l-1}$ 
defined by $ (g_1, \ldots , g_l) \mapsto (g_1, \ldots , g_{i-1}, g_i g_{i +1},
g_{i +2}, \ldots g_l)$, where $i\in\{1,\ldots,l-1\}$. 

We also define  \emph{evaluation maps} of the form $\ej \colon G^l \to G^{l-1}$ defined by $ (g_1, \ldots , g_l) \mapsto (g_1, \ldots g_{j-1}, g_{j+1}, \ldots g_l)$, where $j\in\{1,\ldots,l\}$. 

Since these maps commute with the diagonal  action of $G$ on $G^l$ and $G^{l-1}$, they induce maps, also denoted by the same symbols,  $\mui \colon \Imult{l}(\Phi) \to \Imult{l-1}(\mu(\Phi))$ and $\ej \colon \Imult{l}(\Phi) \to \Imult{l-1}(\ej(\Phi))$.
\end{defi}

Now let $V$ be coherent $G$-module on $\Imult{l-1}(\mui(\Phi))$
(resp. $\Imult{l-1}(\ej(\Phi))$). Then $V$ pulls back to a coherent $G$-module 
$W = \mui^*V$ (resp. 
$\ej^*V$
) on $\Imult{l}(\Phi)$. Given 
${\bm} =(m_1, \ldots m_l) \in \Phi$, let ${\mathscr W}$ be the Morita
equivalent 
$Z_{\bm}$-module on $X^{\bm}$. Likewise let ${\mathscr V}$ be the
$Z_{\mui({\bm})}$ (resp. $Z_{\ej({\bm})}$)-module on
$X^{\mui({\bm})}$ (resp. $X^{\ej({\bm})}$) Morita
equivalent to $V$. 
There is an inclusion of centralizers $Z_{\mathbf
  m} \subset Z_{\mui({\bm})}$ (resp. $Z_{\bm} \subset
Z_{\ej({\bm})}$) as subgroups of $G$. Let ${\mathscr V}|_{X^{\mathbf
    m}}$ be the $Z_{\bm}$-module on $X^{\bm}$ obtained by
first restricting the action to a $Z_{\bm}$-action and then
pulling back via the natural inclusion $X^{\bm}
\hookrightarrow X^{\mui({\bm})}$ (resp. $X^{\bm}
\hookrightarrow X^{\ej({\bm})}$).
\begin{lemma} \label{lem.morequivincl}
With the notation as above ${\mathscr W} = {\mathscr V}|_{X^{\bm}}$.
\end{lemma}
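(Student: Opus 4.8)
The plan is to reduce the statement to the functoriality of pullback of coherent sheaves, once the Morita equivalence is realized concretely as a pullback map. First I would record that, by Lemma~\ref{lem.sphi}, the torsor map $G \times X^{\bm} \to \Imult{l}(\Phi)$ restricts along the slice $x \mapsto (e,x)$ to a $Z_{\bm}$-equivariant map
\[
j \colon X^{\bm} \to \Imult{l}(\Phi), \qquad x \mapsto (m_1, \ldots, m_l, x),
\]
and that the Morita-equivalent $Z_{\bm}$-module $\mathscr W$ of a $G$-module $W$ on $\Imult{l}(\Phi)$ is naturally $j^* W$. Indeed, pulling $W$ back to $G \times X^{\bm}$ and taking $G \times 1$-invariant sections gives a module whose restriction to $\{e\} \times X^{\bm}$ is an isomorphism onto $j^* W$, since the $G \times 1$-action is free and transitive in the torsor direction; the residual $Z_{\bm}$-structure is exactly the one transported by the $Z_{\bm}$-equivariance of $j$. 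This matches the description of Morita equivalence recalled before the statement, with $\Imult{l}(\Phi) = G \times_{Z_{\bm}} X^{\bm}$.

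Next I would set up the corresponding slice map for the target. Writing $j' \colon X^{\ej(\bm)} \to \Imult{l-1}(\ej(\Phi))$, $x \mapsto (\ej(\bm), x)$ (resp. $j' \colon X^{\mui(\bm)} \to \Imult{l-1}(\mui(\Phi))$, $x \mapsto (\mui(\bm), x)$), the identical argument identifies $\mathscr V$ with $j'^* V$. Let $\iota \colon X^{\bm} \hookrightarrow X^{\ej(\bm)}$ (resp. $\iota \colon X^{\bm} \hookrightarrow X^{\mui(\bm)}$) be the inclusion of fixed loci, which is $Z_{\bm}$-equivariant for the restricted action coming from the inclusion $Z_{\bm} \subset Z_{\ej(\bm)}$ (resp. $Z_{\bm} \subset Z_{\mui(\bm)}$). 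By definition $\mathscr V|_{X^{\bm}} = \iota^* \mathscr V = \iota^* j'^* V$.

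The crux is then the identity of $Z_{\bm}$-equivariant maps
\[
\ej \circ j = j' \circ \iota \colon X^{\bm} \to \Imult{l-1}(\ej(\Phi))
\]
(resp. $\mui \circ j = j' \circ \iota$), which is immediate on points: both send $x$ to the tuple $(\ej(\bm), x)$ (resp. $(\mui(\bm), x)$), using that deleting the $j$-th entry (resp. multiplying the $i$-th and $(i{+}1)$-st entries) of $\bm$ yields $\ej(\bm)$ (resp. $\mui(\bm)$). Granting this, functoriality of pullback gives
\[
\mathscr W = j^* W = j^* \ej^* V = (\ej \circ j)^* V = (j' \circ \iota)^* V = \iota^* j'^* V = \mathscr V|_{X^{\bm}},
\]
and identically with $\mui$ in place of $\ej$.

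I expect the only real work to be the bookkeeping of the group actions: verifying that $j$, $j'$, $\iota$, $\ej$ and $\mui$ are all equivariant for the relevant subgroups $Z_{\bm} \subset Z_{\ej(\bm)}, Z_{\mui(\bm)}$, so that the displayed chain of pullbacks is an equality of $Z_{\bm}$-modules and not merely of underlying sheaves. The geometry itself is transparent: everything follows from the single commuting square relating the two slice maps $j$, $j'$ to the evaluation (or multiplication) map and the fixed-locus inclusion.
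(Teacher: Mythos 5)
Your proof is correct and is essentially the paper's own argument: the paper deduces the lemma from the commutativity of the diagram of torsors $G\times X^{\bm}\hookrightarrow G\times X^{\mui(\bm)}$ lying over $\mui\colon \Imult{l}(\Phi)\to \Imult{l-1}(\mui(\Phi))$ together with the $G\times Z_{\bm}$-equivariance of the top arrow, and your commuting square $\mui\circ j = j'\circ\iota$ (resp.\ $\ej\circ j = j'\circ\iota$) is precisely the restriction of that diagram to the identity slices. The only cosmetic difference is that you realize the Morita equivalence as pullback along the $Z_{\bm}$-equivariant slice $j$ rather than as $G\times 1$-invariant sections of the pullback to the torsor; these are naturally isomorphic descriptions, as your first paragraph correctly verifies.
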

\begin{proof}
We only prove the statement for the multiplication map $\mui$,  
as the
proof for the evaluation map is essentially identical.
The proof follows from the fact that the diagram of torsors for the groups
$Z_{\bm}$ and $Z_{\mui({\bm})}$
$$\begin{array}{ccc}
G \times X^{\bm} & \hookrightarrow & G \times
X^{\mui({\bm})}\\
\downarrow & & \downarrow \\
I({\Phi})&  \stackrel{\mui}\to & I(\mu(\Phi))
\end{array}
$$ 
commutes and the upper horizontal arrow is $G \times Z_{\mathbf
  m}$-equivariant.
\end{proof}

Lemma \ref{lem.sphi} also yields the following useful proposition.
\begin{prop}\label{prop:IsomOfInertias}
If $\Phi$ in $\G{\ell+1}$ is the diagonal conjugacy class of $(m_1,\ldots,m_{\ell+1})$, where $\prod_{i=1}^{\ell+1} m_i = 1$, then $\esub{\ell+1}:I(\Phi)\to I(\esub{\ell+1}(\Phi))$ is a $G$-equivariant isomorphism which induces a $G$-equivariant embedding $\Imult{\ell}_G(X)\to \Imult{\ell+1}_G(X)$.
\end{prop}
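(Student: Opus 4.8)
The plan is to use the product-one hypothesis to show that forgetting the last group coordinate loses no information, and then to read off the isomorphism directly from the torsor presentations of Lemma~\ref{lem.sphi}. First I would note that diagonal conjugation preserves the relation $\prod_i g_i = 1$: for any $g \in G$ one has $\prod_{i=1}^{\ell+1} g m_i g^{-1} = g\bigl(\prod_{i=1}^{\ell+1} m_i\bigr)g^{-1} = 1$. Hence every tuple $\bg = (g_1,\dots,g_{\ell+1}) \in \Phi$ satisfies $g_{\ell+1} = (g_1\cdots g_\ell)^{-1}$; in particular $m_{\ell+1} = (m_1\cdots m_\ell)^{-1}$.

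The only substantive step is the resulting comparison of fixed loci and centralizers, which is exactly where the hypothesis is used. Because $m_{\ell+1} = (m_1\cdots m_\ell)^{-1}$, any point fixed by $m_1,\dots,m_\ell$ is fixed by $m_1\cdots m_\ell$ and therefore by $m_{\ell+1}$, and any element commuting with $m_1,\dots,m_\ell$ commutes with $m_1\cdots m_\ell$ and hence with $m_{\ell+1}$; consequently
$$X^{\bm} = X^{m_1}\cap\cdots\cap X^{m_\ell} = X^{\esub{\ell+1}(\bm)}, \qquad Z_{\bm} = Z_{\esub{\ell+1}(\bm)}.$$
Applying Lemma~\ref{lem.sphi} to $\Phi$ and to $\esub{\ell+1}(\Phi)$, the two torsors $G\times X^{\bm}\to \Imult{\ell+1}(\Phi)$ and $G\times X^{\esub{\ell+1}(\bm)}\to \Imult{\ell}(\esub{\ell+1}(\Phi))$ then have the same total space and the same structure group $Z_{\bm}$, and they are compatible with $\esub{\ell+1}$: applying $\esub{\ell+1}$ to $(gm_1g^{-1},\dots,gm_{\ell+1}g^{-1},gx)$ gives $(gm_1g^{-1},\dots,gm_\ell g^{-1},gx)$, which is the image of $(g,x)$ under the second torsor map. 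Thus $\esub{\ell+1}$ is the morphism induced on quotients by the identity of $G\times X^{\bm}$, so it is an isomorphism, and it is $G$-equivariant because the $G$-action is carried by the $G$-factor of the torsor, which $\esub{\ell+1}$ leaves untouched.

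To produce the embedding I would write down the inverse globally as the section
$$s\colon \Imult{\ell}_G(X)\to \Imult{\ell+1}_G(X), \qquad (g_1,\dots,g_\ell,x)\mapsto \bigl(g_1,\dots,g_\ell,(g_1\cdots g_\ell)^{-1},x\bigr).$$
This indeed lands in $\Imult{\ell+1}_G(X)$, since $(g_1\cdots g_\ell)^{-1}$ fixes $x$ whenever each $g_i$ does, and it is $G$-equivariant because $(mg_1m^{-1}\cdots mg_\ell m^{-1})^{-1} = m(g_1\cdots g_\ell)^{-1}m^{-1}$. On each component $\Imult{\ell}(\Psi)$ of $\Imult{\ell}_G(X)$ (Proposition~\ref{prop.inertiadecomp2}), the map $s$ restricts to the inverse of the isomorphism of the previous paragraph, for $\Phi$ the unique product-one class lying over $\Psi$. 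Hence $s$ is an isomorphism onto the union $\coprod_\Phi \Imult{\ell+1}(\Phi)$ of product-one components, which is open and closed in $\Imult{\ell+1}_G(X)$ by Proposition~\ref{prop.inertiadecomp2}; this exhibits $s$ as the $G$-equivariant embedding induced by the componentwise isomorphisms. Everything beyond the fixed-locus and centralizer equality is a formal consequence of the torsor description, so that single comparison is the crux.
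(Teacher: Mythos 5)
Your proof is correct and takes essentially the same route as the paper, which gives no written argument beyond the remark that the proposition follows from Lemma~\ref{lem.sphi}: your equality of fixed loci $X^{\bm}=X^{\esub{\ell+1}(\bm)}$ and centralizers $Z_{\bm}=Z_{\esub{\ell+1}(\bm)}$, combined with the identification of the two $Z_{\bm}$-torsors on $G\times X^{\bm}$, is precisely how that deduction goes, and the global section $s$ packaging the componentwise inverses is the intended embedding. The only caveat is that your appeal to Proposition~\ref{prop.inertiadecomp2} (openness and closedness of the product-one components) implicitly assumes the action is quasi-free, which is exactly the setting in which the paper applies this proposition (Definition~\ref{def.twistshift}).
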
 

\section{Inertia group scheme products} \label{sec.inertialproducts}

{\bf Background hypotheses.} Throughout this section, we assume that
$X$ is a smooth algebraic space when working with equivariant Chow
groups and a smooth scheme when working with equivariant $K$-theory.
If a group $G$ acts on a scheme or space $X$, then we assume that the
action has \emph{finite stabilizer}.\\

If we view $I_G(X)$ as a group scheme over $X$, there are 
three maps
$$\II_G(X) = I_G(X) \times_X I_G(X) \to I_G(X).$$ Let $e_1,e_2$
be the projections onto the first and second factors respectively and
let
$\mu$ be the multiplication map. 
Under the assumption that $G$ acts on $X$ with
finite stabilizer, all three of the above maps are finite.
\begin{defi}
 Given a
class 
$c \in A_G^*(\II_G(X))$, 
we may define a binary operation $\star_c$ on 
$A^*_G(I_G(X))$ by the formula
\begin{equation} \label{eq.cprod}
\alpha \star_c \beta = \mu_*(e_1^*\alpha \cdot e_2^*\beta
\cdot c).
\end{equation}
If we let $\X = [X/G]$, then identifying $A^*_G(I_G(X)) = A^*(\IX)$, the $\star_c$ product may be viewed
as a product on $A^*(\IX)$.
\end{defi}

When $X$ has the resolution property for coherent sheaves (e.g., if $X$
is a scheme), then, given a class 
$\kclass \in K_G(\II_G(X))$
we may define a product $\star_\kclass$ on $K_G(I_G(X))$ by the
formula
\begin{equation} \label{eq.eprod}
{\mathscr F} \star_\kclass {\mathscr G} = \mu_*(e_1^*{\mathscr F}
\otimes e_2^*{\mathscr G} \otimes \kclass)
\end{equation}
Again this product corresponds to a product on
$K(\IX)$.

\begin{defi} \label{def.metric}
Suppose that $[X/G]$ is complete (i.e., $G$ acts properly on $X$ and $X/G$ is complete).
 Let $\sigma:I_G(X) \to I_G(X)$ be the involution taking $(g,x)$ to $(g^{-1},x)$.  We define the 
{\em pairing}
$\eta \colon 
A^*_G(I_G(X))\otimes\Q \bigotimes A^*_G(I_G(X))\otimes\Q 
 \to \Q$ as 
 $$\eta(\alpha_1,\alpha_2) := \int_{[I_G(X)/G]} \alpha_1 \cdot \sigma^*\alpha_2$$
 for any classes $\alpha_1, \alpha_2 \in  
A^*_G(I_G(X))\otimes\Q
$.  
Similarly we define the 
{\em pairing}
$\eta \colon K_G(X) \otimes K_G(X) \to \Z$
as 
$$ \eta({\mathscr F}, {\mathscr G}) = \chi_{[I_G(X)/G]}({\mathscr F} \otimes \sigma^*{\mathscr G})$$
\end{defi}

\begin{remark}
Since $\sigma$ is an involution, observe that $\alpha_2 \cdot \sigma^*\alpha_1 = \sigma^*(\alpha_1 \cdot \sigma^*\alpha_2)$.
Hence by Remark \ref{rem.quotientdegree} 
$\eta(\alpha_1,\alpha_2) = \eta(\alpha_2,\alpha_1)$.
\end{remark}

\begin{defi} \label{def.frobproperty}
If $[X/G]$ is complete,
the $\star_c$ product 
on $A_G^*(I_G(X))\otimes\Q$ 
is {\em Frobenius} if 
$$\eta(\alpha_1 \star_c \alpha_2, \alpha_3) = \eta(\alpha_1, \alpha_2 \star_c \alpha_3)$$
for all classes $\alpha_1, \alpha_2, \alpha_3 \in 
A^*_G(I_G(X))\otimes \Q
$.
We define the Frobenius property of the $\star_\kclass$ product on $K_G(X)$ analogously.
\end{defi}
\begin{prop}\label{prop:frob}
A sufficient condition for the $\star_c$ product to be Frobenius is 
that the class $c\in A^*(\II_G(X))$ satisfy \emph{cyclic invariance}; that is, 
$$\tau^*(c)  = c,$$
where $\tau:\II_G(X) \to \II_G(X)$ is the map taking $(g_1,g_2,x)$ to $(g_2,(g_1g_2)^{-1}, x)$. 

The analogous statement holds for the $\star_{\mathscr E}$ product.
\end{prop}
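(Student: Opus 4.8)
The plan is to reduce both sides of the Frobenius identity to the integral of a single trilinear expression over the quotient $[\II_G(X)/G]$ and then to extract the needed symmetry from $\tau$. First I would expand the left-hand side: writing $\alpha_1 \star_c \alpha_2 = \mu_*(e_1^*\alpha_1 \cdot e_2^*\alpha_2 \cdot c)$ and unwinding $\eta$, the projection formula for the finite morphism $\mu$ together with Remark~\ref{rem.quotientdegree}(i) (applicable since $\mu$ is finite and $G$-equivariant and $[\II_G(X)/G] = \IIX$ is complete, being finite over the complete stack $[I_G(X)/G] = \IX$) gives
$$\eta(\alpha_1 \star_c \alpha_2, \alpha_3) = \int_{[\II_G(X)/G]} e_1^*\alpha_1 \cdot e_2^*\alpha_2 \cdot c \cdot \mu^*\sigma^*\alpha_3.$$
Performing the identical manipulation on the other side, after using the symmetry $\eta(\alpha_1,\beta) = \eta(\beta,\alpha_1)$ noted above with $\beta = \alpha_2 \star_c \alpha_3$, yields
$$\eta(\alpha_1, \alpha_2 \star_c \alpha_3) = \int_{[\II_G(X)/G]} e_1^*\alpha_2 \cdot e_2^*\alpha_3 \cdot c \cdot \mu^*\sigma^*\alpha_1.$$
Hence it suffices to show that the trilinear form $T(a,b,d) := \int_{[\II_G(X)/G]} e_1^*a \cdot e_2^*b \cdot c \cdot \mu^*\sigma^*d$ is invariant under the cyclic shift $(a,b,d) \mapsto (d,a,b)$, since then $T(\alpha_1,\alpha_2,\alpha_3) = T(\alpha_2,\alpha_3,\alpha_1)$, which is exactly the claimed identity.

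Next I would record the compatibilities of $\tau$ with the structure maps. A direct computation on points gives
$$e_1 \circ \tau = e_2, \qquad e_2 \circ \tau = \sigma \circ \mu, \qquad \mu \circ \tau = \sigma \circ e_1,$$
and hence, using $\sigma^2 = \mathrm{id}$, also $\sigma \circ \mu \circ \tau = e_1$. One also checks that $\tau$ is a $G$-equivariant automorphism of $\II_G(X)$ (in fact $\tau^3 = \mathrm{id}$), so Remark~\ref{rem.quotientdegree}(ii) applies: the quotient degree is invariant under $\tau^*$.

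Finally I would apply $\tau^*$ to the integrand of $T(a,b,d)$. As $\tau^*$ is a ring homomorphism, the displayed identities give $\tau^*(e_1^*a) = e_2^*a$, $\tau^*(e_2^*b) = \mu^*\sigma^*b$ and $\tau^*(\mu^*\sigma^*d) = e_1^*d$, while the cyclic-invariance hypothesis gives $\tau^*c = c$; multiplying these shows $\tau^*$ sends the integrand of $T(a,b,d)$ to that of $T(d,a,b)$, so Remark~\ref{rem.quotientdegree}(ii) forces $T(a,b,d) = T(d,a,b)$, establishing the cyclic invariance. The only genuine obstacle is the bookkeeping in this last step: getting the three composition identities for $\tau$ exactly right (especially the cancellation $\sigma^2 = \mathrm{id}$ that returns $\sigma \circ \mu \circ \tau$ to $e_1$) and tracking which $\alpha_i$ lands on which structure map. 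The $\star_{\mathscr E}$ statement follows by the same argument, replacing the intersection product by $\otimes$, the quotient degree $\int$ by the quotient Euler characteristic $\chi$, and Remark~\ref{rem.quotientdegree} by Remark~\ref{rem.quotientchi}.
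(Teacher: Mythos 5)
Your proof is correct and follows essentially the same route as the paper's: both expand $\eta(\alpha_1\star_c\alpha_2,\alpha_3)$ via the projection formula and the finiteness of $\mu$ into an integral over $[\II_G(X)/G]$, then use the composition identities relating $\tau$ to $e_1$, $e_2$, $\sigma\circ\mu$, the hypothesis $\tau^*c=c$, automorphism invariance of the quotient degree, and the symmetry of $\eta$. Your packaging of the computation as cyclic symmetry of a trilinear form $T$ is only a cosmetic reorganization of the paper's chain of equalities.
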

\begin{proof} 
This follows from the projection formula, the automorphism invariance of the quotient degree (Remark \ref{rem.quotientdegree}) and the following simple properties of $\tau$:
\begin{equation*}
\begin{array}{rlrl}
e_2 &= e_1\circ \tau & \qquad e_1 &= \sigma\circ \mu \circ \tau \\
 \qquad\sigma \circ \mu &= e_2\circ \tau.
\end{array}
\end{equation*}
We have
\begin{align*}
\eta(\alpha_1\star_c \alpha_2, \alpha_3) & =  \int_{[I_G(X)/G]} \mu_*(e_1^*\alpha_1 \cdot e_2^* \alpha_2 \cdot c)\cdot \sigma^*\alpha_3\\
& = \int_{[I_G(X)/G]} \mu_*(e_1^*\alpha_1 \cdot e_2^* \alpha_2 \cdot c\cdot \mu^*\sigma^* \alpha_3)\\
& = \int_{[\II_G(X)/G]}(\tau^*\mu^*\sigma^*\alpha_1 \cdot \tau^*e_1^* \alpha_2 \cdot \tau^*c \cdot \tau^*e_2^*\alpha_3)\\
&= \int_{[\II_G(X)/G]}(e_1^* \alpha_2 \cdot e_2^*\alpha_3\cdot c \cdot \mu^*\sigma^*\alpha_1)\\
&=  \int_{[I_G(X)/G]} \mu_*(e_1^* \alpha_2 \cdot e_2^*\alpha_3\cdot c) \cdot\sigma^*\alpha_1 
\\
&=\eta(\alpha_2 \star_c \alpha_3,\alpha_1) =\eta(\alpha_1, \alpha_2 \star_c \alpha_3) 
\end{align*}
\end{proof}
The decomposition of $I_G(X)$ into pieces $I(\Psi)$ and the decomposition of
$\II_G(X)$ into pieces $\Itwo(\Phi)$ gives a more refined description of
the $\star_c$ and $\star_\kclass$ products. We will use this
description to give sufficient conditions for the products to be
commutative and associative.
\begin{defi}
Given a conjugacy class $\Psi \in \G{1}$ 
and a class $\alpha \in
A_G^*(I_X)$, let $\alpha_\Psi$ be 
the
component in the summand
$A^*_G(I(\Psi))$ of $A^*_G(I_X)$. 

Likewise if $\Phi\in \G{2}$ is a diagonal conjugacy class,
we let $c_\Phi \in A^*_G({\mathbb
  I}_G(X))$ denote the component of a class $c$ in the summand
$A^*_G(\Itwo(\Phi))$.
We will use similar notation for elements of the equivariant
Grothendieck group.
\end{defi}

The $\star_c$ product can be expressed as a sum over diagonal
conjugacy classes in $G \times G$ 
as follows.   Let 
$\Phi\in\G{2}$
be a diagonal conjugacy class
and let $\Psi_1 = e_1(\Phi)$ and $\Psi_2 = e_2(\Phi)$.
Likewise, let $\Psi_3 = \mu(\Phi)$.
Given classes
$\alpha_1 \in A^*_G(I(\Psi_1))$ and $\alpha_2 \in A^*_G(I(\Psi_2))$
the product $\alpha_1 \star_c \alpha_2$ will have a contribution in
$A^*_G(I(\Psi_3))$ given by 
\begin{equation} \label{eq.phiprod}
\mu_*(e_1^*\alpha_1 \cdot e_2^*\alpha_2 \cdot c_{\Phi})
\end{equation}
The product $\alpha_1 \star_c \alpha_2$ is obtain by summing over
terms of the form \eqref{eq.phiprod} for all diagonal conjugacy
classes
$\Phi$ such that $e_1(\Phi) =\Psi_1$ and $e_2(\Phi) =\Psi_2$.

Not all choices of a class $c \in A^*_G(\II_G(X))$ produce
interesting products. We begin with a necessary and sufficient
condition for $\star_c$ to have an identity.
\begin{prop} \label{prop.chowidentity}
Let ${\mathbf 1} \in A^*_G(I_G(X))$ be the fundamental class of $I(\{1\})= X
\subset I_G(X)$. Then ${\mathbf 1}$ is the identity for the $\star_c$ product
if and only if $c_\Phi = [\Itwo(\Phi)]$ for all diagonal conjugacy classes
such that $e_1(\Phi) = \{1_G\}$ or $e_2(\Phi) = \{1_G\}$.
\end{prop}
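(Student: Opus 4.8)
The plan is to reduce the two-sided identity property to a componentwise computation, using the decomposition of Proposition~\ref{prop.chowdecomp} together with the expression \eqref{eq.phiprod} of $\star_c$ as a sum over diagonal conjugacy classes, and then to finish with the projection formula on the single component that contributes. Since $\mathbf{1}$ is supported on the summand $A^*_G(I(\{1\}))$ with $I(\{1\}) = X$, the class $e_1^*\mathbf{1}$ restricts to $0$ on every component $\Itwo(\Phi)$ with $e_1(\Phi)\neq\{1_G\}$, because there $e_1$ maps into a summand on which $\mathbf{1}$ vanishes. Hence for $\beta\in A^*_G(I(\Psi_2))$ the product $\mathbf{1}\star_c\beta$ receives a contribution only from classes $\Phi$ with $e_1(\Phi)=\{1_G\}$ and $e_2(\Phi)=\Psi_2$.

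First I would show that there is exactly one such class: conjugation fixes the first coordinate $1$ and acts transitively on $\Psi_2$ in the second, so the unique candidate is $\Phi=\Phi(1,m_2)$ for $m_2\in\Psi_2$. On it I would record the elementary facts that $\Itwo(\Phi)=\{(1,g_2,x)\mid g_2\in\Psi_2,\ g_2x=x\}$, that $e_1$ is the projection $(1,g_2,x)\mapsto x$ into $I(\{1\})=X$, and that $\mu$ agrees with $e_2$ and restricts to an isomorphism $\Itwo(\Phi)\cong I(\Psi_2)$. Because $\mathbf{1}$ is the fundamental (unit) class of $X$, it follows that $e_1^*\mathbf{1}=[\Itwo(\Phi)]$ is the unit class of $\Itwo(\Phi)$.

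With these identifications, the $I(\Psi_2)$-component of $\mathbf{1}\star_c\beta$ is, by \eqref{eq.phiprod}, equal to $\mu_*(e_2^*\beta\cdot c_\Phi)=\mu_*(\mu^*\beta\cdot c_\Phi)=\beta\cdot\mu_*(c_\Phi)$, where the last step is the projection formula together with $\mu=e_2$ on this component. Thus $\mathbf{1}$ is a left identity if and only if $\beta\cdot\mu_*(c_\Phi)=\beta$ for all $\beta$ and all $\Psi_2$; taking $\beta=[I(\Psi_2)]$ forces $\mu_*(c_\Phi)=[I(\Psi_2)]$, and since $\mu$ is an isomorphism this is equivalent to $c_\Phi=[\Itwo(\Phi)]$. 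A symmetric argument, using $e_2^*\mathbf{1}$ and the unique class $\Phi=\Phi(m_1,1)$ with $e_2(\Phi)=\{1_G\}$ (on which $\mu=e_1$ is an isomorphism), characterizes when $\mathbf{1}$ is a right identity. Combining the two directions gives precisely the stated condition, namely $c_\Phi=[\Itwo(\Phi)]$ whenever $e_1(\Phi)=\{1_G\}$ or $e_2(\Phi)=\{1_G\}$.

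The computation itself is short, so the crux is the bookkeeping. I expect the main thing to verify carefully is that exactly one diagonal conjugacy class contributes to each relevant component, and that $\mu$ restricts there to an isomorphism, so that $\mu_*$ is injective—this injectivity is exactly what makes the ``only if'' direction go through—and that $e_1^*\mathbf{1}$ is genuinely the fundamental class of $\Itwo(\Phi)$ rather than merely supported on it. Beyond keeping the roles of $e_1$, $e_2$, and $\mu$ straight on the trivial-index components, I anticipate no serious obstacle.
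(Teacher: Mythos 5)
Your proposal is correct and follows essentially the same route as the paper's proof: identify the unique contributing diagonal conjugacy class $\Phi = \{1\}\times\Psi$, observe that $\mu = e_2$ there and $e_1^*\mathbf{1} = [\Itwo(\Phi)]$, apply the projection formula to get $\mathbf{1}\star_c\beta = \beta\cdot\mu_*c_\Phi$, and use that $\mu$ is an isomorphism to conclude, with the symmetric argument handling the right-identity condition. Your additional step of testing against $\beta = [I(\Psi_2)]$ to extract the ``only if'' direction simply makes explicit what the paper leaves implicit.
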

\begin{proof}
If $\beta \in A^*_G(I(\Psi))$, then the only diagonal conjugacy class
$\Phi \in \G{2}$ 
satisfying $e_1(\Phi) = \{1\}$ and $e_2(\Phi) = \Psi$ is the
class $\Phi = (1, \Psi) \subseteq G \times G$. Thus
the only contribution to ${\mathbf 1} \star_c \beta$ is in the $\Psi$
component. On $\Itwo(\Phi)$ $\mu= e_2$ and $e_1^*{\mathbf 1} = [\Itwo(\Phi)]$.
Thus the projection formula gives
$${\mathbf 1} \star_c \alpha = \alpha \cdot \mu_* c_{\Phi}.$$
Since 
$\mu = e_2$
is an isomorphism $\Itwo(\Phi)\to I(\Psi)$, it follows
that
$\mathbf{1} \star_c \alpha = \alpha$ if and only if $c_{\Phi} = [\Itwo(\Phi)]$. 
Exchanging the roles of 
$\mathbf{1}$
and $\alpha$ yields the
condition that $c_{\Phi} = [\Itwo(\Phi)]$ for $\Phi = (\Psi,1)$.
\end{proof}
An essentially identical argument yields the following criterion for
the $\star_{\kclass}$ product. 
\begin{prop} \label{prop.kidentity}
The class $[{\mathscr O}_X] \in K_G(I_G(X))$ is the identity for the
$\star_\kclass$ product if and only if $\kclass_\Phi = [{\mathscr
  O}_{\Itwo(\Phi)}]$
for all diagonal conjugacy classes $\Phi$ 
in $\G{2}$
such that $e_1(\Phi) =
\{1_G\}$
or $e_2(\Phi) = \{1_G\}$.
\end{prop}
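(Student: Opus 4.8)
The plan is to mirror the Chow-theoretic argument of Proposition~\ref{prop.chowidentity} essentially verbatim, replacing the intersection product by the tensor product, the fundamental class $\mathbf{1}$ by the structure sheaf $[\mathscr O_X]$, and the ordinary pushforward by the $K$-theoretic pushforward along the finite morphism $\mu$. Throughout I would work with the component decomposition $K_G(\II_G(X)) = \bigoplus_\Phi K_G(\Itwo(\Phi))$ coming from Proposition~\ref{prop.inertiadecomp2}, together with the analogous decomposition of $K_G(I_G(X))$, so that the product $\star_\kclass$ splits into the contributions \eqref{eq.phiprod} indexed by diagonal conjugacy classes $\Phi \in \G{2}$.

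First I would reduce to a single component. Fix a conjugacy class $\Psi$ and a class $\mathscr G \in K_G(I(\Psi))$. Among all $\Phi \in \G{2}$ with $e_1(\Phi) = \{1_G\}$ and $e_2(\Phi) = \Psi$ there is exactly one, namely $\Phi = (1_G, \Psi)$; for every other $\Phi$ the factor $e_1^*[\mathscr O_X]$ vanishes on $\Itwo(\Phi)$, since $[\mathscr O_X]$ is supported on the $I(\{1_G\})$ summand while $e_1$ carries $\Itwo(\Phi)$ into $I(e_1(\Phi))$. Hence $[\mathscr O_X] \star_\kclass \mathscr G$ lies entirely in the $\Psi$-summand and, on that summand, equals $\mu_*\bigl(e_1^*[\mathscr O_X] \otimes e_2^*\mathscr G \otimes \kclass_\Phi\bigr)$ for this distinguished $\Phi = (1_G,\Psi)$.

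Next I would exploit that, on this component, $\mu = e_2$ restricts to a $G$-equivariant isomorphism $\Itwo(\Phi) \stackrel{\sim}{\to} I(\Psi)$ (immediate from $e_1(\Phi) = \{1_G\}$ and $e_2(\Phi) = \Psi$, or from Lemma~\ref{lem.sphi}), and that $e_1^*[\mathscr O_X] = [\mathscr O_{\Itwo(\Phi)}]$ there, as the pullback of a structure sheaf along the morphism $e_1 \colon \Itwo(\Phi) \to X$. The expression then simplifies to $\mu_*\bigl(\mu^*\mathscr G \otimes \kclass_\Phi\bigr)$, and the $K$-theoretic projection formula gives $\mu_*\bigl(\mu^*\mathscr G \otimes \kclass_\Phi\bigr) = \mathscr G \otimes \mu_*\kclass_\Phi$. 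Since $\mu$ is an isomorphism on this component, $\mu_*$ is a ring isomorphism $K_G(\Itwo(\Phi)) \to K_G(I(\Psi))$ sending $[\mathscr O_{\Itwo(\Phi)}]$ to $[\mathscr O_{I(\Psi)}]$; therefore $[\mathscr O_X]\star_\kclass \mathscr G = \mathscr G$ for all $\mathscr G$ if and only if $\mu_*\kclass_\Phi = [\mathscr O_{I(\Psi)}]$, i.e. if and only if $\kclass_\Phi = [\mathscr O_{\Itwo(\Phi)}]$. Running the symmetric computation with the roles of $e_1$ and $e_2$ interchanged forces the same condition for $\Phi = (\Psi, 1_G)$, and conversely these two families of conditions plainly make $[\mathscr O_X]$ a two-sided identity.

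I do not expect a genuine obstacle, since the geometry driving the argument---the isomorphism $\mu = e_2$ on the relevant component---is identical to the Chow case. The only points requiring care are purely $K$-theoretic: that the projection formula and the equality $\mu_*[\mathscr O_{\Itwo(\Phi)}] = [\mathscr O_{I(\Psi)}]$ are legitimate. Both follow from the finiteness of $\mu$ together with the standing resolution-property and smooth-scheme hypotheses already invoked to define the $\star_\kclass$ product, so the argument transports without change.
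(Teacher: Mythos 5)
Your proposal is correct and follows exactly the route the paper intends: the paper does not even write out a separate proof for Proposition~\ref{prop.kidentity}, stating only that ``an essentially identical argument'' to that of Proposition~\ref{prop.chowidentity} applies, and your argument is precisely that transposition---the unique class $\Phi=(1_G,\Psi)$, the identification $\mu=e_2$ as an isomorphism on $\Itwo(\Phi)$, the pullback $e_1^*[\mathscr O_X]=[\mathscr O_{\Itwo(\Phi)}]$, and the projection formula along the finite map $\mu$. Your closing remarks on the legitimacy of the $K$-theoretic projection formula under the resolution-property hypotheses are exactly the points the paper's standing assumptions are designed to cover, so nothing further is needed.
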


Next we give a condition that ensures that the $\star_c$ product is
commutative.
Let $i \colon \II_G(X) \to \II_G(X)$ be the involution induced
by the involution on $G \times G$ that exchanges the factors.
\begin{prop} \label{prop.commutative}
A sufficient condition for the $\star_c$ product to be commutative is
that for each diagonal conjugacy class $\Phi$ we have
$$i^*c_{\Phi} = c_{i(\Phi)}.$$ 
Similarly, the $\star_\kclass$ product
is commutative provided that $i^*\kclass_{\Phi} = \kclass_{i(\Phi)}$.
\end{prop}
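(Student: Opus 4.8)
The plan is to show $\alpha \star_c \beta = \beta \star_c \alpha$ for all $\alpha,\beta$, reducing the whole statement to a single comparison of two pushforwards along $\mu$. First I would record the elementary properties of the involution $i\colon \II_G(X)\to\II_G(X)$, $(g_1,g_2,x)\mapsto(g_2,g_1,x)$: it is a $G$-equivariant isomorphism with $e_1\circ i = e_2$ and $e_2\circ i = e_1$ as maps $\II_G(X)\to I_G(X)$. I would also observe that the componentwise hypothesis $i^*c_\Phi = c_{i(\Phi)}$ is equivalent to the single global identity $i^*c = c$: since the $\Itwo(\Phi)$ are unions of connected components (Proposition~\ref{prop.inertiadecomp2}) that $i$ permutes via $\Itwo(\Phi)\leftrightarrow\Itwo(i(\Phi))$, one has $(i^*c)|_{\Itwo(\Phi)} = i^*(c_{i(\Phi)})$, and applying the hypothesis with $\Phi$ replaced by $i(\Phi)$ gives $i^*c_{i(\Phi)} = c_\Phi$.

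Using that $i$ is an isomorphism (so $i_*i^* = \mathrm{id}$) together with covariance of pushforward ($\mu_*i_* = (\mu\circ i)_*$), I would rewrite
\[
\beta\star_c\alpha = \mu_*(e_1^*\beta\cdot e_2^*\alpha\cdot c) = (\mu\circ i)_*\,i^*(e_1^*\beta\cdot e_2^*\alpha\cdot c) = (\mu\circ i)_*(e_1^*\alpha\cdot e_2^*\beta\cdot c),
\]
where the last equality uses $e_1\circ i = e_2$, $e_2\circ i = e_1$, the identity $i^*c=c$, and commutativity of the intersection product. Since $\alpha\star_c\beta = \mu_*(e_1^*\alpha\cdot e_2^*\beta\cdot c)$, the proposition reduces to the single assertion $\mu_* = (\mu\circ i)_*$ as maps $A^*_G(\II_G(X))\to A^*_G(I_G(X))$.

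This last step is the crux and the main obstacle, precisely because $G$ is nonabelian: $\mu(g_1,g_2,x) = (g_1g_2,x)$ while $(\mu\circ i)(g_1,g_2,x) = (g_2g_1,x)$, so the two maps are genuinely distinct and the naive hope $\mu\circ i = \mu$ fails. They differ only by a \emph{point-dependent} conjugation, $(g_2g_1,x) = g_1^{-1}\cdot(g_1g_2,x)$ (using $g_1x=x$), and I would make this invisible on equivariant Chow groups by passing to the quotient stacks. Set $\theta\colon\II_G(X)\to G$, $\theta(g_1,g_2,x) = g_1^{-1}$. I would check the two conditions that make $\theta$ a $2$-isomorphism between the induced morphisms $[\mu],[\mu\circ i]\colon\IIX\to\IX$, namely $(\mu\circ i)(p) = \theta(p)\cdot\mu(p)$ and the conjugation-equivariance $\theta(\gamma\cdot p) = \gamma\,\theta(p)\,\gamma^{-1}$; both are immediate from the formula for $\theta$. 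Since $2$-isomorphic morphisms of quotient stacks induce identical maps on Chow groups (under $A^*_G(-) = A^*([-/G])$), this yields $\mu_* = (\mu\circ i)_*$ and finishes the argument. Alternatively, one can avoid stacks and verify $\mu_* = (\mu\circ i)_*$ componentwise through the torsor description of Lemma~\ref{lem.sphi}, where conjugation by $g_1$ becomes the canonical identification $A^*_{Z_G(m_2m_1)}(X^{m_2m_1})\cong A^*_{Z_G(m_1m_2)}(X^{m_1m_2})$ given by conjugation by $m_1$; the stack formulation is cleaner, but this is the concrete content.

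Finally, the $\star_\kclass$ case is identical: the hypothesis $i^*\kclass_\Phi = \kclass_{i(\Phi)}$ again reads $i^*\kclass = \kclass$, the same rewriting applies with $\otimes$ in place of $\cdot$, and the same $2$-isomorphism gives $\mu_* = (\mu\circ i)_*$ on $K$-theory, the relevant pushforwards existing because $\mu$ and $\mu\circ i$ are finite (the action having finite stabilizer). The only genuine work throughout lies in the verification of the previous paragraph; everything else is formal bookkeeping with the involution $i$.
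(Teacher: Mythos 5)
Your proof is correct, and it follows the same skeleton as the paper's, but the two differ in what is actually proved. The paper's own proof is a single sentence: working componentwise via \eqref{eq.phiprod}, it matches the terms of $\alpha\star_c\beta$ and $\beta\star_c\alpha$ under the bijection $\Phi\leftrightarrow i(\Phi)$, observes that matched terms land in the same summand because $\mu(\Phi)=\mu(i(\Phi))$ ($m_1m_2$ and $m_2m_1$ being conjugate), and calls the rest straightforward; the termwise equality of pushforwards is never addressed. You work globally instead (noting, correctly, that the componentwise hypothesis is equivalent to $i^*c=c$ by Proposition~\ref{prop.inertiadecomp2}), and you isolate exactly the point the paper elides: $\mu\circ i\neq\mu$ as $G$-equivariant maps, so $\mu_*=(\mu\circ i)_*$ genuinely needs an argument. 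Your argument for it is valid in either form you give: the $2$-isomorphism $\theta(g_1,g_2,x)=g_1^{-1}$ between the induced morphisms $\IIX\to\IX$ (combined with the identification $A^*_G(-)$ with the Chow groups of the quotient stack from \cite{EdGr:98}, which the paper cites, and the standard fact that $2$-isomorphic morphisms of stacks push forward identically), or the Morita/torsor computation, in which the discrepancy between $\mu$ and $\mu\circ i$ becomes conjugation by the fixed element $m_1$ and hence induces the identity after Morita equivalence --- the same mechanism the paper itself uses in Lemma~\ref{lem.indep} to prove independence of the choice of representative. In short, your route supplies the justification of the one non-formal step, which is precisely the content of the paper's phrase ``because $m_1m_2$ and $m_2m_1$ are conjugate''; the paper's componentwise bookkeeping, for its part, is the formulation that carries over directly to the associativity criterion of Lemma~\ref{lem.assoc}, where the matching of conjugacy-class-indexed terms is essential.
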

\begin{proof}
The proof is a straightforward application of \eqref{eq.phiprod} and
the fact that 
$\mu(\Phi)= \mu(i(\Phi))$, 
because if $m_1,m_2 \in G$, then
$m_1 m_2$ and $m_2 m_1$ are conjugate.
\end{proof}
\subsection{A criterion for associativity of the $\star_c$ and
  $\star_\kclass$
products} \label{sec.assoccriterion}
In this section, we give a sufficient condition for the $\star_c$ product to be
associative. An analogous condition, which we do not write down, also
holds for the $\star_\kclass$ product.
We closely follow Section 5 of \cite{JKK:07}.
Given $m_1, m_2, m_3 \in G$, with conjugacy
classes $\Psi_1, \Psi_2, \Psi_3$, respectively, let $\Psi_{12}$ and
$\Psi_{23}$ be the conjugacy class of the products $m_1m_2$ and $m_2
m_3$ and  let $\Psi_{123}$ be the conjugacy class of the product
$m_1m_2m_3$.  Let $\Phi_{1,2}$ and $\Phi_{2,3}$ be diagonal conjugacy
classes of the pairs $(m_1,m_2)$ and $(m_2,m_3)$
respectively. Let $\Phi_{12,3}$ be the diagonal conjugacy
class of the pair $(m_1m_2, m_3)$ and $\Phi_{1,23}$ the diagonal
conjugacy class of the pair $(m_1,m_2m_3)$.
\begin{lemma} \label{lem.assoc}
A sufficient condition for associativity to be satisfied is if 
\begin{align} \label{eq.assoc1}
\mu_*\left(e_1^*\mu_*(e_1^*\alpha_1  \cdot
     e_2^*\alpha_2 \cdot c_{\Phi_{1,2}}) \right. & \left.\cdot e_2^*\alpha_3 \cdot
      c_{\Phi_{12,3}}\right)=\\
 &\qquad \mu_*\left(e_1^*\alpha_1 \cdot e_2^*\mu_*(e_1^*\alpha_2 \cdot
    e_2^*\alpha_3 \cdot c_{\Phi_{2,3}}) \cdot c_{\Phi_{1,23}}\right) \notag
\end{align}
for all diagonal conjugacy classes $\Phi_{1,2}$, $\Phi_{12,3}$, $\Phi_{2,3}$,
$\Phi_{1,23}$ in $\G{2}$
determined by a triple of elements $(m_1,m_2,m_3) \in G^3$ and all classes
$\alpha_1,\alpha_2,\alpha_3 \in A^*_G(I(\Psi_i))$. 
The analogous statement
holds for the $\star_\kclass$ product.
\end{lemma}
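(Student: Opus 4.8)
The plan is to reduce the identity to a single statement on the triple inertia $\Imult{3}_G(X)$, where the two bracketings become pushforwards of integrands that differ only in their ``obstruction'' factors. First I would reduce to homogeneous classes: by bilinearity of $\star_c$ and the decomposition $A^*_G(I_G(X)) = \bigoplus_\Psi A^*_G(I(\Psi))$ of Proposition~\ref{prop.chowdecomp}, it suffices to treat $\alpha_i \in A^*_G(I(\Psi_i))$ and to compare the two triple products in a fixed target summand $A^*_G(I(\Psi_{123}))$. Applying the component formula~\eqref{eq.phiprod} twice, the $\Psi_{123}$-component of $(\alpha_1 \star_c \alpha_2)\star_c \alpha_3$ is the sum, over all compatible pairs of diagonal conjugacy classes $(\Phi_{1,2},\Phi_{12,3})$ with $\mu(\Phi_{1,2}) = e_1(\Phi_{12,3})$, of the left-hand side of~\eqref{eq.assoc1}; symmetrically, the $\Psi_{123}$-component of $\alpha_1 \star_c(\alpha_2\star_c\alpha_3)$ is the sum over compatible pairs $(\Phi_{2,3},\Phi_{1,23})$ of the right-hand side of~\eqref{eq.assoc1}. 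The lemma thus amounts to matching these two sums.

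To carry out the matching I would lift everything to $\Imult{3}_G(X)$. Write $\et_1,\et_2,\et_3\colon \Imult{3}_G(X)\to I_G(X)$ for the three projections and $\mut\colon\Imult{3}_G(X)\to I_G(X)$ for the total multiplication $(g_1,g_2,g_3,x)\mapsto (g_1g_2g_3,x)$. The key structural input is that $\Imult{3}_G(X)\cong \II_G(X)\times_{I_G(X)}\II_G(X)$ as a fiber product, realized in two ways: via (multiply the first two factors, forget the last) and via (forget the first, multiply the last two). Base change along these Cartesian squares, together with the projection formula, rewrites \emph{both} iterated products as a single pushforward $\mut_*$ of $\et_1^*\alpha_1\cdot \et_2^*\alpha_2\cdot \et_3^*\alpha_3$ times a product of two pulled-back copies of $c$---for the left bracketing the copies remembering $(g_1,g_2)$ and $(g_1g_2,g_3)$, and for the right bracketing those remembering $(g_2,g_3)$ and $(g_1,g_2g_3)$. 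Decomposing $\Imult{3}_G(X)=\coprod_\Theta \Imult{3}(\Theta)$ by Proposition~\ref{prop.inertiadecomp2} and using Lemma~\ref{lem.morequivincl} to identify the restrictions of these pulled-back classes with $c_{\Phi_{1,2}},c_{\Phi_{12,3}}$ and with $c_{\Phi_{2,3}},c_{\Phi_{1,23}}$ respectively, the equality of the two products on the component $\Imult{3}(\Theta)$ is exactly~\eqref{eq.assoc1} for the triple defining $\Theta$. Since the components are disjoint, this per-component identity yields the global equality.

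The main obstacle is the base-change step. The multiplication $\mu\colon\II_G(X)\to I_G(X)$ is finite but not flat, so the identity expressing $e_1^*\mu_*$ as a pushforward of a pullback along the Cartesian square $\Imult{3}_G(X)\cong\II_G(X)\times_{I_G(X)}\II_G(X)$ must be justified from the Tor-independent structure of that square and, in $K$-theory, from the excess-intersection and self-intersection formalism of K\"ock~\cite{Koc:98} for finite l.c.i.\ morphisms of smooth schemes. A second, more bookkeeping-heavy point is that the fiber product $\Itwo(\Phi_{1,2})\times_{I(\Psi_{12})}\Itwo(\Phi_{12,3})$ is in general a disjoint union of several components $\Imult{3}(\Theta)$---indexed by double cosets of the centralizers appearing in Lemma~\ref{lem.sphi}---so the nested left-hand side of~\eqref{eq.assoc1} is itself a sum over those components; keeping the two bracketings' component sums aligned under $\mut_*$ is where the care is required. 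The $K$-theoretic statement is proved identically, replacing the intersection product by tensor product and the Chow pushforward by the l.c.i.\ pushforward.
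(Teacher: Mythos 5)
Your first paragraph is, on its own, essentially the paper's entire proof: expand each bracketing via \eqref{eq.phiprod} into a sum over compatible pairs of diagonal conjugacy classes, observe that both index sets---pairs $(\Phi_{1,2},\Phi_{12,3})$ satisfying \eqref{eq.12to3conj} and pairs $(\Phi_{2,3},\Phi_{1,23})$ satisfying \eqref{eq.1to23conj}---are parametrized by triples $(m_1,m_2,m_3)\in G^3$, and use the hypothesis \eqref{eq.assoc1}, which is stated per triple, to identify corresponding terms; the paper then simply notes that these assignments give a bijection between the two index sets, so the sums agree. Nothing beyond the double inertia is needed. The gap is in your second paragraph, where you try to ``carry out the matching'' by lifting to $\Imult{3}_G(X)$. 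The base change you invoke is false as stated: $\mu$ is finite and l.c.i.\ but not flat, and the squares \eqref{diag.e12}, \eqref{diag.e23} are not transverse---the excess bundles $E_{1,2}$, $E_{2,3}$ are in general nonzero---so $e_1^*\mu_* \neq \mu_{12,3*}\,e_{1,2}^*$; the correct identity is $e_1^*\mu_*x = \mu_{12,3*}\bigl(\euler(E_{1,2})\cdot e_{1,2}^*x\bigr)$. Consequently the lifted integrands necessarily carry the factors $\euler(E_{1,2})$ and $\euler(E_{2,3})$, and the per-component identity your argument would need on each $\Imult{3}(\Theta)$ is exactly \eqref{eq.assoc2} of Proposition~\ref{prop.assoc}, not \eqref{eq.assoc1}.

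This is not a repairable technicality but a mismatch of logical levels. The hypothesis \eqref{eq.assoc1} is an equality of \emph{already pushed-forward} quantities attached to a pair of classes in $\G{2}$, and---as you yourself concede in your final paragraph---each side of \eqref{eq.assoc1} is a sum of contributions from the several components $\Imult{3}(\Theta)$ lying over that pair. That concession directly contradicts your earlier claim that ``the equality of the two products on the component $\Imult{3}(\Theta)$ is exactly \eqref{eq.assoc1} for the triple defining $\Theta$'': \eqref{eq.assoc1} cannot be fed into your argument component by component, and nothing in the hypothesis gives the componentwise equalities your disjoint-union argument requires. What your lifting actually establishes is the implication ``\eqref{eq.assoc2} for all triples implies associativity,'' which is Proposition~\ref{prop.assoc} (whose proof in the paper is precisely your excess-intersection computation); it does not prove Lemma~\ref{lem.assoc}. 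The one piece of content the lemma requires after your first paragraph---the combinatorial matching of the two families of compatible pairs under their common parametrization by triples $(m_1,m_2,m_3)$---is never supplied by the detour through $\Imult{3}_G(X)$.
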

\begin{proof}
We only give the proof for the $\star_c$ product, as the proof for the
$\star_\kclass$ product is essentially identical.
Let $\Psi_{1}, \Psi_2,\Psi_3, \Psi_{123}$ be conjugacy
classes in $G$.
Given classes $\alpha_1 \in A^*_G(I(\Psi_1)), \alpha_2 \in
A^*_G(I(\Psi_2)), \alpha_3 \in A^*_G(I(\Psi_3))$, the component of
the product 
$$(\alpha_1 \star_c \alpha_2) \star_c \alpha_3$$ in
$A^*_G(I(\Psi_{123}))$ is the sum
  \begin{equation} \label{eq.12to3} \sum_{\Phi_{1,2}}
    \sum_{\Phi_{12,3}} \mu_*\left(e_1^*\mu_*(e_1^*\alpha_1 \cdot
      e_2^*\alpha_2 \cdot c_{\Phi_{1,2}}) \cdot e_3^*\alpha_3 \cdot
      c_{\Phi_{12,3}}\right),
\end{equation}
where the sum is over all pairs of diagonal conjugacy classes 
$\Phi_{1,2}$, $\Phi_{12,3} $ in $\G{2}$ 
satisfying 
\begin{equation} \label{eq.12to3conj}
\begin{array}{lll}
e_1(\Phi_{1,2})= \Psi_1, \quad & e_2(\Phi_{1,2}) =
\Psi_2, \quad & \mu(\Phi_{1,2}) = e_1(\Phi_{12,3}),\\
 e_2(\Phi_{12,3}) =
\Psi_3, \quad &  \mu(\Phi_{12,3}) = \Psi_{123}.
\end{array}
\end{equation}
Similarly, 
the $\Psi_{123}$ component of the product $\alpha_{1} \star_c
(\alpha_2 \star_c \alpha_3)$ is calculated by the following sum.
\begin{equation} \label{eq.1to23} \sum_{\Phi_{1,23}} \sum_{\Phi_{2,3}}
 \mu_*\left(e_1^*\alpha_1 \cdot e_2^*\mu_*(e_1^*\alpha_2 \cdot
    e_2^*\alpha_3 \cdot c_{\Phi_{2,3}}) \cdot c_{\Phi_{1,23}}\right),
\end{equation}
where the sum is over all pairs of diagonal conjugacy classes 
$\Phi_{1,23}, \Phi_{2,3}$ in $\G{2}$ 
satisfying 
\begin{equation}\label{eq.1to23conj}
\begin{array}{lll} 
e_1(\Phi_{2,3}) =  \Psi_2, \quad & e_2(\Phi_{2,3}) =  \Psi_3,   
\quad &e_1(\Phi_{1,23}) = \Psi_1\\
e_2(\Phi_{1,23}) = \mu(\Phi_{2,3}),  \quad& \mu(\Phi_{1,23}) = \Psi_{123}
\end{array}
\end{equation}
A sufficient condition for the $\star_c$ products $(\alpha_1 \star_c
\alpha_2) \star_c \alpha_3$ and $(\alpha_{1} \star_c \alpha_2)\star_c
\alpha_3$ to be equal is that the terms in sums \eqref{eq.1to23}
and \eqref{eq.12to3} be identified. The assignments
$(m_1,m_2,m_3)
\mapsto \Phi_{1,2}, \Phi_{12,3}$ and $(m_1,m_2,m_3) \mapsto
\Phi_{2,3}, \Phi_{1,23}$ gives a bijection between the set of
of pairs of conjugacy classes satisfying \eqref{eq.12to3conj} and
those satisfying \eqref{eq.1to23conj}.
\end{proof}
Following \cite{JKK:07} the condition of Lemma~\ref{lem.assoc} can be
expressed in terms of excess normal bundles.
Given $m_1, m_2, m_3 \in G$,
let $\Phi_{1,2,3}$ in $\G{3}$ denote the diagonal conjugacy class of the tuple $(m_1, m_2,m_3)$, $\Phi_{12,3}$ in $\G{2}$ the diagonal conjugacy class of 
$(m_1m_2,m_3)$, $\Phi_{1,23}$ in $\G{2}$ the diagonal conjugacy class of $(m_1,m_2m_3)$, $\Phi_{i,j}$ in $\G{2}$ the diagonal conjugacy class of $(m_i,m_j)$ for
$1\leq i < j \leq 3$ and $\Psi_{123}$ the conjugacy class of
$m_1m_2m_3$.
For 
$1 \leq i < j  \leq 3$ 
there are evaluation maps
$e_{i,j} \colon \Imult{3}(\Phi_{1,2,3}) \to
\Itwo(\Phi_{i,j})$. Also, for $1\leq i \leq 3$ there are 
evaluation maps 
$\epsilon_i \colon \Imult{3}(\Phi_{1,2,3}) \to I(\Psi_i)$.
Likewise there are product maps
$\mu_{12,3} \colon \Imult{3}(\Phi_{1,2,3}) \to \Itwo(\Phi_{12,3})$ and
$\mu_{1,23}\colon \Imult{3}(\Phi_{1,2,3}) \to \Itwo(\Phi_{1,23})$ and $\mu_{123} \colon
  \Imult{3}(\Phi_{1,2,3})\to I(\Psi_{123})$.
\begin{lemma}
The diagrams 
\begin{equation} \label{diag.e12} 
\begin{array}{ccc}
\Imult{3}(\Phi_{1,2,3}) & \stackrel{e_{1,2}} \to & \Itwo(\Phi_{1,2})\\
\mu_{12,3}\downarrow & & \mu \downarrow\\
\Itwo(\Phi_{12,3}) & \stackrel{e_1}\to & I({\Psi_{12}})
\end{array}
\end{equation}
and 
\begin{equation} \label{diag.e23}
\begin{array}{ccc}
\Imult{3}(\Phi_{1,2,3}) & \stackrel{e_{2,3}} \to & \Itwo(\Phi_{2,3})\\
\mu_{1,23}\downarrow & & \mu \downarrow\\
\Itwo(\Phi_{1,23}) & \stackrel{e_2}\to & I(\Psi_{23})
\end{array}
\end{equation}
are Cartesian and the horizontal arrows are finite local complete
intersection morphisms.
\end{lemma}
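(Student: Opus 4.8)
The plan is to prove each square Cartesian by writing down the canonical comparison morphism to the fiber product and showing it is an isomorphism; I treat the first square \eqref{diag.e12}, the second being identical after interchanging the outer factors. First I would check commutativity: both composites $\Imult{3}(\Phi_{1,2,3}) \to I(\Psi_{12})$ send $(g_1,g_2,g_3,x)$ to $(g_1g_2,x)$, since $\mu\circ e_{1,2}$ records $(g_1,g_2,x)$ and then multiplies, while $e_1\circ\mu_{12,3}$ first forms $(g_1g_2,g_3,x)$ and then projects to the first factor. This produces a canonical $G$-equivariant map $\phi\colon \Imult{3}(\Phi_{1,2,3})\to P$, where $P:=\Itwo(\Phi_{1,2})\times_{I(\Psi_{12})}\Itwo(\Phi_{12,3})$, and the task reduces to showing that $\phi$ is an isomorphism and that the horizontal arrows are finite local complete intersection morphisms.

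For the horizontal maps I would pass to the slice description of Lemma~\ref{lem.sphi}. Choosing the representative $\bm=(m_1,m_2,m_3)$, the torsor presentations identify $e_{1,2}$ on slices with the inclusion $X^{m_1,m_2,m_3}=X^{m_1,m_2}\cap X^{m_3}\hookrightarrow X^{m_1,m_2}$, and $e_1$ with $X^{m_1m_2,m_3}=X^{m_1m_2}\cap X^{m_3}\hookrightarrow X^{m_1m_2}$. Each is a closed embedding of the fixed locus of the finite-order, hence semisimple, element $m_3$; in characteristic zero such a fixed locus inside a smooth space is smooth, so the embedding is regular, i.e. a local complete intersection morphism. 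Finiteness is where the standing hypothesis enters: the fibre of $e_{1,2}$ over $(h_1,h_2,x)$ consists of the $g_3$ with $(h_1,h_2,g_3)$ diagonally conjugate to $\bm$ and $g_3x=x$, and since $G$ acts with finite stabilizer the relation $g_3x=x$ confines $g_3$ to the finite group $\Stab(x)$, so the fibre is finite.

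The heart of the argument, and the step I expect to be the main obstacle, is surjectivity of $\phi$: the group-theoretic assertion that if $(g_1,g_2)$ is diagonally conjugate to $(m_1,m_2)$ and $(g_1g_2,g_3)$ is diagonally conjugate to $(m_1m_2,m_3)$, then $(g_1,g_2,g_3)$ is diagonally conjugate to $(m_1,m_2,m_3)$; injectivity is immediate, since $(g_1,g_2,g_3)$ is recovered from its image in $P$. Writing $(g_1,g_2)=a\cdot(m_1,m_2)$ and $(g_1g_2,g_3)=b\cdot(m_1m_2,m_3)$, one finds that $z:=a^{-1}b$ centralizes $m_1m_2$ and that $(g_1,g_2,g_3)$ is conjugate to $(m_1,m_2,z^{-1}m_3z)$. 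Thus surjectivity comes down exactly to the centralizer containment $Z_G(m_1m_2)\subseteq Z_G(m_3)$, which forces $z^{-1}m_3z=m_3$. In the situation relevant to the associativity argument the triple satisfies $m_1m_2m_3=1$, whence $m_1m_2=m_3^{-1}$ and $Z_G(m_1m_2)=Z_G(m_3)$; under this relation Proposition~\ref{prop:IsomOfInertias} in fact already exhibits both $e_{1,2}$ and $e_1$ as isomorphisms, so the containment is automatic and should be isolated as the key input.

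Finally I would upgrade the bijection $\phi$ to a scheme isomorphism by returning to the slices. As subschemes of $X^{m_1m_2}$ one has $X^{m_1,m_2}\cap X^{m_1m_2,m_3}=X^{m_1,m_2}\cap X^{m_3}=X^{m_1,m_2,m_3}$, all understood scheme-theoretically as fixed loci, so the square of closed embeddings is Cartesian at the level of slices; transporting this identity through the torsor presentations of Lemma~\ref{lem.sphi}, and using the centralizer condition above to match up the four centralizers, shows that $\phi$ is an isomorphism of smooth $G$-spaces. The second square \eqref{diag.e23} is handled verbatim with the roles of $m_1$ and $m_3$ exchanged, the corresponding centralizer condition being $Z_G(m_2m_3)\subseteq Z_G(m_1)$.
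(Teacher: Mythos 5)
Your reduction of the Cartesian property to surjectivity of the comparison map $\phi$, and of surjectivity to a centralizer condition, is the right analysis; the way you then dispose of that condition is where the proof breaks. The lemma carries no hypothesis $m_1m_2m_3=1$, and it is invoked, through Proposition~\ref{prop.assoc}, in the proof of Theorem~\ref{thm.assoc} for \emph{every} triple $(m_1,m_2,m_3)$ with $\Imult{3}(\Phi_{1,2,3})\neq\emptyset$; the auxiliary element $m_4=(m_1m_2m_3)^{-1}$ enters that proof only through the logarithmic-restriction classes, not through the geometry of the squares \eqref{diag.e12} and \eqref{diag.e23}. So your appeal to Proposition~\ref{prop:IsomOfInertias} covers only a special case. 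Worse, outside that case the obstruction you identified is real. The exact condition for surjectivity is $Z_G(m_1m_2)\subseteq Z_G(m_1,m_2)\cdot Z_G(m_3)$ (your containment $Z_G(m_1m_2)\subseteq Z_G(m_3)$ is sufficient but stronger than needed), and it can fail: take $G=S_3$ acting on a point and $(m_1,m_2,m_3)=\bigl((12),(13),(12)\bigr)$. Then $\Itwo(\Phi_{1,2})$ is the set of the $6$ ordered pairs of distinct transpositions, $\Itwo(\Phi_{12,3})$ is the set of all $6$ pairs consisting of a three-cycle and a transposition, and $I(\Psi_{12})$ consists of the two three-cycles; the fiber product therefore has $18$ points, while $\Imult{3}(\Phi_{1,2,3})$ has only $6$. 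What is true in general is that $\Itwo(\Phi_{1,2})\times_{I(\Psi_{12})}\Itwo(\Phi_{12,3})$ is the disjoint union of the $\Imult{3}(\Phi')$ over all classes $\Phi'\in\G{3}$ of triples $(g_1,g_2,g_3)$ with $(g_1,g_2)\in\Phi_{1,2}$ and $(g_1g_2,g_3)\in\Phi_{12,3}$; when $m_1m_2m_3=1$ this union reduces to the single class $\Phi_{1,2,3}$, as you observe, but not in general. In other words, no argument can close your gap in the stated generality: what your analysis has actually uncovered is that the statement (and, correspondingly, the excess-intersection step in Proposition~\ref{prop.assoc}) needs to be reformulated with this disjoint union in the upper-left corner. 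Note that the paper's own proof passes over exactly this point with the single sentence that ``an easy calculation with fixed loci shows that the diagrams are Cartesian.''

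Two further steps of yours are incomplete independently of this. For finiteness, you only verify that fibers are finite, which gives quasi-finiteness, not finiteness (a non-trivial open immersion is quasi-finite but not finite). The paper's argument is to factor $e_1$ as the open-and-closed embedding of $\Itwo(\Phi_{12,3})$ into $\II_G(X)$ (Proposition~\ref{prop.inertiadecomp2}) followed by a projection obtained by base change from $I_G(X)\to X$, which is finite precisely by the finite-stabilizer hypothesis; finiteness of $e_{1,2}$ is then deduced by base change, using the Cartesian property. For the l.c.i.\ claim, $e_{1,2}$ is \emph{not} identified on slices with the closed embedding $X^{m_1,m_2,m_3}\hookrightarrow X^{m_1,m_2}$: the torsor presentations of Lemma~\ref{lem.sphi} for source and target have different structure groups $Z_G(m_1,m_2,m_3)\subseteq Z_G(m_1,m_2)$, and $e_{1,2}$ need not be injective at all --- for $m_1=m_2=1$ and $m_3$ a transposition in $G=S_3$ acting on a point, $e_{1,2}$ is $3$-to-$1$. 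The paper's argument here is the robust one: all the spaces in the squares are smooth by Lemma~\ref{lem.sphi}, and any morphism between smooth spaces is l.c.i.\ (factor it through its graph).
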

\begin{proof}
Since $X$ is smooth, the maps are l.c.i.~because 
the $\Imult{l}(\Phi)$
and $I(\Psi)$
are all smooth. 
An easy calculation with fixed loci shows that the diagrams are Cartesian.
The map $e_1$ is the composition 
$$\Itwo(\Phi_{12,3})\hookrightarrow  I(\Psi_{12})\times_X I(\Psi_{12}) \to I(\Psi_{12}).$$
By Proposition~\ref{prop.inertiadecomp2}, the first map is an open and closed
embedding. 
We assume that the action
has finite stabilizer so the second map is finite, since it is obtained
by base change from the projection $I_G(X) \to X$. Hence by base change
the map $e_{1,2}$ is finite. An identical argument shows that $e_2$ and
$e_{2,3}$ are also finite.
\end{proof}
Let $E_{1,2}$ be the excess normal bundle for diagram \eqref{diag.e12}
and let $E_{2,3}$ be the excess normal bundle for diagram
\eqref{diag.e23}.
\begin{prop} \label{prop.assoc}
A sufficient condition for the $\star_c$ product to be associative 
is that the following identity holds in $A^*_G(\Imult{3}(\Phi_{1,2,3}))$:
\begin{equation} \label{eq.assoc2}
e_{1,2}^*c_{\Phi_{1,2}} \cdot \mu_{12,3}^*c_{\Phi_{12,3}} \cdot 
\euler(E_{1,2}) 
=
e_{2,3}^*c_{\Phi_{2,3}} \cdot \mu_{1,23}^*c_{\Phi_{1,23}} \cdot 
\euler(E_{2,3})
\end{equation}
for all triples of elements 
$(m_1,m_2,m_3) \in G^3$, and where $\euler$ denotes the Euler class, as in Definition~\ref{def:euler}.
A sufficient condition for the $\star_\kclass$ product to be
associative is that
\begin{equation} \label{eq.kassoc2} 
e_{1,2}^*\kclass_{\Phi_{1,2}} \otimes
  \mu_{12,3}^*\kclass_{\Phi_{12,3}} \otimes \lambda_{-1}(E^*_{1,2}) =
  e_{2,3}^*c_{2,3} \otimes \kclass_{\Phi_{1,23}}^*c_{1,23} \otimes
  \lambda_{-1}(E^*_{2,3})
\end{equation}
\end{prop}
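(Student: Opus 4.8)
The plan is to deduce the associativity condition \eqref{eq.assoc1} of Lemma~\ref{lem.assoc} from \eqref{eq.assoc2} by rewriting each side of \eqref{eq.assoc1} as a single pushforward along $\mu_{123}$ from the triple inertia $\Imult{3}(\Phi_{1,2,3})$. The triple inertia is the common space dominating all the relevant double and single pieces through the maps $e_{i,j}$, $\epsilon_i$, $\mu_{12,3}$, $\mu_{1,23}$ and $\mu_{123}$, and the preceding lemma has arranged precisely the two ingredients we need: the squares \eqref{diag.e12} and \eqref{diag.e23} are Cartesian, and their horizontal arrows are finite l.c.i.\ morphisms. These are exactly the hypotheses under which the excess intersection formula applies, with the excess normal bundles $E_{1,2}$, $E_{2,3}$ supplying the correction terms.

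First I would handle the left-hand side of \eqref{eq.assoc1}, which computes $\alpha_1\star_c(\alpha_2)\star_c\alpha_3$. Applying the excess intersection formula to the Cartesian square \eqref{diag.e12} (Fulton's key formula \cite[Ch.~6]{Ful:84} in the Chow case, and K\"ock's excess intersection formula \cite[Theorem~3.8]{Koc:98} in the $K$-theory case) rewrites the mixed operation $e_1^*\mu_*(-)$ as $\mu_{12,3\,*}\bigl(e_{1,2}^*(-)\cdot\euler(E_{1,2})\bigr)$. I would then invoke the map identities
\begin{equation*}
e_1\circ e_{1,2}=\epsilon_1,\quad e_2\circ e_{1,2}=\epsilon_2,\quad e_2\circ\mu_{12,3}=\epsilon_3,\quad \mu\circ\mu_{12,3}=\mu_{123},
\end{equation*}
together with the projection formula (first for $\mu_{12,3}$, to pull the outer factors $e_2^*\alpha_3\cdot c_{\Phi_{12,3}}$ inside the pushforward, and then for the composite $\mu\circ\mu_{12,3}=\mu_{123}$) to collapse the nested pushforwards. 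The outcome is that the left-hand side of \eqref{eq.assoc1} equals
\begin{equation*}
\mu_{123\,*}\Bigl(\epsilon_1^*\alpha_1\cdot\epsilon_2^*\alpha_2\cdot\epsilon_3^*\alpha_3\cdot\bigl(e_{1,2}^*c_{\Phi_{1,2}}\cdot\mu_{12,3}^*c_{\Phi_{12,3}}\cdot\euler(E_{1,2})\bigr)\Bigr).
\end{equation*}

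A verbatim computation for the other parenthesization $\alpha_1\star_c(\alpha_2\star_c\alpha_3)$, now using the Cartesian square \eqref{diag.e23} and the symmetric identities $e_1\circ e_{2,3}=\epsilon_2$, $e_2\circ e_{2,3}=\epsilon_3$, $e_1\circ\mu_{1,23}=\epsilon_1$, $\mu\circ\mu_{1,23}=\mu_{123}$, shows that the right-hand side of \eqref{eq.assoc1} equals the same expression with the bracketed class replaced by $e_{2,3}^*c_{\Phi_{2,3}}\cdot\mu_{1,23}^*c_{\Phi_{1,23}}\cdot\euler(E_{2,3})$. Since the two outer recipes $\mu_{123\,*}\bigl(\epsilon_1^*\alpha_1\cdot\epsilon_2^*\alpha_2\cdot\epsilon_3^*\alpha_3\cdot(-)\bigr)$ are identical apart from these two bracketed classes, the hypothesis \eqref{eq.assoc2} forces the two sides of \eqref{eq.assoc1} to agree, and Lemma~\ref{lem.assoc} then gives associativity of $\star_c$. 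The $\star_\kclass$ case is identical after replacing each $\euler(E)$ by $\lambda_{-1}(E^*)$ and citing \cite[Theorem~3.8]{Koc:98}, which produces condition \eqref{eq.kassoc2}.

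The main obstacle I anticipate is bookkeeping rather than conceptual content: one must check that the correction bundle delivered by the base-change formula for each square is genuinely the excess normal bundle $E_{1,2}$ (resp.\ $E_{2,3}$) defined before the statement, and that every composite-map identity above holds on the nose and not merely up to the torsor identifications of Lemma~\ref{lem.sphi}. The finiteness of all the horizontal maps, established in the preceding lemma from the finite-stabilizer hypothesis, is what legitimizes the pushforwards and the projection formula; in $K$-theory one additionally needs the resolution property for K\"ock's formula to apply, which is the reason the background hypotheses restrict to smooth schemes in that setting.
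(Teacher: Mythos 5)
Your proposal is correct and takes essentially the same approach as the paper: both apply the excess intersection formula to the Cartesian squares \eqref{diag.e12} and \eqref{diag.e23}, then use the composite-map identities and the projection formula to rewrite each side of \eqref{eq.assoc1} as a pushforward along $\mu_{123}$ of $\epsilon_1^*\alpha_1\cdot\epsilon_2^*\alpha_2\cdot\epsilon_3^*\alpha_3$ times the corresponding bracketed class, whence \eqref{eq.assoc2} forces equality. The $K$-theoretic case, handled via K\"ock's excess intersection formula under the resolution property, is treated identically in both.
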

\begin{proof}
Again we only give the proof in equivariant Chow theory as the proof
in equivariant $K$-theory is essentially identical.  We wish to
compare the two sides of \eqref{eq.assoc1}. As in \cite{JKK:07} we
will use the excess intersection formula. However the morphisms we
consider are finite local complete intersection morphisms.  In
equivariant intersection theory, the excess intersection formula follows from
the definition of equivariant Chow groups and the corresponding
non-equivariant excess intersection formula for algebraic spaces
\cite[Theorem 6.5]{Ful:84} (see Remark~\ref{rem.needspace} above).
In equivariant $K$-theory, the excess intersection
formula for finite l.c.i.~morphisms of schemes satisfying the resolution
property follows from \cite[Theorem 3.8]{Koc:98}.

By the
equivariant excess intersection formula for l.c.i.~morphisms, 
if $x \in
A_*^G(\Itwo(\Phi_{1,2}))$, then 
$e_1^*\mu_* x = \mu_{12,3*}(
\euler(E_{1,2}) 
\cdot e_{1,2}^*x)$. Thus the left-hand side of
\eqref{eq.assoc1} can be rewritten as 
\begin{equation} \label{eq.step1}
\mu_*\left[\mu_{12,3*}\left(e_{1,2}^*e_1^*\alpha_1 \cdot e_{1,2}^*e_2^*\alpha_2
  \cdot e_{1,2}^*c_{\Phi_{1,2}} \cdot 
\euler(E_{1,2})
\right) \cdot
e_2^*\alpha_3 \cdot c_{\Phi_{12,3}}\right]
\end{equation}
Since $\mu \circ \mu_{12,3} = \mu_{1,2,3}$ and $e_1 \circ e_{1,2}
=\epsilon_1$,
$e_2 \circ e_{1,2} = \epsilon_2$, we may apply the projection formula
for the map $\mu_{12,3*}$ to rewrite \eqref{eq.step1} as
\begin{equation} \label{eq.step3}
\mu_{1,2,3*}\left(\epsilon_1^*\alpha_1 \cdot \epsilon_2^*\alpha_2
\cdot e_{1,2}^*c_{\Phi_{1,2}}
\cdot 
\euler(E_{1,2}) 
\right)
\cdot \mu_{12,3}^*e_{2}^*\alpha_3 \cdot
\mu_{12,3}^*c_{\Phi_{12,3}}.
\end{equation}
Finally, we note that $\mu_{12,3} \circ e_{2} = \epsilon_3$, so
\eqref{eq.step3} can be rewritten as
\begin{equation} \label{eq.lhs}
\mu_{1,2,3*}\left( \epsilon_1^*\alpha_1 \cdot \epsilon_2^* \alpha_2
  \cdot \epsilon_3^* \alpha_3 \cdot e_{1,2}^*c_{\Phi_{1,2}} \cdot \mu_{12,3}^*
  c_{\Phi_{12,3}} \cdot 
\euler(E_{1,2})
\right).
\end{equation}
A similar calculation shows that the right-hand side of
\eqref{eq.assoc1} can be rewritten  as
\begin{equation} \label{eq.rhs}
\mu_{1,2,3*}\left( \epsilon_1^*\alpha_1 \cdot \epsilon_2^* \alpha_2
 \cdot  \epsilon_3^* \alpha_3 \cdot e_{2,3}^*c_{\Phi_{2,3}} \cdot \mu_{1,23}^*
  c_{\Phi_{1,23}} \cdot 
\euler(E_{2,3})
\right)
\end{equation}
Clearly \eqref{eq.assoc2} implies that \eqref{eq.lhs} and
\eqref{eq.rhs} are equal.
\end{proof}

\section{Logarithmic traces} \label{sec.logtrace} 
Let $X$ be an
algebraic space with the action of algebraic group $Z$. Suppose that
we are given a rank-$n$ $Z$-equivariant vector bundle $V$ on $X$ and
matrices $g_1, \ldots , g_l \in U(n)$ satisfying $\prod_{i = 1}^l g_i
= 1$ which act on the fibers of $V \to X$ and whose action commutes
with the action of $Z$. In this section we define the \emph{logarithmic trace} 
$L(g_i)(V) \in K_Z(X)$ 
and show that 
$$\sum_{i=1}^{l}L(g_i)(V) - V + 
\bigcap_{i=1}^l V^{g_i}$$ 
is represented by a non-negative integral element of 
$K_Z(X)$.

\subsection{The logarithmic trace on a complex vector space} \label{sec.logtracemodule}
\begin{defi}
Let $V$ be an $n$-dimensional complex vector space and let $g \in
\GL(V)$ be an element which lies in some compact subgroup $K \subset
\GL(V)$. This is equivalent to assuming that $g$ is conjugate to a
unitary matrix.
Write the eigenvalues of $V$ as 
$$\exp(2\pi \sqrt{-1} \alpha_1), \ldots
, \exp(2\pi \sqrt{-1} \alpha_n)$$ with 
$0\leq \alpha_i < 1$ for all $i=1,\ldots,n$.
Define the \emph{logarithmic trace} of $g$ by the formula
\begin{equation} \label{eq.logtraced}
L(g) = \sum_{i = 1}^n \alpha_i
\end{equation}
\end{defi}

The key fact we need about the logarithmic trace is the following proposition:
\begin{prop} \label{prop.logtracepositive} \cite{FaWe:06} Let
$V$ be an $n$-dimensional complex vector space and 
let $g_1, \ldots g_l \in \GL(V)$ be elements which lie in a common
compact subgroup $K \subset \GL(V)$ and satisfy $g_1 \ldots g_l =1$.
Then $\sum_{i=1}^l L(g_i)$ is a non-negative integer satisfying the
inequality 
\begin{equation} \label{eq.traceineq}
\sum_{i =1}^l L(g_i) \geq n - n_0
\end{equation}
where $n_0 = \dim \bigcap_{i =1}^l V^{g_i}$ is the dimension of the
invariant subspace.
\end{prop}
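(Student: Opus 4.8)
The plan is to prove Proposition~\ref{prop.logtracepositive} by reducing to the abelian case and invoking the result of Falbel and Wentworth, while extracting the precise inequality. First I would observe that the statement is really two assertions: that $\sum_i L(g_i)$ is a \emph{non-negative integer}, and that it satisfies the lower bound \eqref{eq.traceineq}. For the integrality, I would note that each $L(g_i)$ is a sum of real numbers in $[0,1)$, so a priori $\sum_i L(g_i)$ is only a non-negative real number; the claim that it is an integer must come from the determinant condition. Since $g_1\cdots g_l = 1$, we have $\prod_{i=1}^l \det(g_i) = 1$, and $\det(g_i) = \exp\!\left(2\pi\sqrt{-1}\,L(g_i)\right)$ because the determinant is the product of the eigenvalues. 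Hence $\exp\!\left(2\pi\sqrt{-1}\sum_i L(g_i)\right) = 1$, which forces $\sum_i L(g_i)\in\Z$. Combined with non-negativity of each summand, this gives the non-negative integer claim.

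For the inequality \eqref{eq.traceineq}, the main work is the bound $\sum_i L(g_i)\geq n-n_0$, and here I would appeal directly to the cited theorem of Falbel and Wentworth~\cite{FaWe:06}, which is precisely an inequality for the arguments of eigenvalues of a product of unitary matrices equal to the identity. Since all the $g_i$ lie in a common compact subgroup $K\subset\GL(V)$, they are simultaneously conjugate into $U(n)$, and neither $L(g_i)$ nor $n_0 = \dim\bigcap_i V^{g_i}$ changes under simultaneous conjugation; so we may assume $g_i\in U(n)$ outright and quote~\cite{FaWe:06}. The role of $n_0$ is that the $g_i$ act trivially on the common invariant subspace $\bigcap_i V^{g_i}$, contributing zero to each $L(g_i)$, so the inequality is really a statement about the action on the complementary $(n-n_0)$-dimensional space, where the bound $\sum_i L(g_i)\geq n-n_0$ is the sharp content of the Falbel--Wentworth estimate.

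I expect the only genuine obstacle to be ensuring that the hypotheses of~\cite{FaWe:06} are met and that their normalization of the logarithmic trace matches ours. Specifically, one must check that their convention places the arguments $\alpha_i$ in the half-open interval $[0,1)$ (equivalently arguments in $[0,2\pi)$), since a different branch choice would shift the inequality; with our convention $0\leq\alpha_i<1$ this is exactly their setup. A secondary point is that the common invariant subspace splits off as a $g_i$-invariant orthogonal summand, so that restricting to its complement is legitimate and does not alter the remaining eigenvalue data. Once these bookkeeping matters are settled, the proposition follows immediately by combining the determinant computation for integrality with the quoted inequality.

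\begin{remark}
The two ingredients are logically independent: the determinant argument gives integrality and non-negativity unconditionally, whereas the sharp lower bound $n-n_0$ is the substantive analytic input supplied by~\cite{FaWe:06}.
\end{remark}
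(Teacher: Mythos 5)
Your proposal is correct and follows essentially the same route as the paper: integrality and non-negativity via $\exp\!\left(2\pi\sqrt{-1}\,L(g_i)\right)=\det g_i$ together with $\prod_i \det g_i = 1$, and the inequality by conjugating the common compact subgroup into $U(n)$ (where $L(g_i)$ and $n_0$ are unchanged) and quoting Theorem 2.2 of Falbel--Wentworth. The extra bookkeeping you flag (normalization of arguments in $[0,1)$, splitting off the invariant subspace) is sound but not needed beyond the direct citation, since the Falbel--Wentworth statement already incorporates the $n_0$ term.
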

\begin{proof}
The fact that $\sum_{i = 1}^l L(g_i) \in  {\mathbb Z}$ follows from the fact
that 
$$\exp(2 \pi \sqrt{-1} L(g_i)) = \det g_i$$ and $\prod_{i =1}^l \det
g_i =1$. Since $K$ lies in a subgroup of $\GL(V)$ isomorphic to $U(n)$,
the inequality \eqref{eq.traceineq} follows from Theorem 2.2 of
\cite{FaWe:06}, 
which states that for any $l$ unitary matrices $A_1,\dots,A_l\in U(n)$ with product equal to the identity matrix $\mathbf{1}$, the sum of the logarithmic traces $\sum_{i=1}^l L(A_i)$ is greater than or equal to $n-n_0$. 
\end{proof}
\subsection{The logarithmic trace on vector bundles}
\label{sec.logtracevectorbundles} Let $X$ be an algebraic space with
the action of an algebraic group $Z$.  The definition of the
logarithmic trace \eqref{eq.logtraced} and the inequality 
\eqref{eq.traceineq} of 
Proposition~\ref{prop.logtracepositive} can be easily generalized to
$Z$-equivariant vector bundles. 
The $K$-theory version of the logarithmic trace
will be used to define the twisted pullback bundles used in the
construction of the inertial group scheme product.
\begin{defi}
Let $V$ be a rank-$n$ vector bundle on $X$ and let $g$ be a 
a unitary automorphism of the fibers of $V \to X$. If we assume  that the action of $g$ 
commutes with action of $Z$ on $V$, 
the eigenbundles for the action of $g$ are all $Z$-subbundles, 
and we define the \emph{logarithmic trace  of $V$} by the formula 
\begin{equation} \label{eq.logtracebund}
L(g)(V) = \sum_{k =1}^{m} \alpha_k V_k \in
K_Z(X) \otimes {\mathbb R}
\end{equation}
on each connected component of $X$.
Here
$\exp(2\pi \sqrt{-1} \alpha_1), \ldots ,\exp(2\pi \sqrt{-1}
\alpha_m)$ are the distinct eigenvalues of $g$ acting on $V$, 
with $0\le \alpha_k <1$ for all $k\in\{1\dots,m\}$,
and $V_1, \ldots , V_m$ are the corresponding eigenbundles.  
\end{defi}
\begin{defi}\label{def:age}
Let $V$ be a rank-$n$ vector bundle on $X$ and let $g$ be a
unitary automorphism of the fibers of $V \to X$ which commutes with
$Z$.  The \emph{age $a_V(g)$ of $g$ on $V$} is the rank of
$L(g)(V)$, which is a locally constant function on 
$X$
\[
a_V(g) := \sum_{k =1}^{m} \alpha_k \; \mathrm{dim}(V_k)
\]
\end{defi} 

\begin{remark}
If $g$ has finite order and acts on a variety $X$, then $a_V(g)$ 
is a locally constant function on $X^g$ which takes values in $\Q$.
\end{remark}
The following proposition is an easy generalization of Proposition 
\ref{prop.logtracepositive}.
\begin{prop} \label{prop.logtracekt} Let ${\bg} = (g_1, \ldots
g_l)$ be an $l$-tuple of elements which lie in a compact subgroup of
a reductive group $H$ and satisfy $g_1 \ldots g_l = 1$.  Let $V$ be
a $Z \times H$-equivariant vector bundle on $H$, where $H$ is assumed
to act trivially on $X$.  Then
$$\sum_{i =1}^l L(g_i)(V) - V + V^{\bg}$$ is a non-negative
element of $K_Z(X)$, where $V^{\bg}$ is subbundle of $V$ invariant
under the action of the $g_i$
for all $i\in\{1,\ldots,l\}$.
In particular, the map
$V \mapsto \sum_{i=1}^l L(g_i)(V) - V + V^{\bg}$ is an additive homomorphism 
$K_{Z \times H}(X) \to K_Z(X)$ 
which takes non-negative elements to non-negative elements.
\end{prop}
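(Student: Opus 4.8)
The plan is to reduce the statement to the vector-space inequality of Proposition~\ref{prop.logtracepositive} by decomposing $V$ into isotypic pieces for the fibrewise action. Let $K \subset H$ be a compact subgroup containing $g_1,\ldots,g_l$. Since $H$ --- and hence $K$ --- acts trivially on $X$, restricting the $Z\times H$-structure gives a $Z\times K$-equivariant bundle, and complete reducibility of $K$ yields an isotypic decomposition
$$V = \bigoplus_\rho V_\rho \otimes W_\rho,$$
where $W_\rho$ ranges over the irreducible representations of $K$, the multiplicity space $V_\rho = \Hom_K(W_\rho,V)$ is a $Z$-equivariant bundle on $X$, and $Z$ acts through the $V_\rho$ factor (because $Z$ commutes with $K$). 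All three operations appearing in the map $\Lambda\colon V \mapsto \sum_i L(g_i)(V) - V + V^{\bg}$ depend only on the $g_i$-action (which factors through $K$) together with the $Z$-action, so $\Lambda$ factors through the restriction $K_{Z\times H}(X) \to K_{Z\times K}(X)$. It therefore suffices to treat $V = V_0 \otimes W$ for a single irreducible $K$-representation $W$ and a $Z$-bundle $V_0$.

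For such a $V$ each $g_i$ acts only on the $W$ factor, so its eigenbundle decomposition is $V = \bigoplus_k V_0 \otimes W^{(i)}_k$, where $W = \bigoplus_k W^{(i)}_k$ is the eigenspace decomposition of $g_i$ on $W$ with eigenvalues $\exp(2\pi\sqrt{-1}\,\alpha^{(i)}_k)$, $0 \le \alpha^{(i)}_k < 1$. Forgetting the $K$-action, $V_0 \otimes W^{(i)}_k \cong V_0^{\oplus \dim W^{(i)}_k}$ in $K_Z(X)$, so that $L(g_i)(V) = \big(\sum_k \alpha^{(i)}_k \dim W^{(i)}_k\big)[V_0] = L(g_i)\,[V_0]$, where $L(g_i)$ now denotes the scalar logarithmic trace of $g_i$ acting on the vector space $W$. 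Likewise $V = (\dim W)[V_0]$ and $V^{\bg} = (\dim W^{\bg})[V_0]$. Hence
$$\Lambda(V) = \Big(\sum_{i=1}^l L(g_i) - \dim W + \dim W^{\bg}\Big)[V_0].$$

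Now apply Proposition~\ref{prop.logtracepositive} to the finite-dimensional vector space $W$ together with the automorphisms by which $g_1,\ldots,g_l$ act on it: these lie in the image of the compact group $K$ in $\GL(W)$ and their product is the identity. The scalar $c := \sum_i L(g_i) - \dim W + \dim W^{\bg}$ is therefore a non-negative integer, so $\Lambda(V) = c\,[V_0]$ is $c$ copies of an honest $Z$-bundle, i.e. a non-negative integral element of $K_Z(X)$. For the homomorphism claim, note that the multiplicity-space functor $V \mapsto V_\rho$ is exact (by complete reducibility of $K$-representations) and that the scalar $c = c_\rho$ depends only on $\rho$; thus $\Lambda([V]) = \sum_\rho c_\rho [V_\rho]$ is manifestly additive and descends to a homomorphism $K_{Z\times H}(X) \to K_Z(X)$ carrying non-negative classes to non-negative classes.

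I expect the only genuine subtlety to be the bookkeeping in the first paragraph, namely verifying that the a priori real-coefficient class $\sum_i L(g_i)(V) \in K_Z(X)\otimes\mathbb{R}$ is in fact integral and non-negative. The whole point is that after passing to a single isotypic component the various real eigenvalue weights $\alpha^{(i)}_k$ collapse, via Proposition~\ref{prop.logtracepositive}, into one integer multiplicity of the bundle $V_0$; the essential work has already been done in the linear-algebra statement of Falbel and Wentworth, and everything else is formal.
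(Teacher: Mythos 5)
Your proof is correct and follows essentially the same route as the paper: the paper's Lemma~\ref{lem.vbdecomp} (Segal's isotypic decomposition for a linearly reductive group acting trivially on the base) reduces the claim to a single irreducible factor $V_E\otimes E$, where Proposition~\ref{prop.logtracepositive} (Falbel--Wentworth) supplies the non-negative integer multiplicity, exactly as in your argument. The only deviation is that you decompose with respect to the compact subgroup $K$ rather than the reductive algebraic group $H$ itself; in the algebraic category this needs the extra remark that the $K$-isotypic pieces are algebraic $Z$-subbundles (true here because $K$-invariance coincides with invariance under its Zariski closure, a reductive algebraic subgroup of $H$), a point the paper sidesteps by decomposing under $H$ directly.
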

To prove the proposition, we first need the following lemma.

\begin{lemma}(cf. \cite[Prop 2.2]{Seg:68b}) \label{lem.vbdecomp}
Let $Z$ be an algebraic group acting 
on
an algebraic space $X$ and 
let $H$ be a linearly reductive group acting trivially on $X$. 
Then any $Z \times H$-equivariant vector bundle $V \to X$ has a canonical 
decomposition 
as a direct sum 
$\bigoplus_{E} {V_E} \otimes E$, where $E$ runs over the set of irreducible $H$-modules and $V_E$ is a $Z$-bundle.
In particular, $K_{Z \times H}(X)$
is canonically isomorphic to $K_Z(X) \otimes \Rep(H)$
\end{lemma}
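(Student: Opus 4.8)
The plan is to build the decomposition fiberwise using linear reductivity of $H$ and then verify that the fiberwise isotypic pieces assemble into $Z$-equivariant subbundles. Since $H$ acts trivially on $X$, its action on $V$ is by $\Oc_X$-linear bundle automorphisms lying over the identity of $X$, and this action commutes with that of $Z$. For each irreducible $H$-module $E$ I would set $V_E := \Hom_H(E,V)$, taking the $H$-equivariant sheaf of homomorphisms, equivalently the image of an isotypic projector $p_E$ acting on $V$. The evaluation maps $V_E \otimes_\C E \to V$ then assemble into a canonical morphism $\bigoplus_E V_E \otimes_\C E \to V$, and the goal is to show this is an isomorphism of $Z$-equivariant bundles with only finitely many $E$ contributing; finiteness is automatic because $V$ has finite rank.

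First I would produce the isotypic projectors $p_E \in \mathrm{End}_{\Oc_X}(V)$. The $H$-comodule structure on $V$ is an $\Oc_X$-linear map $\rho \colon V \to V \otimes_\C \Oc(H)$, and composing $\rho$ with the functional on $\Oc(H)$ that integrates against the character of $E$ (the algebraic Reynolds-type operator available because $H$ is linearly reductive in characteristic $0$) yields an $\Oc_X$-linear idempotent endomorphism $p_E$ of $V$. Because $p_E$ is built entirely from the $H$-action and the $Z$-action commutes with $H$, each $p_E$ is $Z$-equivariant, and the family $\{p_E\}$ forms a complete orthogonal system of idempotents with $\sum_E p_E = \mathrm{id}$ and only finitely many terms nonzero on each connected component of $X$.

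The main obstacle is local freeness: I must show that the image $V^{(E)} = p_E(V)$ is a subbundle, i.e.\ locally a direct summand of $V$. This follows from the fact that $p_E$ is an $\Oc_X$-linear idempotent, since over an affine chart on which $V$ is free it is represented by an idempotent matrix over the coordinate ring, whose image is a finitely generated projective, hence locally free, module. Thus each $V^{(E)}$ is a $Z$-equivariant subbundle, the decomposition $V = \bigoplus_E V^{(E)}$ holds, and writing $V^{(E)} \cong V_E \otimes_\C E$ with $V_E = \Hom_H(E, V^{(E)})$ exhibits $V_E$ as a $Z$-bundle (it is a summand under the action of $\mathrm{End}_H(E) = \C$).

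Finally I would record canonicality and the $K$-theoretic consequence. The assignment $V \mapsto (V_E)_E$ is functorial for $Z \times H$-equivariant morphisms and additive on short exact sequences, since the formation of $H$-invariants is exact for $H$ linearly reductive, so it descends to a homomorphism $K_{Z \times H}(X) \to K_Z(X) \otimes \Rep(H)$ sending $[V] \mapsto \sum_E [V_E] \otimes [E]$. The reassembly $\bigoplus_E V_E \otimes_\C E$ provides a two-sided inverse, giving the asserted canonical isomorphism $K_{Z \times H}(X) \cong K_Z(X) \otimes \Rep(H)$. This mirrors Segal's argument \cite{Seg:68b} in the topological setting, the only adaptation being the use of algebraic linear reductivity in place of compactness to produce the projectors.
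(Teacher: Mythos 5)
Your proof is correct and takes essentially the same approach as the paper: both define $V_E = \Hom_H(E,V)$ (the $H$-invariant sheaf Hom out of the trivial bundle with fiber $E$) and show the canonical evaluation map $\bigoplus_E V_E \otimes E \to V$ is an isomorphism of $Z\times H$-bundles. The only difference is one of detail: the paper delegates the isomorphism and local-freeness claims to Segal's Proposition 2.2, whereas you fill them in directly via the Reynolds-operator isotypic projectors, which is precisely the algebraic adaptation of Segal's compact-group argument that the paper has in mind.
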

\begin{proof}[Proof of Proposition~\ref{lem.vbdecomp}]
The proof is more or less identical to the proof given in \cite{Seg:68b}.
If $E$ is an $H$-module, then we can consider the $Z \times H$-module
$ {\mathscr O}_X \otimes E$. 
If $V$ is a $Z \times H$-module, then 
${\mathscr Hom}({\mathscr O}_X \otimes E, V)$ has a natural structure
as $Z \times H$-module. Let $V_E$ be the invariant subbundle
$${\mathscr Hom}({\mathscr O}_X \otimes E,V)^{1 \times H}.$$
There is a natural
map of $Z\times H$-vector bundles $V_E \otimes E \to V$.
As noted in the 
proof
of Proposition 2.2 of \cite{Seg:68b}
the map 
$\bigoplus_E E \otimes V_E \to V$ is an isomorphism, where the sum runs over all irreducible
$H$-modules $E$. 
\end{proof}
\begin{proof}[Proof of Proposition~\ref{prop.logtracekt}]
By Lemma~\ref{lem.vbdecomp} we may assume that $V = V_E \otimes E$, where
$V_E$ is a $Z$-module and $E$ is an irreducible representation of $H$.
Thus $L(g)(V) =  (L(g)(E))(V_E)$.  By
Proposition~\ref{prop.logtracepositive}, $\sum_{i = 1}^l L(g)(E) \geq \dim E -
\dim E^g$, and the proposition follows.
\end{proof}

\begin{remark} \label{rem.canonical}
A priori the class $\sum_{i = 1}^l L(g_i)(V)$ is only well defined up
to torsion. However, the proof of Proposition~\ref{prop.logtracekt}
shows that the there is a canonical choice of integral representative.
More precisely, if $V = \sum_E V_E \otimes E$, then $\sum_{i=1}^l
L(g_i)(V)$ is represented by the integral class $\sum_E (\sum_{i=1}^l
L(g_i)(E))[V_E]$
\end{remark}

\section{Logarithmic restriction and twisted pullback bundles}
\label{sec.logres}

\textbf{Background hypotheses.}
Throughout this section, we will assume that $G$ is an
algebraic
group acting on a \emph{smooth} algebraic space $X$.  There is no 
restriction on the action of $G$ on $X$.   We let $\bm := (m_1,
\ldots , m_l)$ be an $l$-tuple of elements (not necessarily of finite order) 
which lie in a compact subgroup of $K \subset G$ and 
satisfy $m_1 \ldots m_l =1$.  
Let $X^{{\bm}} = \bigcap_{i=1}^l X^{m_i}$ 
be the fixed locus of the $m_i$.  We let $\Phi(\bm)\in\G{l}$ be the diagonal conjugacy class of the tuple $\bm$.\\

In this section, we use the logarithmic trace to construct a \emph{twisted
  pullback map} map $K_G(X) \to K_G( \Imult{l}(\Phi(\bm))$.
The twisted pullback map takes non-negative 
elements to non-negative elements and depends only on the
conjugacy class $\Phi(\bm)$

In Section~\ref{sec.orbifoldproduct} we apply this construction
when $G$ acts with finite stabilizer to define a twisted pullback map
$K_G(X) \to K(\II_G(X))$. The Euler class of the twisted
pullback of the tangent bundle ${\T}$, corresponding to the
tangent bundle of $[X/G]$, produces a class 
$c \in A^*_G(\II_G(X))$ 
which satisfies the conditions necessary for the $\star_c$
product to be a commutative and associative.

\subsection{The logarithmic restriction map in equivariant
  $K$-theory} \label{subsec.logres} 
There is an action of $Z = Z_G(\bm)$  on $X^{\bm}$, and if $V$ is $G$-bundle on $X$, then $V|_{X^{\bm}}$ has a canonical $Z$-action which necessarily commutes with the action of the $m_i$
on the fibers of $V|_{X^{\bm}}$. 
\begin{lemma} \label{lem.zarclosure}
Let  $H \subset G$ 
be the Zariski closure of
the group generated by the $m_i$.
 Then $H$ also acts trivially on
$X^{{\bm}}$.
\end{lemma}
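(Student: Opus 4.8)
The plan is to show that ``$g$ acts trivially on $X^{\bm}$'' cuts out a Zariski-closed subgroup of $G$; since each $m_i$ evidently satisfies this condition, so will every element of the Zariski closure $H$ of the group they generate. First I would record the easy half: because $X^{\bm} = \bigcap_{i=1}^l X^{m_i} \subseteq X^{m_i}$ for each $i$, every $m_i$ fixes every point of $X^{\bm}$, and hence so does every element of the abstract subgroup $\Gamma := \langle m_1, \ldots, m_l \rangle$ generated by them. Thus $\Gamma$ is contained in the subset
$$H_0 := \{g \in G \mid g\cdot x = x \text{ for all } x \in X^{\bm}\},$$
which is manifestly a subgroup of $G$.

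The key step is to verify that $H_0$ is Zariski closed. For this I would write $H_0 = \bigcap_{x} \Stab_G(x)$, the intersection taken over all closed points $x \in X^{\bm}$. Each stabilizer $\Stab_G(x)$ is closed in $G$, since it is the preimage of the closed point $x$ under the orbit morphism $G \to X$, $g \mapsto g\cdot x$ (equivalently, the fiber over $x$ of the projection $I_G(X) \to X$). An arbitrary intersection of closed subsets is closed, so $H_0$ is a closed subgroup of $G$. Since $\Gamma \subseteq H_0$ and $H_0$ is closed, its Zariski closure satisfies $H = \overline{\Gamma} \subseteq H_0$, which is exactly the assertion that $H$ acts trivially on $X^{\bm}$.

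The only point needing care --- the ``main obstacle,'' such as it is --- is the closedness of $H_0$, which rests on the fact that point-stabilizers are closed subgroups. This is where the representability of the inertia $I_G(X) \to X$ noted in the Background is used, so that the argument applies to algebraic spaces and not merely to schemes. I would also remark that since $H$ is a Zariski closure it is reduced, and we work over $\C$, so checking triviality of the action on closed points suffices to conclude that the action morphism $H \times X^{\bm} \to X^{\bm}$ is the projection.
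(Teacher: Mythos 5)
Your proof is correct and is essentially the same as the paper's: the paper argues that for each $x \in X^{\bm}$ the isotropy group $G_x$ is a closed (algebraic) subgroup containing every $m_i$, hence containing $H$, which is exactly your observation that the Zariski closure of $\langle m_1,\ldots,m_l\rangle$ lands inside the closed stabilizers. Your version merely intersects the stabilizers into a single closed subgroup $H_0$ before invoking closure, a cosmetic rearrangement of the same argument.
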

\begin{proof}
For every point $x \in X^{{\bm}}$, the isotropy group $G_x$
contains each $m_i$. Since $G_x$ is an algebraic subgroup
of $G$, it must also contain $H$. Therefore, $H$ acts trivially on
$X^{{\bm}}$.
\end{proof}
\begin{lemma} \label{lem.commute}
The subgroups $Z$ and $H$ commute.
\end{lemma}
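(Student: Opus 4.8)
The plan is to reduce the statement to the single fact that centralizers in a linear algebraic group are Zariski closed. By the definition of $Z = Z_G(\bm) = \bigcap_{i=1}^l Z_G(m_i)$, every element $z \in Z$ commutes with each $m_i$; the goal is to promote this into commutation with the \emph{entire} Zariski closure $H$ of the abstract group $\langle m_1, \ldots, m_l \rangle$ (the group introduced in Lemma~\ref{lem.zarclosure}).

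First I would fix an arbitrary element $z \in Z$ and consider its centralizer $Z_G(z) = \{g \in G \mid gz = zg\}$. This is a Zariski closed subgroup of $G$: since $G$ is linear we may regard $Z_G(z)$ as the locus cut out by the matrix equation $gz - zg = 0$, equivalently as the fixed-point set of the conjugation automorphism $c_z \colon g \mapsto z g z^{-1}$, which is closed. Because $z$ commutes with each generator $m_i$, the closed subgroup $Z_G(z)$ contains all of the $m_i$, and hence contains the abstract subgroup $\langle m_1, \ldots, m_l \rangle$ they generate. As $Z_G(z)$ is closed, it must also contain the Zariski closure $H$. Therefore $z$ commutes with every element of $H$.

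Since this argument applies verbatim to every $z \in Z$, we conclude that every element of $Z$ commutes with every element of $H$, which is exactly the assertion that the subgroups $Z$ and $H$ commute.

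The only subtlety—and the step I expect to require the most care—is the passage from commutation with the finitely many generators $m_i$ to commutation with the full Zariski closure. This is precisely where the closedness of centralizers is indispensable: commuting with a set of generators is an algebraic (closed) condition, so it automatically extends to the closure of the subgroup they generate, and no further hypothesis on $G$ beyond linearity is needed.
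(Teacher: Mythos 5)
Your proof is correct. It rests on the same underlying principle as the paper's proof---commutation is a Zariski-closed condition, so it propagates from the generators $m_1,\ldots,m_l$ to their Zariski closure $H$---but the mechanics differ. The paper considers the commutator morphism $Z \times H \to G$, $(z,h)\mapsto [z,h]$, observes that it is constant on the dense subset $Z \times \langle m_1,\ldots,m_l\rangle$, and concludes that it is identically trivial; this implicitly uses that $Z\times\langle m_1,\ldots,m_l\rangle$ is dense in $Z\times H$ and that a morphism constant on a dense subset of a reduced source (into a separated target) is constant. Your version works one element at a time: fixing $z\in Z$, you invoke only the closedness of the centralizer $Z_G(z)$, which, being a closed subgroup containing each $m_i$, must contain all of $H$. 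This is slightly more elementary---it avoids both the density-of-a-product point and the constancy-on-a-dense-set fact---and, as you note, uses nothing about $G$ beyond linearity (indeed, only separatedness of $G$, which makes $Z_G(z)$ closed as the fixed locus of conjugation by $z$). Either argument proves the lemma in full.
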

\begin{proof}
The commutator map $Z \times H \to G$, $(z,h)\mapsto [z,h]$
is constant on the dense subset $Z \times 
\langle m_1, \ldots m_l\rangle$, so the image of the commutator map is constant. Hence $Z$ and $H$ are commuting subgroups of $G$.
\end{proof}
As a consequence of Lemmas~\ref{lem.zarclosure} and~\ref{lem.commute}
the restriction $V|_{X^{{\bm}}}$ has a natural $Z \times H$-module structure. 
\begin{defi}
Given a $G$-equivariant vector bundle $V$ define the \emph{logarithmic restriction} of $V$ to be the class $K_Z(X^{{\bm}})$ defined by the formula
\begin{equation}  \label{eq.logrestr}
V({\bm}) = \sum_{i=1}^l
L(m_i)(V|_{X^{\bm}}) + V^{{\bm}} - V|_{X^{\bm}}.
\end{equation}

By Proposition~\ref{prop.logtracekt} and Remark~\ref{rem.canonical} $V({\bm})$  is a non-negative integral element
in $K_Z(X^{{\bm}})$.  
The assignment $V \mapsto V({\bm})$ extends linearly to give a map
$K_G(X) \to K_Z(X^{\bm})$ taking non-negative elements to non-negative elements. We refer to this map as the \emph{logarithmic restriction map.}
\end{defi}

\subsection{The twisted pullback in equivariant $K$-theory} \label{subsec.twistpullback}
The map $G \times X^{\bm } \to \Imult{l}(\Phi)$
given by 
$(h,y) \mapsto (h y, h m_1h^{-1}, \ldots,  h m_l h^{-1}) $
gives an identification $\Imult{l}(\Phi) = G \times_Z X^{{\bm}}$.  Hence by Morita
equivalence, we may identify $K_Z(X^{{\bm}})$ with
$K_G(\Imult{l}(\Phi))$. Let $V(\Phi) \in K_G(\Imult{l}(\Phi))$ be
the class identified with $V({\bm})$.
\begin{lemma} \label{lem.indep}
The class $V(\Phi) \in K_G(\Imult{l}(\Phi))$ is independent of the choice of
representative
$(m_1, \ldots , m_l) \in \Phi$.
\end{lemma}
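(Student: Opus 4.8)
The plan is to reduce the statement to the comparison of two representatives related by a single conjugation, and then to check that every ingredient entering the definition~\eqref{eq.logrestr} of the logarithmic restriction is natural with respect to that conjugation, after which compatibility of the two Morita identifications finishes the argument. Concretely, suppose $\bm = (m_1, \ldots, m_l)$ and $\bm' = (m_1', \ldots, m_l')$ are two representatives of $\Phi$. Since they lie in the same diagonal conjugacy class there is a $g \in G$ with $\bm' = g \cdot \bm = (g m_1 g^{-1}, \ldots, g m_l g^{-1})$. Writing $c_g \colon X \to X$ for the automorphism $x \mapsto g x$, one checks directly that $c_g$ restricts to an isomorphism $c_g \colon X^{\bm} \to X^{\bm'}$ (if $m_i y = y$ then $m_i'(g y) = g m_i y = g y$), that $Z' := Z_G(\bm') = g Z g^{-1}$, and that conjugation by $g$ is a group isomorphism $Z \to Z'$ intertwining the two actions; together these induce an isomorphism $K_Z(X^{\bm}) \to K_{Z'}(X^{\bm'})$.

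First I would show that under this isomorphism the two logarithmic restriction classes correspond, that is, $V(\bm)$ is carried to $V(\bm')$. The essential point is that the $G$-equivariant structure on $V$ supplies an isomorphism $c_g^* V \cong V$ that intertwines the action of $m_i$ on the fibers of $V|_{X^{\bm}}$ with the action of $m_i' = g m_i g^{-1}$ on the fibers of $V|_{X^{\bm'}}$. Consequently the two eigenbundle decompositions match up, carrying the $m_i$-eigenbundle $V_k \subset V|_{X^{\bm}}$ with eigenvalue $\exp(2\pi\sqrt{-1}\alpha_k)$ to the $m_i'$-eigenbundle of $V|_{X^{\bm'}}$ having the \emph{same} eigenvalue, and hence the same $\alpha_k$. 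Thus each term $L(m_i)(V|_{X^{\bm}})$ corresponds to $L(m_i')(V|_{X^{\bm'}})$, together with its canonical integral representative from Remark~\ref{rem.canonical}, while $V|_{X^{\bm}}$ corresponds to $V|_{X^{\bm'}}$ and the invariant subbundle $V^{\bm}$ to $V^{\bm'}$. Summing the three terms in \eqref{eq.logrestr} then shows that $V(\bm)$ and $V(\bm')$ correspond.

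Finally I would verify that the two Morita identifications of these $K$-groups with $K_G(\Imult{l}(\Phi))$ agree with the conjugation isomorphism of the first paragraph. For this I would introduce the map $\phi \colon G \times X^{\bm} \to G \times X^{\bm'}$ given by $(h,y) \mapsto (h g^{-1}, g y)$, and check that it is $G$-equivariant for left multiplication, that it carries the $Z$-torsor structure to the $Z'$-torsor structure via $z \mapsto g z g^{-1}$, and that it is compatible with the two torsor projections of Lemma~\ref{lem.sphi} onto $\Imult{l}(\Phi)$. Indeed, a direct substitution gives $(h g^{-1}) \cdot (g y) = h y$ and $(h g^{-1}) m_i' (h g^{-1})^{-1} = h m_i h^{-1}$, so $\phi$ descends to the identity map on $\Imult{l}(\Phi)$. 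Hence the presentations $\Imult{l}(\Phi) = G \times_Z X^{\bm}$ and $\Imult{l}(\Phi) = G \times_{Z'} X^{\bm'}$ are identified compatibly with conjugation by $g$, and therefore $V(\bm)$ and $V(\bm')$ define the same class $V(\Phi) \in K_G(\Imult{l}(\Phi))$.

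I expect the main obstacle to be keeping track of the equivariant structures precisely enough to justify the eigenbundle matching in the second step. One must use the $G$-equivariant isomorphism $c_g^* V \cong V$ not merely as an isomorphism of vector bundles but as one intertwining the fiberwise unitary automorphisms $m_i$ and $m_i'$, so that equal eigenvalues, and hence equal $\alpha_k \in [0,1)$, are genuinely preserved rather than merely permuted. Once this is pinned down, the remaining verifications are the routine torsor and equivariance computations indicated above.
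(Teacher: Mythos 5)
Your proposal is correct and follows essentially the same route as the paper's own proof: both rest on the fact that conjugation preserves the eigenvalues (hence the $\alpha_k$'s and the eigenbundle decomposition entering \eqref{eq.logrestr}), combined with the commutative triangle of torsors over $\Imult{l}(\Phi)$ --- your map $\phi\colon (h,y)\mapsto (hg^{-1},gy)$ is exactly the paper's isomorphism $k_g$ --- which shows the two Morita identifications are compatible. The only difference is organizational: the paper pulls $V$ back to $\Imult{l}(\Phi)$ and checks the eigenbundles descend to the same $G$-bundle, while you transport the class through an explicit conjugation isomorphism $K_Z(X^{\bm})\to K_{Z'}(X^{\bm'})$ and then match the identifications, which amounts to the same verification.
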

\begin{proof}
Let
$${\bg} = (g_1, \ldots , g_l) = (h^{-1}m_1h, h^{-1}m_2 h, \ldots, h^{-1}m_l h)$$
be another $l$-tuple in $\Phi$. Then 
$h X^{\bg} = X^{\bm}$ and $h Z_{{\bg}} h^{-1} = Z_{{\bm}}$.  
Let $p\colon \Imult{l}(\Phi)
\to X$ be the map induced by the projection 
$G^l \times X \to X$. 
By
Lemma~\ref{lem.morequivincl}, the bundle $p^*V$ on $\Imult{l}(\Phi)$ is
equivalent to the bundle $V|_{X^{\bm}}$ (resp. $V|_{X^{\mathbf
g}}$ ) under the Morita equivalences between the category of $G$-modules on 
$\Imult{l}(\Phi)$ and the category of $Z_{\bm}$-modules (resp. $Z_{\bg}$-modules)
on $X^{\bm}$ (resp. $X^{\bg}$). 

Since $g_i$ is conjugate to $m_i$, the eigenvalues for the action of
$g_i$ on $V|_{X^{{\bg}}}$ are the same as the eigenvalues for the
action of $m_i$ on $V|_{X^{{\bm}}}$. If $\alpha$ is an
eigenvalue, let $V_{\alpha,m_i}$ be the $\alpha$-eigenbundle
of $V|_{X^{{\bm}}}$ and $V_{\alpha,g_i}$ the
$\alpha$-eigenbundle of $V|_{X^{\bg}}$. To complete the
proof it suffices to show that $V_{\alpha,m_i}$ and $V_{\alpha,
g_i}$ identify with the same $G$-bundle on $\Imult{l}(\Phi)$.

This can be seen as follows.
Consider the commutative triangle
$$\begin{array}{ccc}
G \times X^{{\bg}} &  \stackrel{k_g} \to& G \times X^{{\bm}}\\
\searrow & & \swarrow\\
 & \Imult{l}(\Phi) & 
\end{array},
$$
where the horizontal map $k_g$ is the isomorphism given by $(h,x)\mapsto (ghg^{-1},gx)$. If $W$ is a $G
\times Z_{\bm}$-module on $G \times X^{{\bm}}$ 
then the $1 \times Z_{{\bm}}$-invariant subsheaf of $W$ is the same as the $1 \times Z_{{\bg}}$-invariant subsheaf of $k^*W$. Since 
$V_{\alpha,g_i} = k_g^*V_{\alpha,m_i}$ the lemma follows.
\end{proof}

As an immediate consequence of Lemma~\ref{prop.logtracepositive} 
and the definition we obtain the following proposition.
\begin{prop} \label{prop.twistedpullbacK}
The map $K_G(X) \to K_G(\Imult{l}(\Phi))$, $V\mapsto V(\Phi)$ takes non-negative elements to non-negative elements.
\end{prop}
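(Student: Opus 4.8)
The plan is to recognize that this proposition is the final bookkeeping step assembling work already completed: all of the analytic content has been absorbed into Proposition~\ref{prop.logtracekt}, and what remains is to track non-negativity through the Morita equivalence. By construction the map $V \mapsto V(\Phi)$ factors as the logarithmic restriction map $K_G(X) \to K_Z(X^{\bm})$, $V \mapsto V(\bm)$, followed by the Morita equivalence isomorphism $K_Z(X^{\bm}) \stackrel{\sim}{\to} K_G(\Imult{l}(\Phi))$ arising from the identification $\Imult{l}(\Phi) = G \times_Z X^{\bm}$ furnished by Lemma~\ref{lem.sphi}. Thus it suffices to show separately that each of these two maps carries non-negative elements to non-negative elements.

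First I would dispatch the logarithmic restriction map. For a single $G$-equivariant vector bundle $V$, Lemmas~\ref{lem.zarclosure} and~\ref{lem.commute} equip $V|_{X^{\bm}}$ with a $Z \times H$-module structure, where $H$ is the Zariski closure of $\langle m_1,\ldots,m_l\rangle$; since the $m_i$ lie in a compact subgroup, $H$ is the Zariski closure of a subset of a compact group and is therefore reductive (linearly reductive in characteristic $0$), and it acts trivially on $X^{\bm}$. Proposition~\ref{prop.logtracekt} then applies directly and shows that $V(\bm) = \sum_{i=1}^l L(m_i)(V|_{X^{\bm}}) + V^{\bm} - V|_{X^{\bm}}$ is a non-negative integral element of $K_Z(X^{\bm})$, with the canonical integral representative singled out in Remark~\ref{rem.canonical}. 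Because an arbitrary non-negative element of $K_G(X)$ is a positive integral sum of classes of vector bundles and the restriction map is additive, its image is a positive integral sum of non-negative elements, hence non-negative.

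The remaining point, and the one I would treat most carefully, is that the Morita equivalence isomorphism preserves non-negativity. The content here is not merely that we have an isomorphism of Grothendieck groups, but that it is induced by an honest equivalence between the category of $Z$-equivariant vector bundles on $X^{\bm}$ and the category of $G$-equivariant vector bundles on $G \times_Z X^{\bm} = \Imult{l}(\Phi)$, as recalled in the discussion of Morita equivalence in the equivariant $K$-theory subsection. Under such an equivalence the class of a genuine vector bundle is sent to the class of a genuine vector bundle, so positive integral sums of bundle classes are sent to positive integral sums of bundle classes; by additivity of the isomorphism, non-negative elements map to non-negative elements.

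Composing the two steps shows that $V \mapsto V(\Phi)$ carries non-negative elements to non-negative elements, as claimed; well-definedness independent of the chosen representative is already guaranteed by Lemma~\ref{lem.indep}. The hard part is essentially behind us—it lives in the eigenvalue inequality of Falbel and Wentworth underlying Proposition~\ref{prop.logtracekt}—so the only genuine subtlety in this proof is verifying that the Morita identification is categorical (sending bundles to bundles) rather than a bare isomorphism of $K$-groups.
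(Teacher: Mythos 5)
Your proof is correct and follows essentially the same route as the paper: the statement is, as the paper says, an immediate consequence of Proposition~\ref{prop.logtracekt} (whose content is the Falbel--Wentworth inequality) together with the definition of $V(\Phi)$ as the logarithmic restriction followed by Morita equivalence, the latter preserving non-negativity because it arises from an honest equivalence of categories of equivariant locally free sheaves, as recalled in the paper's background section. Your additional care in verifying that the Zariski closure $H$ is linearly reductive and in spelling out the additivity step is a sound elaboration of points the paper leaves implicit, not a different argument.
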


\begin{defi}
We refer to the map $K_G(X) \to K_G(\Imult{l}(\Phi))$ as the \emph{twisted
  pullback map}. 

The twisted pullback $K_G(X) \to
K_G(\Imult{l}(\Phi))$ is the composition of the logarithmic restriction
$K_G(X) \to K_Z(X^{\bm})$ with the Morita equivalence
identification $K_Z(X^{\bm}) = K_G(\Imult{l}(\Phi))$.
\end{defi}

\begin{remark} \label{rem.canononical2} As noted in Remark
\ref{rem.canonical}, if $V$ is a $G$-equivariant vector bundle on $X$, then
there is a canonical choice of a $Z$-bundle on $X^{{\bm}}$
whose class represents $\sum_{i=1}^l L(m_i)(V|_{X^{{\bm}}})$.
As a result there is a also a canonical choice of representative for
the class $V({{\bm}})$ (and hence $V(\Phi)$). 

More precisely,
if
$V|_{X^{{\bm}}}$ decomposes as 
$(V^{{\bm}} \otimes {\mathbf 1}) \oplus \bigoplus_{E} V_E \otimes E$ 
where ${\mathbf 1}$ is the
trivial representation of the group $H =\langle {\bm} \rangle$ and the
direct sum is over all non-trivial irreducible representations, then
$V(\bm)$ 
is represented by the bundle 
$\bigoplus_E V_E^{\oplus r_E}$ 
where 
$r_E$ is the non-negative integer
$\sum_{i=1}^l (L(m_i)(E)-\dim E)$
and the sum is over all non-trivial
irreducible representations of $H$. We will also use the notation
$V({\bm})$ (resp. $V(\Phi)$) to refer to the corresponding
bundles on $X^{\bm}$ (resp. $\Imult{l}(\Phi)$).
\end{remark}

\subsection{
Identities
for logarithmic restrictions}
The following 
identities
will be helpful in proving the associativity of
the 
products we are most interested in, namely, 
the $\star_{c_\T}$ and $\star_{\kclass_\T}$ products. 

Let $m_1, m_2, m_3$ be elements of $G$ which lie in
a common compact subgroup $K$ and let $m_4 = (m_1 m_2 m_3)^{-1}$.  
Consider the tuples ${\bm_{1,2}}= (m_1,m_2,(m_1m_2)^{-1})
$, 
${\bm_{2,3}} = (m_2,m_3, 
(m_2m_3)^{-1})$, 
${\bm_{12,3}} = (m_1m_2, m_3, m_4)$,
and ${\bm_{1,23}} = (m_1, m_2m_3, m_4)$. Each of
these tuples lies in $K$, so the logarithmic restriction is defined
for each tuple.  Let ${\bm} = (m_1, m_2, m_3,
m_4)$. There is a natural inclusion of $X^{\bm}$
into the fixed locus of each of the tuples defined above. This
inclusion induces restriction maps in equivariant $K$-theory
$K_{Z_{i,j}}(X^{{\bm_{i,j}}}) \to K_Z(X^{\bm})$, etc., where
$Z_{i,j} = Z_G(m_i,m_j)$. We will abuse notation and indicate the
restriction of a class in $K_{Z_{i,j}}(X^{{\bm}_{i,j}})$
to
$K_Z(X^{\bm})$ 
by
the same name.
\begin{lemma} \label{lem.v123}
Let $V$ be a $G$-bundle on $X$. 
Then following identities hold in $K_Z(X^{\bm})$.
\begin{eqnarray} 
  V({\bm_{1,2}}) + V({\bm_{12,3}}) 
& = & 
\sum_{i=1}^4 L(m_i)(V) + V^{{\bm}_{1,2}} 
+ V^{{\bm}_{12,3}} - V^{m_1m_2} - V \label{eq.v12123} \\ 
  V({\bm_{2,3}}) + V({\bm_{1,23}}) 
& = & 
\sum_{i=1}^4 L(m_i)(V) + V^{{\bm}_{2,3}} 
+ V^{{\bm}_{1,23}} - V^{m_2m_3} - V \label{eq.v23123}
\end{eqnarray}
Similarly, for all $r\geq 4$, $\bm := (m_1,\ldots,m_r)$ in $K^r$ such that $\prod_{i=1}^{r}  m_i = 1$, then for all $j=2,\ldots, r-2$, set $m := \prod_{i=1}^j m_i$.
We have the equality
\begin{equation}\label{eq:SumOfVs}
V(\bm) = V(m_1,\ldots,m_j, m^{-1}) + V(m,m_{j+1},\ldots,m_{r}) + \ce_j(V)(\bm)
\end{equation}
in $K_{Z_G(\bm)}(X^\bm)$, where 
\begin{equation}\label{eq:ExcessV}
\ce_j(V)(\bm) := V^\bm - V^{(m_1,\ldots,m_j,m^{-1})} - V^{(m,m_{j+1},\ldots,m_{r})} + V^m.
\end{equation}
\end{lemma}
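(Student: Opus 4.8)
The plan is to reduce everything to a single complementarity identity for the logarithmic trace: for any unitary automorphism $g$ of the fibers of a $Z$-equivariant bundle $W$ whose action commutes with that of $Z$, one has
$$L(g)(W) + L(g^{-1})(W) = W - W^g$$
in $K_Z(X)$. This follows immediately from the eigenbundle definition \eqref{eq.logtracebund}. Writing $W = \bigoplus_k W_k$ for the decomposition into $g$-eigenbundles with eigenvalues $\exp(2\pi\sqrt{-1}\,\alpha_k)$, $0\le\alpha_k<1$, the element $g^{-1}$ acts on $W_k$ with eigenvalue $\exp(2\pi\sqrt{-1}(1-\alpha_k))$ when $\alpha_k\neq 0$ and with eigenvalue $1$ when $\alpha_k=0$. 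Hence $L(g^{-1})(W) = \sum_{\alpha_k\neq 0}(1-\alpha_k)W_k$, and adding the two expressions collapses the coefficients to give $\sum_{\alpha_k\neq 0} W_k = W - W^g$. I would record this as a one-line preliminary observation, as it is the only nontrivial input.

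Before computing, I would fix the ambient group in which all three displayed equalities are to hold, namely $K_Z(X^{\bm})$ with $Z = Z_G(\bm)$, and note the compatibility needed to land there. Each logarithmic restriction $V(\bm_{i,j})$ is \emph{a priori} defined via \eqref{eq.logrestr} as an element of $K_{Z_{i,j}}(X^{\bm_{i,j}})$, but the statement restricts every term to $X^{\bm}$. The point is that the formation of eigenbundles, and hence each operator $L(m_i)(-)$, commutes with restriction of the bundle to the smaller fixed locus $X^{\bm}\subseteq X^{\bm_{i,j}}$, since eigenbundles pull back to eigenbundles; likewise each invariant subbundle $V^{\bm_{i,j}}$ restricts to its evident restriction (for instance $V^{\bm_{1,2}}=V^{m_1}\cap V^{m_2}$, the third entry being a word in the first two). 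With these identifications the remaining argument is purely formal.

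For \eqref{eq.v12123} I would simply add the expansions of $V(\bm_{1,2})$ and $V(\bm_{12,3})$ given by \eqref{eq.logrestr}. The logarithmic-trace terms total $\sum_{i=1}^{4} L(m_i)(V)$ together with the single complementary pair $L(m_1m_2)(V) + L((m_1m_2)^{-1})(V)$; applying the complementarity identity with $g=m_1m_2$ replaces this pair by $V - V^{m_1m_2}$. Collecting the two invariant subbundles $V^{\bm_{1,2}}$, $V^{\bm_{12,3}}$ and the two $-V$ contributions then yields exactly the right-hand side. Equation \eqref{eq.v23123} is the identical computation with the pair $(m_2m_3,(m_2m_3)^{-1})$ in place of $(m_1m_2,(m_1m_2)^{-1})$.

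The general identity \eqref{eq:SumOfVs} is the same computation with $m=\prod_{i=1}^{j}m_i$ playing the role of $m_1m_2$. Adding the expansions of $V(m_1,\ldots,m_j,m^{-1})$ and $V(m,m_{j+1},\ldots,m_r)$ produces $\sum_{i=1}^{r}L(m_i)(V)$ plus the complementary pair $L(m)(V)+L(m^{-1})(V)=V-V^m$, together with the two invariant subbundles and $-2V$; subtracting $V(\bm) = \sum_{i=1}^{r}L(m_i)(V) + V^{\bm} - V$ isolates precisely the excess term $\ce_j(V)(\bm)$ of \eqref{eq:ExcessV}, after which a rearrangement gives \eqref{eq:SumOfVs}. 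I expect no genuine obstacle here: the entire mathematical content sits in the complementarity identity of the first paragraph, and the only thing demanding care is the bookkeeping — tracking which fixed subbundle each symbol denotes and checking that every logarithmic trace and every invariant subbundle is taken on $X^{\bm}$, so that all three equalities are asserted consistently in the single group $K_Z(X^{\bm})$.
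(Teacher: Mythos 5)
Your proposal is correct and is essentially the paper's own proof: the paper disposes of the lemma in one line by invoking the definition \eqref{eq.logrestr} of the logarithmic restriction together with the complementarity identity $L(g)(V)+L(g^{-1})(V)=V-V^{g}$, which is exactly the computation you carry out. Your additional details --- the eigenbundle verification of the complementarity identity and the observation that logarithmic traces and invariant subbundles commute with restriction to the smaller fixed locus $X^{\bm}$ --- merely make explicit what the paper leaves implicit.
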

\begin{proof}
The proof follows from the definition of the restricted pullback in terms of the logarithmic trace and the identity
$L(g)(V)+ L(g^{-1})(V) = V - V^g$.
\end{proof}

\section{The twisted product} \label{sec.orbifoldproduct} 

\textbf{Background hypotheses.} In Section \ref{subsec.twistedpullback} we assume that all group
actions are quasi-free. In Section \ref{subsec.twistedproduct} we assume that all group actions have finite stabilizer.
When working with equivariant Chow groups we  assume that $X$ is a smooth algebraic space and when working with equivariant 
$K$-theory we assume that $X$ is a smooth scheme.\\

In this section we define, for each positive element $V \in K_G(X)$, 
twisted products $\star_{c_V}$ and $\star_{\kclass_V}$ on
$A^*_G(I_G(X))$ and $K_G(I_G(X))$, respectively. In general the
product will be commutative but not associative. When $V = \T$ is the
element of $K_G(X)$ corresponding to the tangent bundle of the
quotient stack $[X/G]$, then the product is associative.

\subsection{The twisted pullback 
$K_G(X) \to K_G(\II_G(X))$
} \label{subsec.twistedpullback}
Let $G$ be an algebraic group acting 
quasi-freely
on an
algebraic space $X$. The construction of Section~\ref{sec.logres}
allows us to define 
twisted pullbacks
$K_G(X) \to K_G(\II_G(X))$ and, more generally, $K_G(X) \to K_G(\Imult{l}_G(X))$.
\begin{defi} \label{def.twistshift}
If $V$ is an equivariant vector bundle on $X$ and 
$\Phi = \Phi(m_1, \ldots,  m_l) \in \G{l}$
define $V^{tw}(\Phi) \in K_G(\Imult{l}(X))$ to be the class identified with $V(\Phi(m_1, \ldots , m_l, (m_1\ldots m_l)^{-1}))$ under the isomorphism
$$ \Imult{l+1}(\Phi(m_1, \ldots , m_l, (m_1 \ldots m_l)^{-1}))
\stackrel{\esub{l+1}} \to \Imult{l}(\Phi(m_1, \ldots , m_l))$$
of Proposition~\ref{prop:IsomOfInertias}.
\end{defi}

\begin{defi} \label{def.twistedpullback}
Define a twisted pullback $K_G(X) \to K_G(\Itwo_G(X))$ (and, more generally, $K_G(X) \to K_G(\Imult{l}_G(X))$) taking 
$V \mapsto V^{tw}$ by setting
$V^{tw}|_{\Itwo(\Phi)} = V^{tw}(\Phi)$
for each diagonal conjugacy class $\Phi =\Phi(m_1,m_2)\in\G{2}$ (and, more generally, $\Phi(m_1, \ldots , m_l) \in \G{l}$) 
such that
$\Itwo(\Phi) \neq \emptyset$ (and, more generally, $\Imult{l}(\Phi) \neq \emptyset$). 
\end{defi}
(Note that a necessary condition for
$\Itwo(\Phi)$ to be non-empty is that $m_1$ and $m_2$ generate a finite---hence unitarizable---subgroup $H \subset G$.)
The crucial fact about the twisted pullback is that it does not depend on the presentation for $[X/G]$ as a quotient stack.
\begin{thm} \label{thm.twcanonical}
If $V$ is a $G$-equivariant vector bundle on $X$, then the bundle $V^{tw}$
defined above is independent of the choice 
presentation of $\X =[X/G]$
as a quotient stack.
\end{thm}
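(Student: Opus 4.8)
The plan is to show that the construction of $V^{tw}$ is intrinsic to the stack $\X$ by comparing any two presentations through a common refinement. Suppose $\X \cong [X/G] \cong [X'/G']$. Since $X' \to \X$ is a $G'$-torsor, the fiber product $P := X \times_\X X'$ is an algebraic space carrying commuting actions of $G$ and $G'$: the first projection $P \to X$ is a $G'$-torsor and the second $P \to X'$ is a $G$-torsor, and $[P/(G \times G')] \cong \X$. Writing $\Gamma = G \times G'$ and letting $V_P$ denote the pullback of $V$ to $P$, it suffices to prove that the twisted pullback computed for the presentation $[P/\Gamma]$ agrees with the one computed for $[X/G]$; the symmetric argument then identifies it with the computation for $[X'/G']$, which proves the theorem. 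Note that $\Gamma$ acts quasi-freely on $P$ since $[P/\Gamma]\cong\X$ is Deligne--Mumford, so $V^{tw}$ is defined there.

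This reduces everything to the situation of a normal subgroup $N \trianglelefteq \Gamma$ (here $N = 1 \times G'$) that acts freely on $P$ with $P/N = X$ and $\Gamma/N = G$. First I would record the resulting identifications of the inertia. Because $N$ acts freely, for any $l$-tuple $\bg = (\gamma_1, \ldots, \gamma_l) \in \Gamma^l$ with a common fixed point $p \in P$, the subgroup $\langle \gamma_1, \ldots, \gamma_l\rangle$ meets $N$ trivially, so by Lemma~\ref{lem.finitegroup} it is a finite group mapping isomorphically onto its image $\langle g_1, \ldots, g_l\rangle \subset G$ under $\Gamma \to G$. Sending $(\gamma_1, \ldots, \gamma_l, p)$ to $(g_1, \ldots, g_l, x)$, where $x$ is the image of $p$, exhibits $\Imult{l}_\Gamma(P)$ as an $N$-torsor over $\Imult{l}_G(X)$; passing to quotient stacks and using Morita equivalence gives canonical identifications $K_\Gamma(P) = K_G(X)$ and $K_\Gamma(\Imult{l}_\Gamma(P)) = K_G(\Imult{l}_G(X))$ compatible with the evaluation and multiplication maps and with the component decomposition of Proposition~\ref{prop.inertiadecomp2}.

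It then remains to verify that the logarithmic restriction, and hence $V^{tw}$, matches under these identifications. Fix a representative $\bg = (\gamma_1, \ldots, \gamma_l)$ of a diagonal conjugacy class $\Phi$ in $\Gamma^l$ indexing a non-empty component of $\Imult{l}_\Gamma(P)$, mapping to $(g_1, \ldots, g_l)$ in $G^l$. Since $V_P = p^*V$ is pulled back from $X$, the factor $N = 1\times G'$ acts trivially on its fibers, so $\gamma_i$ and $g_i$ act identically on the fiber of $V_P$ over $p$ and on the corresponding fiber of $V$ over $x$. Consequently the eigenvalue decomposition, the eigenbundles, and the multiplicities $r_E$ of Remark~\ref{rem.canononical2}---dictated purely by the representation theory of the finite group $H = \langle \bg\rangle \cong \langle g_1, \ldots, g_l\rangle$---coincide. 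By Lemma~\ref{lem.indep} the class $V(\Phi)$ is independent of the chosen representative, so the two logarithmic restrictions agree under Morita equivalence, and Definitions~\ref{def.twistshift} and~\ref{def.twistedpullback} then yield equal classes $V^{tw}$. The main obstacle is the middle step: carefully checking that the $N$-torsor identification of the multiple inertia is compatible with all the structure maps and with Morita equivalence, so that the finite group $H$ together with its action on the fibers---the only data feeding into the logarithmic trace---is genuinely the same on both sides. Once this compatibility is in place, the equality of $V^{tw}$ is essentially formal.
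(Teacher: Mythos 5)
Your proposal is correct and takes essentially the same route as the paper's own proof: the paper likewise forms the common refinement $W = X \times_\X Y$ with commuting free actions of the two groups, proves your stabilizer-lifting claim as Lemma~\ref{lem.stabs}, and concludes by noting that the isotypic decomposition of $V$ over the fixed loci pulls back compatibly because the complementary factor acts trivially on the fibers of $p^*V$, so the finite-group representation data determining the twist is unchanged. The only difference is packaging: you state the inertia comparison as a single $N$-torsor claim on $\Imult{l}_\Gamma(P)$, while the paper argues conjugacy class by conjugacy class via the decomposition $p^{-1}(\Itwo(\Phi)) = \coprod_{\Phi'} \Itwo(\Phi')$.
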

\begin{proof}
Suppose that $\X = [X/G]$ is equivalent to the quotient
$[Y/H]$ for some group $H$. Let $W = X \times_\X Y$. The
two identifications of $\X$ as a quotient stack imply that
there are commuting free actions of $G$ and $H$ on $W$ such that
$W/H = X$ and $W/G = Y$ and 
$[W/(G \times H)] = \X$.
To prove the theorem it suffices to show that $V^{tw}$ as computed on $\II_G(X)$ may be identified with $V^{tw}$ as computed on $\II_{G \times H}(W)$ under the common identification with $\II_\X$.

The projection map $p \colon W \to X$ induces a morphism (which we
also call $p$) $\II_{G \times H}(W) \to \II_{G}(X)$.
Let $V^{tw,G}$ be the bundle on $\II_G(X)$ obtain by the
logarithmic twist for the $G$ action on $X$ and let $V^{tw,G \times
  H}$ be the $G \times H$-equivariant bundle on $\II_{G \times
  H}(W)$. To identify the two twists, we must show that $p^*(V^{tw,G}) =
V^{tw,G \times H}$.

Let $\Phi \in\G{2}$ be a diagonal conjugacy class, then
$p^{-1}(\Itwo(\Phi)) = \coprod_{\Phi'} \Itwo(\Phi')$, where the sum is
over the finitely many diagonal conjugacy classes $\Phi' 
\subset (G\times H \times G\times H)$ whose image in $G \times G$ is
$\Phi$ and for which $\Itwo(\Phi')\neq \emptyset$. 
Let $\Phi'$ be a conjugacy class in $G \times H \times G \times H$ whose image in $G \times G$ is $\Phi$. To prove the Theorem,
we must show that $p^*(V^{tw}(\Phi))|_{\Itwo(\Phi')} = p^*V^{tw}(\Phi')$ for all such $\Phi$ and  $\Phi'$.

Given a representative $(g_1 \times h_1, g_2 \times h_2) \in \Phi'$,
the map $p \colon W \to X$ restricts to a map $p' \colon W^{g_1 \times
  h_1, g_2 \times h_2} \to X^{g_1, g_2}$. There is a free action of $1
\times Z_H(h_1,h_2)$ on $W^{g_1\times h_1, g_2 \times h_2}$, and the
map $p'$ is a torsor for this group. Let 
$K' = \langle g_1 \times h_1, g_2 \times h_2 \rangle$. 
If $V$ is $G$-equivariant vector bundle on $X$
then there is a natural isomorphism of 
$Z_{G\times H}(g_1 \times h_1, g_2 \times h_2) \times K'$-equivariant
vector bundles.
$$p'^*(V|_{X^{g_1,g_2}}) \to (p^*V)|_{W^{g_1 \times h_1, g_2 \times h_2}}$$
\begin{lemma} \label{lem.stabs}
If $p(w) = x$, then the projection $G \times H \to G$ induces an isomorphism 
$\Stab_{G \times H}(w) \to \Stab_G(x)$.
\end{lemma}
\begin{proof}[Proof of Lemma~\ref{lem.stabs}]
If $(g,h)w = w$, then, since $p$ is $1 \times H$-equivariant map, 
$gp(w) = p(w)$. Thus the projection $G \times H \to G$ restricts to
a map $\Stab_{G \times H}(w) \to \Stab_G(x)$. Since the normal
subgroup $1 \times H \subset G \times H$ acts freely on $W$ with
quotient $X = W/(1 \times H)$, the fibers of $p$ are $(1 \times
H)$-orbits. Hence $gx = x$ if and only if $(g \times 1)w = (1 \times h)^{-1}w$
for a unique element $1 \times h \in 1 \times H$. Thus the map
$\Stab_{G \times H}(w) \to \Stab_G(x)$ is bijective.
\end{proof}
By Lemma~\ref{lem.stabs}, we see that if $g_1 \times h_1, g_2 \times h_2 \in \Stab_{G\times H}(w)$, then the projection induces an isomorphism of groups 
$$K' = \langle g_1 \times h_1, g_2 \times h_2 \rangle \to K = \langle g_1, g_2 \rangle.$$
Now if $V|_{X^{g_1, g_2}}$ decomposes as a sum $\bigoplus_E V_E \otimes E$, with $V_E$ a $Z_G(g_1,g_2)$-equivariant bundle 
and 
with the sum running over irreducible $K$-modules $E$,
then $p'^*V|_{X^{g_1,g_2}} = \bigoplus_E p'^*V_E \otimes E$. 
Thus 
$$p'^* V(g_1,g_2) = \oplus p'^* (V_E^{\oplus a(E)}) = p^*V(g_1 \times h_1, g_2 \times h_2),$$ where
$a(E)= L(g_1)(E) + L(g_2)(E) + L((g_1g_2)^{-1})(E)$. This proves
our theorem.
\end{proof} 
\subsection{The twisted product} \label{subsec.twistedproduct}
Given a positive element $V$ in 
$K_G(X)$,
we may formally define twisted products $\star_{c_V}$ and
$\star_{\kclass_V}$, where 
$c_V = \euler(V^{tw})$ 
and 
$\kclass_V =\lambda_{-1}((V^{tw})^*)$
on $A^*_G(I_G(X))= A^*(\IX)$ and $K_G(I_G(X)) = K(\IX)$, respectively. 

\begin{prop} \label{prop.twistedprodidentity}
For any positive element $V$ in $K_G(X)$, the 
$\star_{c_V}$ product 
on $A^*_G(I_G(X))$
is commutative with identity $[I(\{1\})]$
and 
the pairing $\eta$ on $A^*_G(I_G(X))\otimes\Q$ 
satisfies the Frobenius property when $[X/G]$ is complete.
Similarly, the $\star_{\kclass_V}$  product
on $K_G(I_G(X))$
is commutative with identity $[{\mathscr O}_{I(\{1\})}]$
and 
the pairing $\eta$ on $K_G(I_G(X))$
satisfies the Frobenius property when $[X/G]$ is complete.
\end{prop}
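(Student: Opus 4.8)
The plan is to check each property by reducing the relevant sufficient condition isolated earlier to a symmetry of the logarithmic restriction $V(\bm)$ of Section~\ref{sec.logres}. Write $c_V = \euler(V^{tw})$ and $\kclass_V = \lambda_{-1}((V^{tw})^*)$. Since these are built functorially from the single class $V^{tw}$ and Euler and $\lambda_{-1}$ classes commute with pullback, every identity we need among the components $c_{V,\Phi}$ (resp. $\kclass_{V,\Phi}$) will follow from the corresponding identity among the bundles $V^{tw}(\Phi)$, so it suffices to argue at the level of $V^{tw}$. The essential tool is that, by Definition~\ref{def.twistshift} and Proposition~\ref{prop:IsomOfInertias}, the component $V^{tw}(\Phi)$ on $\Itwo(\Phi(m_1,m_2))$ is identified via $\esub{3}$ with the logarithmic restriction $V(m_1,m_2,(m_1m_2)^{-1})$ of the balanced triple, and that by \eqref{eq.logrestr} the class $V(\bm)$ depends only on the fixed locus $X^\bm$ and on the unordered collection of automorphisms $\{m_i\}$ of $V|_{X^\bm}$.

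For the identity I would invoke Propositions~\ref{prop.chowidentity} and~\ref{prop.kidentity}, which reduce it to the vanishing of $V^{tw}(\Phi)$ whenever $e_1(\Phi)=\{1_G\}$ or $e_2(\Phi)=\{1_G\}$. When $m_1=1$ the balanced triple is $(1,m_2,m_2^{-1})$, so $V^{tw}(\Phi)$ is identified with $V(1,m_2,m_2^{-1}) = L(1)(V|_{X^\bm}) + L(m_2)(V|_{X^\bm}) + L(m_2^{-1})(V|_{X^\bm}) + V^\bm - V|_{X^\bm}$; here $L(1)(V|_{X^\bm})=0$ and the identity $L(g)(V)+L(g^{-1})(V)=V-V^g$ used in the proof of Lemma~\ref{lem.v123} collapses the expression to $0$. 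The case $m_2=1$ is identical. Hence $c_{V,\Phi}=\euler(0)=[\Itwo(\Phi)]$ and $\kclass_{V,\Phi}=\lambda_{-1}(0)=[\Oc_{\Itwo(\Phi)}]$, which is exactly the required condition.

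For commutativity I would use Proposition~\ref{prop.commutative}, so that it suffices to prove $i^*V^{tw}(\Phi)=V^{tw}(i(\Phi))$, i.e. that $V(m_1,m_2,(m_1m_2)^{-1})$ and $V(m_2,m_1,(m_2m_1)^{-1})$ agree under $i$. As $X^{(m_1,m_2)}=X^{(m_2,m_1)}$ and the terms $L(m_1)$, $L(m_2)$, $V^\bm$, $V|_{X^\bm}$ are visibly unchanged, the only issue is to match $L((m_1m_2)^{-1})$ with $L((m_2m_1)^{-1})$. On $X^\bm$ the element $m_1$ acts fiberwise on $V|_{X^\bm}$ and commutes with $Z=Z_G(m_1,m_2)\subseteq Z_G(m_1)$; conjugation by this automorphism sends $(m_1m_2)^{-1}$ to $m_1^{-1}(m_1m_2)^{-1}m_1=(m_2m_1)^{-1}$ and hence gives a $Z$-equivariant isomorphism of the corresponding eigenbundles, so the two logarithmic traces coincide in $K_Z(X^\bm)$. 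For the Frobenius property, assuming $[X/G]$ complete, I would apply Proposition~\ref{prop:frob}, reducing to cyclic invariance $\tau^*c_V=c_V$ (and its $K$-theoretic analogue). The rotation $\rho(g_1,g_2,g_3,x)=(g_2,g_3,g_1,x)$ of balanced triples satisfies $\esub{3}\circ\rho=\tau\circ\esub{3}$, and since $V(\bm)$ depends only on the unordered collection $\{m_i\}$ and on $X^\bm$ it is literally $\rho$-invariant; transporting this through $\esub{3}$ yields $\tau^*c_V=c_V$ and $\tau^*\kclass_V=\kclass_V$.

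The main obstacle I anticipate is the careful bookkeeping of the Morita-equivalence identifications $K_Z(X^\bm)=K_G(\Imult{\bullet}(\Phi))$ through the maps $\esub{3}$, $i$, $\tau$ and $\rho$, ensuring that an identity of restrictions $V(\bm)$ on a common fixed locus really descends to the asserted identity of the components $c_{V,\Phi}$ on the distinct components $\Itwo(\Phi)$. The cyclic (Frobenius) case is clean because $\tau$ corresponds to the honest permutation $\rho$; the swap (commutativity) case is subtler, since $i$ is not a permutation of the balanced triple and one must supply the fiberwise conjugation-by-$m_1$ argument to equate the two logarithmic traces.
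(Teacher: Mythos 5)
Your proposal is correct and follows essentially the same route as the paper: both reduce the three claims to Propositions~\ref{prop.chowidentity}/\ref{prop.kidentity}, \ref{prop.commutative}, and \ref{prop:frob}, and then verify the hypotheses from the formula \eqref{eq.logrestr} for the logarithmic restriction (vanishing of $V((1,m,m^{-1}))$, the swap identity $i^*V(\Phi(m_1,m_2))=V(\Phi(m_2,m_1))$, and cyclic invariance under $\tau$). In fact you supply details the paper's terse proof leaves implicit, notably the fiberwise conjugation-by-$m_1$ argument equating $L((m_1m_2)^{-1})$ with $L((m_2m_1)^{-1})$ and the identification of $\tau$ with the rotation of balanced triples.
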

\begin{proof}
From the definition we see that if $\Phi$ is the conjugacy class of the pair 
$(1,m)$, then $V((1,m,m^{-1}))= 0$, so $\euler(V(\Phi)) = [\Itwo(\Phi)]$.
Similarly,
$\lambda_{-1}((V(\Phi))^*) = [{\mathscr O_{\Itwo(\Phi)}}]$. Hence by Propositions
\ref{prop.chowidentity} and~\ref{prop.kidentity}, the $\star_{c_V}$ and $\star_{\kclass_V}$ products have an identity.

Also, given $m_1, m_2 \in G$, $i^* V(\Phi(m_1, m_2)) =
V(\Phi(m_2,m_1))$, where $i$ is the involution on $\II_G(X)$
that switches the factors. Hence by Proposition~\ref{prop.commutative}
the $\star_{c_V}$ and $\star_{c_\kclass}$ 
products are commutative.
It is immediate from the definition that 
$\tau^* c_V = c_V$
and $\tau^* \kclass_V= \kclass_V$, so by Proposition~\ref{prop:frob} the $\star_{c_V}$ and $\star_{c_\kclass}$  
products are Frobenius when $[X/G]$ is complete.
\end{proof}
For general $V$ the $\star_{c_V}$ product (resp. $\star_{\kclass_V}$ product) will
not be associative.
\begin{lemma} \label{lem.tanbundle}
Let $\X = [X/G]$ be a DM
quotient stack, and let $p\colon X \to
[X/G]$
be the universal $G$-torsor. Then $p^*T_\X = T_X -
{\mathfrak g}$ in $K_G(X)$, where ${\mathfrak g}$ is the Lie algebra
of $G$.
\end{lemma}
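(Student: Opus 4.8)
The plan is to deduce the identity as the class in $K_G(X)$ of the relative tangent sequence of the smooth atlas $p \colon X \to \X$. First I would record the structural facts: since $\X = [X/G]$ is a smooth Deligne--Mumford stack, its tangent sheaf $T_\X$ is locally free (the cotangent complex of a smooth DM stack is a vector bundle in degree $0$), and the quotient map $p$ is a $G$-torsor, hence a smooth, representable, surjective morphism of relative dimension $\dim G$. Pulling back $T_\X$ along $p$, and using descent along the torsor $p$ to identify $G$-equivariant vector bundles on $X$ with vector bundles on $\X$, every bundle appearing below is canonically $G$-equivariant.

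Next I would write down the relative tangent sequence attached to the smooth morphism $p$,
$$0 \to T_{X/\X} \to T_X \to p^* T_\X \to 0,$$
a short exact sequence of $G$-equivariant vector bundles on $X$. Smoothness of $p$ is exactly what guarantees that $T_{X/\X}$ is locally free and that this is a genuine short exact sequence of bundles rather than a longer complex.

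The central step is to identify the relative (vertical) tangent bundle $T_{X/\X}$ with the trivial bundle $\mathfrak{g} \otimes \mathcal{O}_X$, where $\mathfrak{g}$ carries the adjoint $G$-action and $\mathcal{O}_X$ the given action. Because $p$ is a $G$-torsor, the infinitesimal action $\xi \mapsto \xi_X$, sending a Lie-algebra element to its fundamental vector field, defines a $G$-equivariant map $\mathfrak{g} \otimes \mathcal{O}_X \to T_X$ whose image is precisely the vertical tangent bundle, and the torsor property makes it an isomorphism onto $T_{X/\X}$. This is where the quasi-free hypothesis enters: finite stabilizers force the Lie algebra of every stabilizer to vanish, so the map is injective on each fiber---the very condition that makes $\X$ Deligne--Mumford.

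Finally I would pass to classes in $K_G(X)$: the short exact sequence gives $[p^* T_\X] = [T_X] - [T_{X/\X}]$, and the identification of the previous step replaces $[T_{X/\X}]$ by $\mathfrak{g}$, yielding $p^* T_\X = T_X - \mathfrak{g}$. I expect the only genuinely delicate point to be the equivariant identification $T_{X/\X} \cong \mathfrak{g} \otimes \mathcal{O}_X$: one must verify not merely that the vertical bundle is trivializable, but that the trivialization by fundamental vector fields is $G$-equivariant with $\mathfrak{g}$ carrying the adjoint action. Everything else is the standard tangent sequence of a smooth torsor.
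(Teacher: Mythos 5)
Your proof is correct and takes essentially the same route as the paper: the paper's proof simply notes that $X \to [X/G]$ is a representable $G$-torsor and invokes the well-known fact (citing \cite[Lemma A.1]{EdGr:05}) that the relative tangent bundle of a $G$-torsor is $\mathfrak g$, which is precisely the fundamental-vector-field trivialization you spell out. Your expanded discussion of the relative tangent sequence and the adjoint-equivariance of that trivialization is just the content of the cited lemma made explicit.
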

\begin{proof}
The map $X \to [X/G]$ is a representable $G$-torsor, so the result follows from
the well known fact that if $P \stackrel{p} \to Y$ is $G$-torsor, then
$T_p = {\mathfrak g}$. For a proof see \cite[Lemma A.1]{EdGr:05}.
\end{proof}

\begin{defi}
We define $\T := T_X - {\mathfrak g}$ in $K_G(X)$.
\end{defi}  
By Lemma~\ref{lem.tanbundle}, $\T$ is a positive element of $K_G(X)$
and its image in $K(\X)$ is independent of the presentation of
${\mathscr X}$ as a quotient stack.
\begin{defi} The \emph{rational grading on $A^*_G(I_G(X))$} assigns
the rational number $|v| := k + a_{\T}(g)(v)$ to each homogeneous
element $v$ in $A^k_{Z_G(g)}(X^g)\subseteq A^*_G(I_G(X))$ where
$a_{\T}(g)(v)$ denotes the age of $g$ on $\T$, as given in
Definition~\ref{def:age}, evaluated on the support of $v$, and $\T$
is understood to be restricted to $X^g$.
\end{defi}
The next proposition shows that when $[X/G]$ is complete 
the rational grading is natural with respect to the pairing $\eta$
on $A^*_G(I_G(X))\otimes \Q$.
\begin{prop} 
If $[X/G]$ is complete and if $\Phi_1, \Phi_2$ are conjugacy classes
in $G$ and $v_i \in A^*_G(I(\Phi_i))\otimes\Q \subset A^*_G(I_G(X))
\otimes\Q
$
(resp. $v_i \in K_G(I(\Phi_i))
\subset K_G(I_G(X))
$) then 
$\eta(v_1,v_2) = 0$ unless
$\Phi_2 = (\Phi_1)^{-1}$ 
and $\eta(v_1,v_2) = 0$ for all homogeneous
$v_i$ in $A^*_G(I(\Phi_i))\otimes\Q$ unless $|v_1| + |v_2| = \dim
[X/G]$.
\end{prop}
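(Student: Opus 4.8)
The plan is to treat the two vanishing assertions separately, reducing both to a computation on a single component of $I_G(X)$. I would first dispose of the conjugacy constraint, which holds in both the Chow and $K$-theoretic settings. Since $\eta(v_1,v_2) = \int_{[I_G(X)/G]} v_1 \cdot \sigma^*v_2$ and the involution $\sigma\colon(g,x)\mapsto(g^{-1},x)$ carries $I(\Phi_1)$ isomorphically onto $I(\Phi_1^{-1})$, the pullback $\sigma^*v_2$ is supported on $I(\Phi_2^{-1})$. By Proposition~\ref{prop.inertiadecomp} the loci $I(\Psi)$ are disjoint unions of connected components of $I_G(X)$, so the intersection product $v_1\cdot\sigma^*v_2$ (resp. the tensor product in $K$-theory) vanishes unless $\Phi_1 = \Phi_2^{-1}$. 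This gives the first assertion.

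Assuming now that $\Phi_2 = \Phi_1^{-1}$, I would fix a representative $g\in\Phi_1$, so that $g^{-1}\in\Phi_2$ serves as a representative for $\Phi_2$, and note that $X^g = X^{g^{-1}}$ and $Z_G(g) = Z_G(g^{-1})$. Writing $Z = Z_G(g)$ and $F = X^g$, the decomposition of Proposition~\ref{prop.chowdecomp} identifies both $v_1$ and $\sigma^*v_2$ with classes in $A^*_Z(F)$, with $\sigma$ acting as a codimension-preserving isomorphism, and the pairing becomes $\int_{[F/Z]}v_1\cdot\sigma^*v_2$. Because $[F/Z]$ is complete, the quotient degree is nonzero only on the top-codimension summand $A^{\dim[F/Z]}_Z(F)$; hence for homogeneous $v_1\in A^{k_1}_Z(F)$ and $v_2\in A^{k_2}_Z(F)$ the pairing vanishes unless
\begin{equation*}
k_1 + k_2 = \dim[F/Z] = \dim F - \dim Z.
\end{equation*}

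The final step converts this codimension equality into the grading equality by means of the age. Applying the identity $L(g)(V)+L(g^{-1})(V) = V - V^g$ from Lemma~\ref{lem.v123} to $V = \T$ and taking ranks gives $a_\T(g) + a_\T(g^{-1}) = \operatorname{rank}(\T) - \operatorname{rank}(\T^g)$. Here $\operatorname{rank}(\T) = \dim X - \dim G = \dim[X/G]$, while $\T^g = (T_X|_F)^g - \mathfrak g^g$ with $(T_X|_F)^g = T_F$ and $\mathfrak g^g = \Lie(Z)$, so $\operatorname{rank}(\T^g) = \dim F - \dim Z = \dim[F/Z]$. Combined with the codimension constraint this yields
\begin{equation*}
|v_1| + |v_2| = (k_1 + a_\T(g)) + (k_2 + a_\T(g^{-1})) = \dim[F/Z] + \bigl(\dim[X/G] - \dim[F/Z]\bigr) = \dim[X/G].
\end{equation*}
The main obstacle is the bookkeeping in this last step: identifying $(T_X|_F)^g$ with the tangent bundle $T_F$ of the fixed locus and $\mathfrak g^g$ with $\Lie(Z_G(g))$ in characteristic zero, so that $\operatorname{rank}(\T^g) = \dim[F/Z]$. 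One must also keep in mind that the ages are only locally constant functions on $F$, so the whole argument should be run over the connected components of $F = X^g$, where by the first assertion the supports of $v_1$ and $\sigma^*v_2$ must coincide for the pairing to be nonzero.
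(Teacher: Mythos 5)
Your proof is correct and is essentially the paper's own argument carried out in full: the paper's proof consists of the single assertion that the vanishing follows ``immediately from the definition and dimensional considerations,'' and your argument is exactly that---the decomposition of $I_G(X)$ into the components $I(\Psi)$ together with the $\sigma$-support bookkeeping gives the conjugacy constraint, while the fact that the quotient degree lives in top codimension, combined with the age identity $a_\T(g)+a_\T(g^{-1})=\operatorname{rank}(\T)-\operatorname{rank}(\T^g)$ coming from $L(g)(V)+L(g^{-1})(V)=V-V^g$, gives the grading constraint. Your closing remarks (identifying $(T_X|_F)^g$ with $T_F$ and $\mathfrak{g}^g$ with $\Lie(Z_G(g))$, and running the argument over connected components since the ages are only locally constant) correctly fill in details the paper does not even mention.
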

\begin{proof}
The vanishing conditions on the pairing $\eta$ follow immediately from 
the definition and dimensional considerations.
\end{proof}

We now come to our main theorem. 
\begin{thm} \label{thm.assoc} 
The $\star_{c_\T}$ and
$\star_{\kclass_\T}$ products on $A^*_G(I_G(X))$ and $K_G(I_G(X))$
are associative, and the orbifold product $\star_{c_\T}$ on
$A^*_G(I_G(X))$ preserves the rational grading.

In addition the following properties hold:\\

(i) $f:X\to Y$ is an \'etale $G$-equivariant morphism then the induced
pullback $f^*:A^*_G(I_G(Y))\to A^*_G(I_G(X))$ (resp.
$f^*:K_G(I_G(Y))\to K_G(I_G(X))$) respects the $\star_{c_\T}$ product
(resp. $\star_{\kclass_\T}$ product).\\

(ii)
For all $\ell\geq 2$, we have the identity 
\begin{equation}\label{eq:MultipleProducts}
v_1\star_{c_\T} \cdots \star_{c_\T} v_\ell = \mu_*( e_1^* v_1\cdot e_2^*v_2 \cdots e_\ell^* v_\ell \cdot 
\euler(\T^{tw})) 
\end{equation}
in $A^*_G(I_G(X)),$
for all $v_i$ in $A^*_G(I_G(X))$, where $\ej:\Imult{\ell}(X) \to I_G(X)$ is the $j$-th evaluation map taking 
$(m_1,\ldots,m_\ell,x)\mapsto (m_j,x)$ 
and $\mu\colon \Imult{\ell}(X)\to I_G(X)$ takes 
$(m_1,\ldots,m_\ell,x)\mapsto (m_1 m_2\cdots m_\ell,x)$. 
The analogous identity also holds in $K_G(I_G(X))$ for 
$\star_{\kclass_\T}$.
\end{thm}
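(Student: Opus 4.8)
The plan is to establish associativity first (the heart of the theorem), then derive the two listed properties, and finally address the grading claim. For associativity, I would verify the sufficient condition of Proposition~\ref{prop.assoc}, namely the identity
\begin{equation*}
e_{1,2}^*c_{\Phi_{1,2}} \cdot \mu_{12,3}^*c_{\Phi_{12,3}} \cdot \euler(E_{1,2}) = e_{2,3}^*c_{\Phi_{2,3}} \cdot \mu_{1,23}^*c_{\Phi_{1,23}} \cdot \euler(E_{2,3})
\end{equation*}
in $A^*_G(\Imult{3}(\Phi_{1,2,3}))$, where $c_\Phi = \euler(\T^{tw}(\Phi))$. Since the Euler class is multiplicative on direct sums, the left-hand side equals $\euler$ of the $K$-theory class $e_{1,2}^*\T^{tw}(\Phi_{1,2}) + \mu_{12,3}^*\T^{tw}(\Phi_{12,3}) + E_{1,2}$, and similarly on the right. **The key step** is therefore to show these two $K$-theory classes agree on $\Imult{3}(\Phi_{1,2,3})$. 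This is exactly what Lemma~\ref{lem.v123} is built for: applying the identities~\eqref{eq.v12123} and~\eqref{eq.v23123} with $V = \T$, both sides reduce to the symmetric expression $\sum_{i=1}^4 L(m_i)(\T) - \T + (\text{fixed-locus correction terms})$ after identifying the excess bundles $E_{1,2}, E_{2,3}$ with the correction classes $\T^{\bm_{1,2}} + \T^{\bm_{12,3}} - \T^{m_1m_2}$ and $\T^{\bm_{2,3}} + \T^{\bm_{1,23}} - \T^{m_2m_3}$ respectively.

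**The main obstacle** will be verifying that the excess normal bundles $E_{1,2}$ and $E_{2,3}$ of the Cartesian diagrams~\eqref{diag.e12} and~\eqref{diag.e23} are precisely the excess terms appearing in Lemma~\ref{lem.v123}. I would compute each excess bundle directly from its Cartesian square: the excess bundle for a Cartesian diagram of smooth spaces is the cokernel of the natural map of normal bundles, and an explicit calculation with tangent spaces of fixed loci $X^{\bm}$ (using that the normal bundle of $X^g$ in $X$ is the non-invariant part $T_X/T_{X^g} = \T - \T^g$ restricted appropriately) should identify $E_{1,2}$ with $\T^{\bm} - \T^{\bm_{1,2}} - \T^{\bm_{12,3}} + \T^{m_1m_2}$ pulled back to $\Imult{3}(\Phi_{1,2,3})$, which is exactly $\ce_1(\T)(\bm)$ in the notation of~\eqref{eq:ExcessV}. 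Once this identification is in place, Lemma~\ref{lem.v123} gives
\begin{equation*}
e_{1,2}^*\T^{tw}(\Phi_{1,2}) + \mu_{12,3}^*\T^{tw}(\Phi_{12,3}) + E_{1,2} = \sum_{i=1}^4 L(m_i)(\T) - \T,
\end{equation*}
a fully symmetric expression in $m_1,m_2,m_3,m_4 = (m_1m_2m_3)^{-1}$, so the analogous computation for the $(2,3)$-side yields the identical class, proving~\eqref{eq.assoc2}. The $K$-theory version~\eqref{eq.kassoc2} follows by the same argument with $\lambda_{-1}$ replacing $\euler$, using that $\lambda_{-1}((\,\cdot\,)^*)$ is also multiplicative on direct sums.

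For property~(i), I would note that an \'etale $G$-equivariant map $f \colon X \to Y$ induces compatible \'etale maps on all the inertia and multiple-inertia spaces, and that the twisted pullback commutes with \'etale pullback because the eigenbundle decomposition and the logarithmic restriction of Section~\ref{sec.logres} are defined fiberwise and hence pull back; the excess bundles and Euler/$\lambda_{-1}$ classes then pull back compatibly, so $f^*$ respects the products defined by~\eqref{eq.phiprod}. Property~(ii) is an induction on $\ell$ using the base case $\ell = 2$ (the definition of $\star_{c_\T}$) together with the associativity just proved; the identity~\eqref{eq:MultipleProducts} expresses the iterated product as a single pushforward along $\mu\colon \Imult{\ell}(X) \to I_G(X)$, and the inductive step amounts to the same excess-intersection manipulation as in the proof of Proposition~\ref{prop.assoc}, repeatedly using that the composite of multiplication maps factors through $\Imult{\ell}(X)$ and that $\euler(\T^{tw})$ on $\Imult{\ell}$ absorbs the accumulated excess bundles via the general identity~\eqref{eq:SumOfVs} of Lemma~\ref{lem.v123}. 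Finally, the grading claim follows because each Euler class $\euler(\T^{tw}(\Phi))$ has degree equal to the rank of $\T^{tw}(\Phi)$, which by Remark~\ref{rem.canononical2} and Definition~\ref{def:age} is $a_\T(m_1) + a_\T(m_2) + a_\T((m_1m_2)^{-1})$, and a bookkeeping check against the definition of $|v|$ shows $|\alpha \star_{c_\T} \beta| = |\alpha| + |\beta|$.
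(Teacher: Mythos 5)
Your proposal follows essentially the same route as the paper's proof: reduce associativity to Proposition~\ref{prop.assoc}, identify the excess bundles $E_{1,2}$, $E_{2,3}$ with the fixed-locus correction classes of Lemma~\ref{lem.v123} (this is the paper's Lemma~\ref{lem.excess}, stated there in terms of $TX$ and Lie-algebra summands rather than directly in terms of $\T$), conclude that both sides of \eqref{eq.kassoc2} equal the symmetric class $\T(\bm)$, and then handle (i) via $f^*(TY-\mathfrak{g}) = TX - \mathfrak{g}$, (ii) by induction using \eqref{eq:SumOfVs}, and the grading by rank bookkeeping from \eqref{eq.logrestr}, exactly as the paper does. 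The only slips are cosmetic: your displayed symmetric expression should read $\sum_{i=1}^4 L(m_i)(\T) - \T + \T^{\bm}$ (i.e., it equals $\T(\bm)$, and your excess class is $\ce_2(\T)(\bm)$ rather than $\ce_1$), and the rank of $\T^{tw}(\Phi)$ also contains the term $\mathrm{rk}(\T^{\bm}) - \mathrm{rk}(\T)$, which is absorbed by the codimension shift of $\mu_*$ in the grading check.
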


\begin{proof}[Proof of Theorem \ref{thm.assoc}]
Given $m_1, m_2, m_3$ we use the same notation as in Section 
\ref{sec.assoccriterion}
and consider the conjugacy classes of pairs $\Phi_{1,2}, \Phi_{2,3},
\Phi_{12,3}, \Phi_{12,3} \in G\times G$ as well as the conjugacy class
$\Phi_{1,2,3} \in G^3$. 

By Proposition~\ref{prop.assoc} it suffices to prove that equations
\eqref{eq.assoc2} and \eqref{eq.kassoc2} hold for the $\star_{c_{\T}}$
and $\star_{\kclass_{\T}}$ products  
for all triples 
$(m_1, m_2, m_3) \in G^3$ 
such that
$\Imult{3}(\Phi(m_1, m_2,m_3)) \neq \emptyset$. 
To do this we will 
show that the following equation holds in $K_G(
\Imult{3}(\Phi(m_1,m_2,m_3))$
\begin{equation} \label{eq.kgassoc}
e_{1,2}^* \T(\Phi_{1,2})+ \mu_{12,3}^*\T(\Phi_{12,3}) +
E_{1,2}
= e_{2,3}^* \T(\Phi_{2,3})+ \mu_{1,23}^*\T(\Phi_{1,23}) + E_{2,3}
\end{equation}
Let ${\bm} = (m_1, m_2,m_3, (m_1m_2m_3)^{-1})$ and let $Z =
Z_G(m_1,m_2,m_3)$. As usual the map 
$G \times X^{{\bm}} \to \Imult{3}(\Phi_{1,2,3})$, 
$(g,x) \mapsto (gx, gm_1g^{-1}, gm_2g^{-1},
gm_3g^{-1})$ identifies 
$\Imult{3}(\Phi_{1,2,3})$ 
with the quotient $G
\times_Z X^{{\bm}}$. 
Let $Z_{i,j} = Z_G({\bm}_{i,j})$ and let ${\mathfrak z}_{i,j} = \Lie(Z_{i,j})$,
the Lie algebra of $Z_{i,j}$.
Finally, let  ${\mathfrak z}_{k}$ be the Lie algebra of $Z_G(m_k)$ for $k = 12, 23$.
\begin{lemma} \label{lem.excess}
Under the Morita equivalence identification of 
$K_G(\Imult{3}(\Phi_{1,2,3})) = K_Z(X^{{\bm}})$ 
the class $E_{1,2}$ is identified with
\begin{equation} \label{eq.e12}
TX^{m_1m_2}|_{X^{{\bm}}} -
TX^{{\bm_{12,3}}}|_{X^{{\bm}}} -
TX^{{\bm}_{1,2}}|_{X^{{\bm}}} + TX^{{\bm}} +{\mathfrak
  z}_{1,2} + {\mathfrak z_{12,3}} - {\mathfrak z_{12}} + {\mathfrak z}
\end{equation}
and the class $E_{2,3}$ is identified with 
\begin{equation} \label{eq.e23}
TX^{m_2m_3}|_{X^{{\bm}}} -
TX^{{\bm_{1,23}}}|_{X^{{\bm}}} -
TX^{{\bm}_{2,3}}|_{X^{{\bm}}} + TX^{{\bm}} +{\mathfrak
  z}_{2,3} + {\mathfrak z_{1,23}} - {\mathfrak z_{23}} + {\mathfrak z}
\end{equation}
\end{lemma}
\begin{proof}[Proof of Lemma~\ref{lem.excess}]
We only prove the identity \eqref{eq.e12}, as the proof of
\eqref{eq.e23} is identical. By definition 
\begin{equation} \label{eq.normals}
\begin{array}{ccl}
E_{1,2}  
&= & 
\mu_{12,3}^*N_{e_1}
 - N_{e_{1,2}}\\
 & = & 
\mu_{12,3}^*e_1^*TI(\Phi_{12}) 
- \mu_{12,3}^*T\Itwo(\Phi_{12,3}) -
e_{1,2}^*T\Itwo(\Phi_{1,2})+T\Imult{3}(\Phi_{1,2,3})
\end{array}
\end{equation}

If $\Phi$ is the conjugacy class of an $l$-tuple ${\bg} = (g_1,
g_2, \ldots g_l)$, then 
$\Imult{l}(\Phi) \cong G \times_{Z_{{\bg}}} X^{{\bg}}$. 
Hence, under the Morita equivalence identification
of $K_G(\Imult{l}(\Phi))$ with 
$K_{Z_G(\bg)}( X^{\bg})$
 we
have that
\begin{equation} \label{eq.tsphi}
T\Imult{l}(\Phi) = TX^{{\bg}} - \Lie(Z_G(\bg))
\end{equation}
Substituting \eqref{eq.tsphi} into \eqref{eq.normals} yields the identity 
of \eqref{eq.e12}, provided that the various composition 
of pullbacks correspond to the restriction along the inclusions of 
the various fixed loci. This follows from Lemma~\ref{lem.morequivincl} 
\end{proof}

Now, for all $\bg = (g_1, \ldots , g_l)$ in $G^l$, we have $(TX|_{X^{\bg}})^{\bg} = TX^{\bg}$. Likewise, ${\mathfrak g}^{\bg}= {\mathfrak z_{\bg}}$, where ${\mathfrak z_{\bg}} =
\Lie(Z_G(\bg))$. 

Combining the formulas of Lemma~\ref{lem.excess} with
equations \eqref{eq.v12123} and \eqref{eq.v23123}  
with $V = {\T} = TX - {\mathfrak g}$ 
yields, 
\begin{eqnarray}\label{eq:BaseOne}
\T(m_1,m_2,m_3,m_4) &=& \T({\mathbf  m}_{1,2}) + \T({\bm}_{12,3}) +E_{1,2}  \\
\label{eq:BaseTwo}&=&  \T({\mathbf  m}_{2,3}) + \T({\bm}_{1,23}) +E_{2,3}.
\end{eqnarray}
Therefore the $\star_{c_\T}$ and $\star_{\kclass_\T}$ products are associative.

The fact that $\star_{c_\T}$ preserves the rational grading follows immediately from Equation (\ref{eq.logrestr}), where $V =\T$.

If $f:X\to Y$ is an \'etale $G$-equivariant morphism then the induced pullback $f^*:A_G^*(I_G(Y))\to A_G^*(I_G(X))$ respects the orbifold products since $f^*(TY-\mathfrak{g}) = TX - \mathfrak{g}$.  The argument is identical for K-theory.

To prove Equation (\ref{eq:MultipleProducts}), we first observe that
the $\ell = 3$ case follows from either Equations (\ref{eq:BaseOne})
or (\ref{eq:BaseTwo}).  The general case follows by induction. Suppose
that Equation (\ref{eq:MultipleProducts}) holds for $\ell$ then write
\[
v_1\star_{c_\T} \cdots \star_{c_\T} v_\ell\star_{c_\T} v_{\ell+1} = (v_1\star_{c_\T} \cdots \star_{c_\T} v_\ell)\star_{c_\T} v_{\ell+1}
\]
and apply the induction hypothesis to the expression in the parenthesis, using  excess intersection theory and Equation (\ref{eq:SumOfVs}) for $r = \ell+2$ and $j=\ell$ to obtain the desired result.
\end{proof}

\begin{remark}
When $G$ is finite, then the character formula of \cite[Lemma
8.5]{JKK:07} implies that $\T^{tw}$ equals the class of the
obstruction bundle defined in the paper of Fantechi-G\"ottsche
\cite{FaGo:03}. Hence, the 
$\star_{c_\T}$ 
product on $A^*_G(X)\otimes \Q = 
A^*(X)^G$ equals their product.
\end{remark}

\begin{remark}
The analogue of Equation (\ref{eq:MultipleProducts}) in the context of torus actions on symplectic manifolds was proven in \cite{GHK:07}. 
\end{remark}

\begin{example}\label{exam.lusztig}
Let $G$ be a finite group acting upon a point $X$. In this case,
$I_G(X) = G$ where $G$ acts upon itself by conjugation. Therefore,
the orbifold $K$-theory $K_G(I_G(X))$ is additively equal to
$K_G(G)$, the Grothendieck group of $G$-equivariant vector bundles
on $G$, but the orbifold product is not the usual product on
equivariant K-theory. Since the tangent bundle to $G$ has rank zero,
$\T^{tw}$ vanishes and the orbifold product of two $G$-equivariant
vector bundles $V$ and $W$ over $G$ is given by
\[
V \star W = \mu_* (e_1^* V\otimes e_2^* W)
\]
where $e_i : G\times G\to G$ are the projections onto the $i$-th
factor and $\mu:G\times G\to G$ is the multiplication.  The $\star$
product on $K_G(G)$ coincides with a product introduced by Lusztig 
\cite{Lu:87}. Kaufmann and Pham \cite[Theorem 3.13]{KaPh:07} show that
this product also coincides with a product on the representation ring
of the Drinfeld double of $G$ which appears in the physics literature
(cf. \cite{DPR:90}).
\end{example}

\begin{example} \label{exam.otherassoc}
If we set $c_{\Phi(g,1)}= [\Itwo(\Phi(g,1))]$ and $c_{\Phi(1,g)} =
[\Itwo(\Phi(g,1))]$ and $c(\Phi) = 0$ for all other conjugacy classes in
$\G2$, then the $\star_c$  
product is commutative and
associative. This is the restriction of the $\star_{c_\T}$ product
obtained by setting all products $\alpha_{\Psi_1} \star
\alpha_{\Psi_2}$ to be 0 unless $\Psi_1$ or $\Psi_2$ is the
conjugacy class $\{1\}$. If $g \in \Psi$ is a representative
element, and we identify 
$A^*_G(I(\Psi)) = A^*_{Z_G(g)}( X^g)$ 
then we
have a simple formula for the $\star_c$ product:
\begin{equation}
\alpha_{\{1\}} \star_c \alpha_\Psi = j^*\alpha_{\{1\}} \cdot \alpha_g,
\end{equation}
where $j^*\colon A^*_G(X)= A^*_G(I(\{1\}) \to A^*_{Z_G(g)}(X^g)$ is the
composition of the restriction of groups map $A^*_G(X) \to
A^*_{Z_G(g)}(X)$ with pullback in $Z_G(g)$-equivariant Chow groups for
the regular embedding $X^g \hookrightarrow X$, and $\alpha_g \in
A^*_{Z_G(g)}(X^g)
$ is the class which is Morita equivalent to $\alpha_{\Psi}
\in A^*_G(I(\Psi))$.
\end{example}

\section{Orbifold Riemann-Roch}

\textbf{ Background hypotheses.} In this section, all spaces are required
to be schemes and all group actions have finite stabilizer.\\

In this section, we extend the orbifold Riemann-Roch theorem of \cite{JKK:07}
to quotient stacks $\X = [X/G]$ with $G$ an arbitrary linear
algebraic group acting with finite stabilizer on a smooth scheme $X$. In
particular, we show that a generalization of the stringy Chern
character defines a ring homomorphism 
$$\cho \colon K_G(I_G(X))  \to A^*_G(I_G(X))\otimes\Q$$ 
of orbifold ($\star_{\kclass_\T}$ and $\star_{c_\T}$) products. 
Note, however, that (when $X$ is complete) $\cho$ does not preserve the 
pairing.
Applying the equivariant
Riemann-Roch theorem of \cite{EdGr:00}, the map $\cho$ factors through
an isomorphism
 $K_G(I_G(X))_1 \to A^*_G(I_G(X))\otimes\Q$, 
where $K_G(I_G(X))_{1}$ is a
distinguished summand in $K_G(I_G(X))\otimes \Q$ which generalizes the small
orbifold $K$-theory of \cite{JKK:07} for quotients by a finite group.

After tensoring with $\C$, the summand 
$K_G(I_G(X))_1$ may be identified via the
Riemann-Roch isomorphism of \cite{EdGr:00} with $K_G(X)\otimes \C =
K([X/G])\otimes \C$. As a corollary we obtain a orbifold product on
$K(\X)\otimes \C$. We give an explicit description of this product
in Section~\ref{sec.orbonktheory}.

\subsection{Background on equivariant Riemann-Roch}
We recall some basic facts that were proved in \cite{EdGr:00} and
\cite{EdGr:05} about the equivariant
Riemann-Roch theorem. 
Suppose that $Y$ is a smooth algebraic space on which
an algebraic group $G$ acts. The Chern character defines a ring
homomorphism $\ch\colon K_G(Y) \to \prod_{i =0}^\infty A^i_G(Y)\otimes \Q$.
When the group acts 
quasi-freely, 
then by
\cite{EdGr:98,EdGr:00}, $A^i_G(Y) \otimes \Q= 0$ for $i$ sufficiently large, so
the infinite direct product 
may be identified with the usual rational equivariant Chow
ring.  If the quotient stack $[Y/G]$ satisfies the resolution property
(in particular if $Y$ is a smooth scheme), then we may identify the
Grothendieck group of equivariant vector bundles $K_G(Y)$ with the
Grothendieck group of $G$-linearized coherent sheaves.

In
this case, the equivariant Riemann-Roch theorem of \cite{EdGr:00}
implies that the map $\ch \colon K_G(Y) \to \prod_{i =0}^\infty
A^i_G(Y)\otimes \Q$ factors through an isomorphism $\widehat{K_G(Y)} \to
\prod_{i =0}^\infty A^i_G(Y)$, where $\widehat{K_G(Y)}$ 
is the
completion of $K_G(X)\otimes \Q$ at 
the augmentation ideal of $\Rep(G)\otimes \Q$.  
When $G$ acts 
quasi-freely
then $K_G(Y)$ is supported at a finite number of
points of the representation ring $\Rep(G)$, 
and we may identify the
augmentation completion $\widehat{K_G(Y)}$ with the localization at
the same ideal. This localization, which we
denote by $K_G(Y)_1$, is a summand in 
$K_G(Y)\otimes \Q$. 
If we let ${\mathscr Y} = [Y/G]$ be the
quotient stack, then $K_G(Y)_1$ may be identified with the
completion of $K({\mathscr Y})\otimes \Q$ 
at its augmentation ideal. In
particular, it is independent of the choice of presentation for
${\mathscr Y}$ as a quotient stack.

\begin{thm} (cf. \cite{EdGr:00})
The equivariant Chern character
homomorphism
$\ch \colon K_G(Y) \to A^*_G(Y)\otimes \Q$
factors through an isomorphism $K_G(Y)_1 \to A^*_G(Y)\otimes \Q$.
\end{thm}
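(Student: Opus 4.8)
The plan is to deduce the statement by combining the equivariant Riemann--Roch theorem of \cite{EdGr:00} with the two simplifications afforded by the quasi-free hypothesis. First I would recall the precise form of that theorem: the Chern character induces an isomorphism $\ch\colon \widehat{K_G(Y)} \to \prod_{i\ge 0} A^i_G(Y)\otimes\Q$, where $\widehat{K_G(Y)}$ denotes the completion of $K_G(Y)\otimes\Q$ along the augmentation ideal $\mathfrak{m}_1$ of $\Rep(G)\otimes\Q$. Thus the entire content is to identify each of the two completions with the objects appearing in the statement.

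On the Chow side I would invoke \cite[Theorem 3]{EdGr:98}, which under quasi-freeness gives $A^i_G(Y)\otimes\Q = 0$ for $i > \dim Y$. Consequently the target product $\prod_{i\ge 0} A^i_G(Y)\otimes\Q$ is the finite sum $A^*_G(Y)\otimes\Q$, with no completion needed. This degree bound also explains why the completion appears on the source at all: the augmentation ideal $\mathfrak{m}_1$ acts on $A^*_G(Y)\otimes\Q$ through the positive-degree part of $A^*_G(\Spec\C)\otimes\Q$, so a power $\mathfrak{m}_1^{\dim Y + 1}$ annihilates the degree-bounded Chow ring, and hence $\ch$ factors through the $\mathfrak{m}_1$-adic completion of $K_G(Y)\otimes\Q$.

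On the $K$-theory side I would use the fact, recalled in the paragraph preceding the theorem and established in \cite{EdGr:05}, that for a quasi-free action $K_G(Y)\otimes\Q$ is supported on finitely many closed points of $\Spec(\Rep(G)\otimes\Q)$, one of which is $\mathfrak{m}_1$. A module over a Noetherian ring whose support is a finite set of maximal ideals is the direct sum of its localizations at those ideals; writing $K_G(Y)_1$ for the localization at $\mathfrak{m}_1$, this exhibits it as a summand of $K_G(Y)\otimes\Q$ and, as recalled above, identifies it with the augmentation completion $\widehat{K_G(Y)}$.

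Putting these together, the map $\ch$ annihilates the complementary summand (the localizations at the remaining support points), because it factors through the completion at $\mathfrak{m}_1$, and it restricts on $K_G(Y)_1 \cong \widehat{K_G(Y)}$ to the Edidin--Graham isomorphism with target $A^*_G(Y)\otimes\Q$. The only step requiring genuine care is the commutative-algebra identification of localization, completion, and direct summand; since the input is the finiteness of $\Supp_{\Rep(G)\otimes\Q}\bigl(K_G(Y)\otimes\Q\bigr)$, the main point to get right is citing the correct form of \cite{EdGr:05} guaranteeing that this support is finite.
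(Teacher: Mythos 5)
Your argument is correct in substance and rests on the same theorem of \cite{EdGr:00} that the paper invokes, but the two proofs divide the labor in opposite ways, and one of your citations needs adjusting. The paper's proof is two lines: it quotes \cite[Theorem 3.3]{EdGr:00}, which asserts that the Riemann--Roch map $\tau^G_X \colon K_G(Y)_1 \to A^*_G(Y)$, given by $V \mapsto \ch(V)\Td(\T)$, is an isomorphism, and then observes that $\Td(\T)$ is invertible in $A^*_G(Y)\otimes \Q$, so that $\ch$ alone is also an isomorphism. In other words, the entire content of the paper's proof is the removal of the Todd-class twist; the identification of the augmentation completion with the localization $K_G(Y)_1$ (finite support, \cite{EdGr:05}) and of $\prod_{i}A^i_G(Y)\otimes\Q$ with $A^*_G(Y)\otimes\Q$ (the vanishing $A^i_G(Y)\otimes\Q=0$ for $i>\dim Y$, \cite[Theorem 3]{EdGr:98}) are treated as background, being exactly the facts recalled in the paragraph preceding the theorem. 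Your proof does the reverse: it spells out the completion/localization and product/sum identifications carefully, but takes as ``the precise form'' of the cited theorem the statement that $\ch$ itself induces an isomorphism $\widehat{K_G(Y)} \to \prod_i A^i_G(Y)\otimes\Q$. That is not quite the precise form: the isomorphism established in \cite{EdGr:00} is the map $\tau$ built from $\ch$ together with a Todd factor, and passing from $\tau$ to $\ch$ is precisely the step the paper's proof performs. The repair is one line, since $\Td(\T)$ has constant term $1$ and is therefore a unit in $A^*_G(Y)\otimes\Q$ (equivalently, in the completed ring), so your proof stands; but note the irony that the step you single out as the only delicate one (the commutative algebra of finite supports, which is indeed covered by \cite{EdGr:05} and is also legitimate here because $K_G(Y)\otimes\Q$ is a unital $\Rep(G)\otimes\Q$-algebra, so its support is the vanishing locus of an ideal with Artinian quotient ring) is the part the paper takes entirely for granted, while the step the paper actually writes out is the one your citation silently absorbs.
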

\begin{proof}
Since $Y$ is smooth, 
\cite[Theorem 3.3]{EdGr:00} states that the map
$$\tau^G_X \colon K_G(Y)_1 \to A^*_G(Y), \text{ given by } V \mapsto \ch(V)
\Td(\T)$$ is an isomorphism. However, the class 
$\Td(\T)$ is invertible in $A^*_G(Y)\otimes \Q$, 
so the Chern character
homomorphism is also an isomorphism, after completing at the augmentation ideal.
\end{proof}

\subsection{The orbifold Chern character}
As observed in \cite{JKK:07}, the Todd class map $K(X) \to A^*(X)
\otimes \Q$ can be formally extended to a map $K(X) \otimes \Q \to
A^*(X) \otimes \Q$ by defining 
$\Td({\frac{1}{n} V})$ 
to be the unique in
element of the form $t = 1 + t_1 + \ldots t_m$ with $t_i \in A^i(X)$
that satisfies the equation $t^n = \Td(V) \in A^*(X)$. The same
argument works for equivariant Chow groups and we can define
$\Td({1\over{n}}V) \in \Pi_{i=0}^\infty A^i_G(X)$ for any equivariant
vector bundle $V$.

If $\Psi$ is a conjugacy class in $G$ and $V$ is a $G$-equivariant
vector bundle on $X$, let $L(\Psi)(V) \in K_G(I_\Psi)\otimes \Q$ be
the class which is Morita equivalent to $L(m)(V|_{X^m})$ for any $m \in \Psi$.
The argument used to prove Lemma~\ref{lem.indep} shows that
$L(\Psi)(V)$ is independent of the choice of $m$.

\begin{defi}
Define the orbifold Chern character 
$$\cho \colon K_G(I_G(X)) \to A^*_G(I_G(X))\otimes \Q$$
by the formula
\begin{equation}
\cho(\Fc_\Psi) =\ch(\Fc_\Psi)
\Td(-L(\Psi)({\T}))
\end{equation}
for a class 
$\Fc_\Psi \in K_G(I(\Psi))$.
\end{defi}
We now obtain the following generalization of \cite[Theorem
6.1]{JKK:07} to groups of positive dimension.
\begin{thm} \label{thm.orbrr}
The map $\cho \colon K_G(I_G(X)) \to A^*_G(I_G(X))\otimes\Q$ is a homomorphism when $K_G(I_G(X))$ has the
$\star_{\kclass_{{\T}}}$ product and 
$A^*_G(I_G(X))\otimes\Q$
has the $\star_{c_\T}$ product. 
Moreover, the map $\cho$ factors through an isomorphism
$$\cho \colon K_G(I_G(X))_1 \to A^*_G(I_G(X))\otimes\Q.$$
\end{thm}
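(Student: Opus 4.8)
The plan is to prove the two assertions separately: first that $\cho$ intertwines the orbifold products, and then that it descends to an isomorphism on the distinguished summand. Throughout I would work one diagonal conjugacy class at a time, using the decomposition $K_G(I_G(X)) = \bigoplus_\Psi K_{Z_G(m)}(X^m)$ of Proposition~\ref{prop.chowdecomp}. Since $\ch$ is additive and each Todd correction factor is fixed, it suffices to take homogeneous classes $\Fc \in K_G(I(\Psi_1))$ and $\mathscr G \in K_G(I(\Psi_2))$, fix a diagonal conjugacy class $\Phi$ with $e_1(\Phi) = \Psi_1$, $e_2(\Phi) = \Psi_2$, set $\Psi_3 = \mu(\Phi)$, and check that the contribution of $\cho(\Fc \star_{\kclass_\T}\mathscr G)$ in the $I(\Psi_3)$-component coming from $\Itwo(\Phi)$ agrees with the corresponding contribution of $\cho(\Fc)\star_{c_\T}\cho(\mathscr G)$.

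First I would expand both sides. On the $I(\Psi_3)$-component $\cho$ is multiplication of $\ch$ by the invertible class $\Td(-L(\Psi_3)(\T))$. Applying the equivariant Grothendieck--Riemann--Roch theorem to the finite l.c.i.\ morphism $\mu\colon \Itwo(\Phi)\to I(\Psi_3)$ (valid here since all spaces are smooth schemes, cf.\ \cite{EdGr:00,Koc:98}) introduces the relative Todd factor $\Td(T_\mu)$, $T_\mu = T\Itwo(\Phi) - \mu^* T I(\Psi_3)$. Using that $\ch$ is a ring homomorphism, the identity $\ch(\lambda_{-1}(E^*)) = \euler(E)\,\Td(E)^{-1}$ for $E = \T^{tw}$, and the projection formula to absorb $\Td(-L(\Psi_3)(\T))$ as $\mu^*\Td(-L(\Psi_3)(\T))$, the left-hand side becomes
\[
\mu_*\Big(e_1^*\ch(\Fc)\cdot e_2^*\ch(\mathscr G)\cdot \euler(\T^{tw})\cdot \Td\big(T_\mu - \T^{tw} - \mu^*L(\Psi_3)(\T)\big)\Big),
\]
while multiplicativity of $\ch$ rewrites the right-hand side as
\[
\mu_*\Big(e_1^*\ch(\Fc)\cdot e_2^*\ch(\mathscr G)\cdot \euler(\T^{tw})\cdot \Td\big(-e_1^*L(\Psi_1)(\T) - e_2^*L(\Psi_2)(\T)\big)\Big).
\]
Since $\Td$ is injective with values in the units of $A^*_G\otimes\Q$, equality reduces to the single $K$-theoretic identity $T_\mu = \T^{tw} + \mu^*L(\Psi_3)(\T) - e_1^*L(\Psi_1)(\T) - e_2^*L(\Psi_2)(\T)$ in $K_G(\Itwo(\Phi))\otimes\Q$.

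The hard part will be this last identity, which I expect to be the heart of the argument. I would verify it under the Morita identification $\Itwo(\Phi) = G\times_Z X^{\bm}$ with $\bm = (m_1,m_2)$ and $Z = Z_G(\bm)$. By \eqref{eq.tsphi}, $T_\mu = TX^{m_1,m_2} - TX^{m_1m_2}|_{X^{m_1,m_2}} - \mathfrak z + \mathfrak z_{12}$, where $\mathfrak z = \Lie Z_G(m_1,m_2)$ and $\mathfrak z_{12} = \Lie Z_G(m_1m_2)$. By Lemma~\ref{lem.morequivincl} the pulled-back classes $e_1^*L(\Psi_1)(\T)$, $e_2^*L(\Psi_2)(\T)$, $\mu^*L(\Psi_3)(\T)$ become $L(m_1)(\T)$, $L(m_2)(\T)$, $L(m_1m_2)(\T)$ restricted to $X^{m_1,m_2}$, while Definition~\ref{def.twistshift} together with \eqref{eq.logrestr} unwinds $\T^{tw}$ to $\sum_i L(m_i)(\T) + \T^{m_1,m_2} - \T$ for the triple $(m_1,m_2,(m_1m_2)^{-1})$. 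Substituting and cancelling the $L(m_1)(\T)$ and $L(m_2)(\T)$ terms leaves $L(m_1m_2)(\T) + L((m_1m_2)^{-1})(\T) + \T^{m_1,m_2} - \T$, and the relation $L(g)(\T) + L(g^{-1})(\T) = \T - \T^g$ from the proof of Lemma~\ref{lem.v123} collapses this to $\T^{m_1,m_2} - \T^{m_1m_2}|_{X^{m_1,m_2}}$. Writing $\T = TX - \mathfrak g$ and recognizing $\T^{m_1,m_2} = TX^{m_1,m_2} - \mathfrak z$ and $\T^{m_1m_2}|_{X^{m_1,m_2}} = TX^{m_1m_2}|_{X^{m_1,m_2}} - \mathfrak z_{12}$ recovers exactly the expression for $T_\mu$ above. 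That $\cho$ preserves units is immediate, since $L(\{1\})(\T) = 0$ forces $\cho([\Oc_X]) = \mathbf 1$; this completes the homomorphism claim.

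Finally, for the isomorphism I would restrict to a single summand $K_G(I(\Psi)) = K_{Z_G(m)}(X^m)$. There $\cho$ is the composite of $\ch\colon K_{Z_G(m)}(X^m)\to A^*_{Z_G(m)}(X^m)\otimes\Q$ with multiplication by $\Td(-L(\Psi)(\T))$, which is a unit (its leading term is $1$). The equivariant Riemann--Roch theorem stated above, applied to the smooth scheme $X^m$ with the quasi-free action of $Z_G(m)$, says $\ch$ factors through an isomorphism on the summand $K_{Z_G(m)}(X^m)_1$; multiplication by a unit is again an isomorphism, so $\cho$ inherits the property. Summing over the finitely many non-empty $\Psi$ and using $K_G(I_G(X))_1 = \bigoplus_\Psi K_{Z_G(m)}(X^m)_1$ yields the claimed isomorphism $\cho\colon K_G(I_G(X))_1 \to A^*_G(I_G(X))\otimes\Q$.
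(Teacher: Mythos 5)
Your proposal is correct and follows essentially the same route as the paper: both reduce the homomorphism claim, via equivariant Grothendieck--Riemann--Roch for $\mu$, the identity $\ch(\lambda_{-1}(V^*))\Td(V) = \euler(V)$, and the projection formula, to a single $K$-theoretic identity relating $T_\mu$, $\T^{tw}$, and the logarithmic traces, which is then checked on $X^{\bm}$ via Morita equivalence, the definition of the twisted pullback, and $L(g)(V)+L(g^{-1})(V) = V - V^g$; the isomorphism statement likewise follows from the equivariant Riemann--Roch theorem of \cite{EdGr:00} together with invertibility of the Todd correction factors. The only differences are cosmetic: you use the standard sign convention $T_\mu = T\Itwo(\Phi) - \mu^* TI(\Psi_3)$, which makes your intermediate formulas internally consistent (the paper's displayed version of this identity carries the opposite sign, with compensating slips in its subsequent manipulations), and you apply Riemann--Roch summand-by-summand to $K_{Z_G(m)}(X^m)$ rather than directly to the $G$-action on $I_G(X)$, which tacitly uses that localization at the augmentation ideal of $\Rep(G)$ matches localization at that of $\Rep(Z_G(m))$ under Morita equivalence.
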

\begin{proof}
The proof is analogous to the proof of Theorem 6.1 in \cite{JKK:07}.
Given conjugacy classes $\Psi_1$ and $\Psi_2$ and 
$\alpha_1 \in K_G(I(\Psi_1))$, $\alpha_2 \in K_G(I(\Psi_2))$, then
by definition
\begin{equation} \label{eq.ktprop}
\cho(\alpha_1 \star_{\kclass_\T} \alpha_2) = 
\cho\left( \sum_{\Phi_{1,2}} 
\mu_*(e_1^*\alpha_1 \otimes e_2^*\alpha_2 \otimes 
\lambda_{-1}(\T(\Phi_{1,2}))^*) \right),
\end{equation}
where the sum on the right-hand side of \eqref{eq.ktprop} is over all
$\Phi_{1,2} \in \G{2}$ 
satisfying $e_1(\Phi_{1,2}) =
\Psi_1$, $e_2(\Phi_{1,2}) = \Psi_2$.

Similarly, 
\begin{equation} \label{eq.itprop}
\cho(\alpha_1) \star_{c_\T} \cho(\alpha_2) = \sum_{\Phi_{1,2}}
\mu_*( \cho(\alpha_1) \cho(\alpha_2) 
\euler(\T(\Phi_{1,2})))
\end{equation}
To prove that $\cho$ commutes with the twisted product, it
suffices to show that the right-hand sides of \eqref{eq.ktprop} and
\eqref{eq.itprop} are termwise equal. 
Consider $\Phi_{1,2} \in\G{2}$ 
satisfying $e_1(\Phi_{1,2}) = \Psi_1$, $e_2(\Phi_{1,2}) =\Psi_2$.
Let $\Psi_{12} = \mu(\Phi_{1,2})$.
By definition 
\begin{multline} \label{eq.grr}
\cho\left(\mu_*(e_1^*\alpha_1 \otimes e_2^* \alpha_2 
\otimes \lambda_{-1}((\T(\Phi_{1,2})^*) ))\right) =\\
\ch\left(\mu_*(e_1^*\alpha_1 \otimes e_2^* \alpha_2 
\otimes \lambda_{-1}((\T(\Phi_{1,2}))^*))\right)\Td(-L(\Psi_{12})(\T))
\end{multline}
By the equivariant Grothendieck-Riemann-Roch theorem, the right-hand 
side of \eqref{eq.grr} equals
\begin{equation} \label{eq.eqgrr1}
\mu_*\left[\ch(e_1^*\alpha_1 \otimes e_2^*\alpha_2 \otimes 
\lambda_{-1}((\T(\Phi_{1,2}))^*))\Td(T_\mu)\right]\Td(-L(\Psi_{12})(\T))
\end{equation}
Now if $V$ is a positive element in
equivariant $K$-theory, then 
$\ch(\lambda_{-1}(V^*))\Td(V) = \euler(V)$. 
Thus,
using the multiplicativity of the ordinary equivariant Chern character, we may rewrite the right hand side of 
\eqref{eq.eqgrr1} as
\begin{equation} \label{eq.rr2}
\mu_*\left[\ch(e_1^*\alpha_1) \ch(e_2^*\alpha_2) 
\euler(\T(\Phi_{1,2}))
\Td(-\T(\Phi_{1,2})))\Td(T_\mu)\right]\Td(-L(\Psi_{12})(\T))
\end{equation}
Applying the projection formula, \eqref{eq.rr2} can be rewritten as 
\begin{equation} \label{eq.rr3}
\mu_*\left[e_1^*\ch(\alpha_1) e_2^*\ch(\alpha_2) 
\euler(\T(\Phi_{1,2}))
\Td(T_\mu)\Td(-\T(\Phi_{1,2}))
\Td(-\mu^*L(\Psi_{12})(\T))\right]
\end{equation}
On the other hand,
\begin{equation} \label{eq.rr4}
\begin{split}
& \cho(\alpha_1) \star_{c_\T} \cho(\alpha_2) \\
& \;\;\;\;= \mu_*\left[e_1^*(\ch(\alpha_1) \Td(-L(\Psi_1)(\T)))
 e_2^*(\ch(\alpha_2) \Td(-L(\Psi_2) ({\T}))) 
\euler(\T(\Phi_{1,2}))\right]\\
&  \;\;\;\; = \mu_*\left[e_1^*\ch(\alpha_1) e_2^*\ch(\alpha_2) 
\euler(\T(\Phi_{1,2}))
\Td(-e_1^*L(\Psi_1)(\T))\Td(-e_2^*L(\Psi_2)(\T))\right] 
\end{split}
\end{equation}
Comparing \eqref{eq.rr3} and \eqref{eq.rr4}, it suffices to show that 
the equation
\begin{equation} \label{eq.ktrr}
T_\mu -\T(\Phi_{1,2}) - \mu^*L(\Psi_{12})(\T) = -e_1^*L(\Psi_1)(\T) - 
e_2^*L(\Psi_2)(\T)
\end{equation}
holds in $K_G(I(\Psi_{1,2})$.
If $(m_1, m_2) \in \Psi$, then by Morita equivalence it suffices to prove the corresponding identity in $K_{Z_{\bm}}(X^{\bm})$, where ${\bm} = (m_1, m_2, (m_1 m_2)^{-1})$. 
By definition, 
\begin{equation} \label{eq.tmu}
T_\mu = \mu^*TI(\Psi_{12}) - T\Itwo(\Phi_{1,2})
\end{equation}
As noted above, if $\Phi$ is the conjugacy class of an $l$-tuple
${\bg}= (g_1, \ldots ,g_l)$, then $T\Imult{l}(\Phi)= \T^{{\mathbf
    g}}$. Hence by Lemma~\ref{lem.morequivincl}, the right-hand side of
\eqref{eq.tmu} is Morita equivalent to
\begin{equation} \label{eq.tmu1}
\T^{m_1m_2}|_{X^{\bm}} - \T^{{\bm}}.
\end{equation}
Substituting \eqref{eq.tmu1} and 
the definition of the logarithmic trace we see that the left-hand side of 
\eqref{eq.ktrr} 
is Morita equivalent to the
class
\begin{equation} \label{eq.ktrrlhs1}
\T^{m_1 m_2}|_{X^{\bm}} - \T({\bm}) -
L(m_1m_2)(\T)|_{X^{\bm}}.
\end{equation}
Applying the identity $L(g)(V) + L(g^{-1})(V) = V - V^g$ with $g = m_1
m_2$ and $V = \T$, we can rewrite \eqref{eq.ktrrlhs1}
as 
\begin{equation} \label{eq.ktrrlhs2}
-\T({\bm}) + \T^{\bm} - \T^{m_1m_2}|_{X^{\bm}} +
L(m_1m_2)(\T)|_{X^{\bm}}.
\end{equation}
By definition 
\begin{equation} \label{eq.defoftwist}
\T({\bm}) = L(m_1)(\T)|_{X^{\bm}} +
L(m_2)(\T)|_{X^{\bm}} + L((m_1m_2)^{-1})(\T)|_{X^{\bm}} +
\T^m|_{X^{\bm}} - \T^{\bm}.
\end{equation}
Substituting \eqref{eq.defoftwist} into \eqref{eq.ktrrlhs2}, we see
that the left-hand side of \eqref{eq.ktrr} simplifies to
\begin{equation} \label{eq.ktrrlhs3}
-L(m_1)(\T)|_{X^{\bm}} - L(m_2)(\T)|_{X^{\bm_2}},
\end{equation}
but the right hand side of \eqref{eq.ktrr} is Morita equivalent to the
same class, since $L(\Psi_i)(\T)$ is Morita equivalent to $L(m_i)(\T)$. 
Therefore $\cho$ defines a ring homomorphism.

Moreover, since the class $\Td(L(\Psi))$ is invertible in
$A^*_G(I(\Psi)) \otimes \Q$
and the 
$\star_{e_\T}$ commutes with the natural $\Rep(G)$ 
action on $K_G(X)$ we see that the
map $\cho$ factors through a ring 
isomorphism of twisted products $K_G(X)_{1} \to A^*_G(I_G(X))\otimes \Q$. 
\end{proof}
\begin{remark}
The localization $K_G(I_G(X))_1$ is the analogue, for $\dim G > 0$,
of the \emph{small orbifold $K$-theory} of \cite{JKK:07}. 
In particular, 
when $G$ is a finite group
then $K_G(X)_1$ is isomorphic to 
$K_{orb}([X/G])$, the 
small orbifold $K$-theory of $[X/G]$.
This follows from the fact that both are  isomorphic to the orbifold Chow ring.
\end{remark}
\section{A twisted product on $K_G(X) \otimes \C$}
\label{sec.orbonktheory} Given a commutative, twisted product $\star$
on $K_G(I_G(X))$ which commutes with the $\Rep(G)$-algebra 
structure on
$K_G(I_G(X))$, the results of \cite{EdGr:00} and \cite{EdGr:05}
allow one to define a corresponding product on $K_G(X) \otimes \C$.
In particular, the twisted product $\star_{\kclass_\T}$ induces an
orbifold product on $K_G(X) \otimes \C$. In this case there is an
orbifold Chern character isomorphism of $K_G(X)\otimes \C$ with
$A^*_G(I_G(X))\otimes \C$. 

\begin{remark}
The isomorphism of vector spaces
$K_G(X)\otimes \C \to A^*_G(I_G(X))\otimes \C$ may be viewed as
an algebraic analogue Adem and Ruan's
isomorphism \cite{AdRu:03} between equivariant topological $K$-theory and  
equivariant cohomology of $I_G(X)$. 
However, the products that appear in the Chern character described in  \cite{AdRu:03} are not the same as ours.  In particular, the product they use on equivariant topological $K$-theory is the tensor product, and the product they use on the equivariant cohomology of $I_G(X)$ is {not} the Chen-Ruan orbifold product.
\end{remark}
\subsection{A decomposition of $K_G(X) \otimes \C$ and the non-Abelian localization theorem}
When $G$ acts 
quasi-freely
then by \cite{EdGr:00,EdGr:05,VeVi:02} the 
$\Rep(G)\otimes \C$-module $K_G(X)\otimes \C$
is supported at a finite number
of maximal ideals ${\mathfrak m}_\Psi \subset \Rep(G)\otimes \C$,
corresponding to conjugacy
classes $\Psi$ such that $I(\Psi) \neq \emptyset$. As in
\cite{EdGr:05}, we use the notation ${\mathfrak m}_\Psi$ to refer to
the maximal ideal of virtual representations whose character vanishes
on $\Psi$.  As a result, equivariant $K$-theory decomposes into a
direct sum of its localizations.
\begin{equation}
K_G(X)\otimes \C = \bigoplus_{\Psi} K_G(X)_{{\mathfrak m}_\Psi},
\end{equation}
where the sum is over the finite number of conjugacy classes $\Psi$
such that $I(\Psi)\neq \emptyset$.
\begin{remark}
If we view                $\Rep(G) \otimes \C$ as the ring of polynomial class
functions on $G$, then the component $\alpha_\Psi$ of a class $\alpha$ in
the summand $K_G(X)_{{\mathfrak m}_\Psi}$ equals $1_\Psi \alpha$,
where           $1_\Psi\in \Rep(G) \otimes \C$ is any polynomial satisfying
$1_\Psi(\Psi) = 1$ and $1_\Psi(\Psi') = 0$ for all other $\Psi' \in
\Supp(K_G(X)\otimes \C)$.  Since the support of $K_G(X)\otimes
\C$ is finite, such a function always exists. If $G$ is infinite,
there is no canonical choice for the function $1_\Psi$ because we
impose no conditions on the value away from the support of $K_G(X)
\otimes \C$. However, different choices for $1_\Psi$ yield the same
product $1_\Psi \alpha \in K_G(X) \otimes \C$.
\end{remark}

Let $f\colon
I_G(X) \to X$ be the projection.
Since $G$ is assumed to act with finite stabilizer, the map
$f$ is a finite l.c.i.~morphism.
Fix a conjugacy class $\Psi\in G$
such that $I(\Psi)\neq \emptyset$. By Proposition~\ref{prop.inertiadecomp},
$I(\Psi)$ is open and closed in $I_G(X)$, so the restriction of $f$
to $I(\Psi)$ is also finite and l.c.i.
Choose $h \in \Psi$ and identify
$I(\Psi) = G \times_Z X^h$, where $Z=Z_G(h)$ is the centralizer of $h$ in $G$.
Let ${\mathfrak m}_h \subset \Rep(Z)\otimes \C$ be the ideal of virtual
representations whose character vanishes at the 
central conjugacy class 
$\{h\} \subset Z$. 
Since $Z$ acts with finite stabilizer on
$X^h$, we have that $K_Z(X^h)_{{\mathfrak m}_h}$ is a summand in $K_Z(X^h)\otimes \C$.
Following \cite{EdGr:05}, we denote by $K_G(I(\Psi))_{{\rm cent}_\Psi}$
the summand in $K_G( I(\Psi))\otimes \C$ which is Morita equivalent to
$K_Z(X^h)_{{\mathfrak m}_h}$. By \cite[Lemma 4.6]{EdGr:05},
$K_G(I(\Psi))_{{\rm cent}_\Psi}$ is independent of the choice of
representative $h \in \Psi$. Having established the necessary
notation, the non-Abelian localization theorem is as follows.  
\begin{thm}\cite[Theorem 5.1]{EdGr:05}
The pushforward 
$f_{*}\colon K_G(I(\Psi))_{{\rm cent}_\Psi} \to K_G(X)_{{\mathfrak m}_\Psi}$
is an isomorphism. If $\alpha \in K_G(X)_{{\mathfrak m}_\Psi}$, then 
\begin{equation} \label{eq.explicitlocalization}
\alpha = f_*\left( {f^*\alpha\over{\lambda_{-1}(N_f^*)}}\right)
\end{equation}
\end{thm}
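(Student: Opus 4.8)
The plan is to prove this as an equivariant concentration (localization) statement: I would reduce everything to the centralizer $Z = Z_G(h)$ by Morita equivalence and then show that the displayed formula $\alpha \mapsto f^*\alpha/\lambda_{-1}(N_f^*)$ is a two-sided inverse to $f_*$ on the indicated summands. Using the identification $I(\Psi) = G \times_Z X^h$, the morphism $f$ is, $Z$-equivariantly and locally on $X^h$, the regular embedding $i \colon X^h \hookrightarrow X$, so that $N_f$ is identified with the normal bundle $N_{X^h/X}$, and the summands $K_G(I(\Psi))_{\mathrm{cent}_\Psi}$ and $K_G(X)_{\mathfrak{m}_\Psi}$ correspond to localizations in which the central class $h$ controls the support. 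The conceptual engine is the classical Atiyah--Segal--Thomason concentration theorem, which handles the case in which $h$ is central (so that $I(\Psi) = X^h$ and $f = i$ is $G$-equivariant); the content of the present statement is the reduction of the non-Abelian situation to this case.

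The first technical step is to check that $\lambda_{-1}(N_f^*)$ is invertible after localization. Since we work in characteristic $0$, the fixed locus $X^h$ is smooth and $i$ is a regular embedding, and $h$ acts on $N_{X^h/X}$ with all eigenvalues different from $1$. Decomposing $N_{X^h/X}$ into its $h$-eigenbundles and using multiplicativity of $\lambda_{-1}$, the evaluation of $\lambda_{-1}(N_f^*)$ at the central class $h$ is a product $\prod_\zeta (1-\zeta^{-1})^{m_\zeta}$ over the nontrivial eigenvalues $\zeta$ of $h$, which is a nonzero scalar. Hence $\lambda_{-1}(N_f^*)$ does not vanish at $\mathfrak{m}_h$ and becomes a unit in $K_Z(X^h)_{\mathfrak{m}_h} = K_G(I(\Psi))_{\mathrm{cent}_\Psi}$.

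With invertibility in hand I would verify the two composites. For $\alpha \in K_G(X)_{\mathfrak{m}_\Psi}$, the projection formula gives $f_*\big(f^*\alpha/\lambda_{-1}(N_f^*)\big) = \alpha \cdot f_*\big(1/\lambda_{-1}(N_f^*)\big)$, so it is enough to establish the normalization $f_*\big(1/\lambda_{-1}(N_f^*)\big) = 1$ on the $\mathfrak{m}_\Psi$-summand; this is exactly the concentration statement in the central case, transported along the Morita identification. For $\gamma \in K_G(I(\Psi))_{\mathrm{cent}_\Psi}$, I would compute $f^* f_* \gamma$ using the self-intersection formula for the finite l.c.i. morphism $f$, available through K\"ock's excess intersection formula recalled above, by passing to the fiber product $I(\Psi)\times_X I(\Psi)$.

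The main obstacle is the off-diagonal part of this self-intersection. Writing $I(\Psi)\times_X I(\Psi)$ as a disjoint union of components indexed by diagonal conjugacy classes in $\Psi \times \Psi$ (Proposition~\ref{prop.inertiadecomp2}), the diagonal component is isomorphic to $I(\Psi)$ and contributes precisely $\gamma\cdot\lambda_{-1}(N_f^*)$, while the remaining components are indexed by pairs of distinct conjugates $(g_1,g_2)$ of $h$ sharing a fixed point. The crux is to show that each such off-diagonal component contributes $0$ after localizing at $\mathrm{cent}_\Psi$: the corresponding excess class has $\Rep(Z)$-support disjoint from $\mathfrak{m}_h$, since the pair $(g_1,g_2)$ with $g_1\neq g_2$ forces a nontrivial eigenvalue in the relevant normal data, so the class is annihilated by the localization. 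This vanishing is exactly what distinguishes the non-Abelian localization theorem from the naive diagonal self-intersection formula. Once the off-diagonal terms are discarded, $f^* f_* \gamma = \gamma\cdot\lambda_{-1}(N_f^*)$ on the summand, and dividing by the unit $\lambda_{-1}(N_f^*)$ recovers $\gamma$; combined with the previous step this exhibits $f_*$ as an isomorphism with the stated inverse.
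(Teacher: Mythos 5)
First, a point of reference: the paper itself does not prove this statement at all --- it is imported verbatim from \cite[Theorem 5.1]{EdGr:05} and used as a black box to define $f^!$ and the $\star_\T$ product. So your proposal has to be measured against the proof in \cite{EdGr:05}, whose overall skeleton (projection formula, self-intersection with a diagonal term $\lambda_{-1}(N_f^*)$, invertibility of that class on the central summand, vanishing of off-diagonal contributions) your sketch does capture correctly in outline.

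The genuine gap is at the step you call the ``conceptual engine.'' You assert that the normalization $f_*\bigl(1/\lambda_{-1}(N_f^*)\bigr)=1$ in $K_G(X)_{{\mathfrak m}_\Psi}$ ``is exactly the concentration statement in the central case, transported along the Morita identification.'' Morita equivalence identifies $K_G(I(\Psi))$ with $K_Z(X^h)$, but it does \emph{not} identify $K_G(X)$ with $K_Z(X)$: the target of $f_*$ is a module over $\Rep(G)$ localized at ${\mathfrak m}_\Psi$, whereas the central (Thomason-type) concentration theorem for the pair $(Z,h)$ gives an isomorphism $K_Z(X^h)_{{\mathfrak m}_h}\to K_Z(X)_{{\mathfrak m}_h}$ over $\Rep(Z)$ localized at ${\mathfrak m}_h$. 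Moreover, under the Morita identification $f$ is the finite map $G\times_Z X^h\to X$, $(g,x)\mapsto gx$, which sweeps out the fixed loci of \emph{all} conjugates of $h$; it is not $Z$-equivariantly the regular embedding $X^h\hookrightarrow X$, so the central case does not simply ``transport.'' Bridging the two localizations --- equivalently, proving surjectivity of $f_*$, i.e., that $K_G(X)_{{\mathfrak m}_\Psi}$ is concentrated on $\bigcup_{g\in\Psi}X^g$ --- is precisely the non-Abelian content of the theorem, and in \cite{EdGr:05} it rests on the support theory for $K_G(X)\otimes\C$ (building on \cite{VeVi:02} and the completion results of \cite{EdGr:00}), not on Morita equivalence alone. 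Without that input, your argument yields at most injectivity of $f_*$ on the summand (granting your other steps), not the isomorphism. Two secondary gaps: invertibility of $\lambda_{-1}(N_f^*)$ in $K_Z(X^h)_{{\mathfrak m}_h}$ does not follow merely from nonvanishing of its ``value at $h$'' --- one needs Thomason's splitting-principle/nilpotence argument or control of the maximal ideals of $K_Z(X^h)\otimes\C$ lying over ${\mathfrak m}_h$; and the off-diagonal vanishing in the self-intersection computation is itself a support statement requiring proof --- note that for an off-diagonal component $\Itwo(\Phi(g_1,g_2))$ of $I(\Psi)\times_X I(\Psi)$ the element $g_2$ need not even centralize $g_1$, so ``a nontrivial eigenvalue in the relevant normal data'' is not by itself an argument.
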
  
\subsection{Multiplicative twisting in equivariant $K$-theory}
Let $Y$ be an algebraic space 
with the action of an algebraic group
$Z$ and let $V$ be a $Z$-bundle on $Y$.  Let $h$ be an automorphism of
the fibers of $V/Y$ which commutes with the action of $Z$ on $V$.  
We can consider an exponential analogue of the logarithmic trace and
define the multiplicative twist $V^{mult}(h)$ of $V$ in $K_Z(Y)
\otimes \C$ by the formula
\begin{equation} \label{eq.multtwist}
V^{mult}(h) = \sum_\chi \chi V_\chi,
\end{equation}
where the sum is over all eigenvalues $\chi$ for the action of $h$ on the fibers of $V/X$ and 
$V_\chi$ denotes the $\chi$-eigenspace. The formula \eqref{eq.multtwist} extends to an automorphism 
$$t_g \colon K_Z(X)\otimes\C \to K_Z(X)\otimes \C$$ with 
inverse $t_{g^{-1}}$.

As in \cite{EdGr:05, EdGr:08}, we will consider the twist in the following
special case. If $h \in \Psi$, then $h$ is central in $Z=Z_G(h)$ 
and acts trivially
on $X^h$. Hence there is an action of $h$ on the fibers of any
$Z$-bundle on $X^h$.  In this case, the automorphism $t_h$ induces an
isomorphism of the summands $K_Z(X^h)_{1}$ and
$K_Z(X^h)_{{\mathfrak m}_h}$. Via the usual Morita equivalence there is
  an induced isomorphism
$$t_\Psi \colon K_G(I(\Psi))_{1} \to K_G(I(\Psi))_{{\rm cent}_\Psi}$$
which is independent of the choice of representative $h \in \Psi$.
To simplify notation, let 
$$t \colon  K_G(I_G(X))_{1} \to \bigoplus_\Psi K_G(I_G(X))_{{\rm cent}_\Psi}$$
be the map whose restriction to the summand $K_G(I(\Psi))_1$ is $t_\Psi$.

In order to define the twisted product on $K_G(X)\otimes \C$, we will need to combine the non-Abelian localization map with the multiplicative twist.
\begin{defi}
Let $f^! \colon K_G(X)\otimes \C \to K_G(I_G(X))_{1}$ given by the formula
\begin{equation}
f^!\alpha_\Psi = t^{-1} \left({f^*\alpha_\Psi\over{\lambda_{-1}(N_f^*)}}\right)
\end{equation}
for $\alpha_\Psi \in K_G(X)_{{\mathfrak m}_\Psi}$.
\end{defi}
\begin{prop}
The map $f^!$ is an isomorphism $K_G(X) \to K_G(I_G(X))_{1}$ and
$(f^!)^{-1} = f_* \circ t$
\end{prop}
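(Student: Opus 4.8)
The plan is to recognize $f^!$ as a composite of two isomorphisms, computed summand by summand over the finitely many conjugacy classes $\Psi$ with $I(\Psi)\neq\emptyset$. Write $g\colon K_G(X)_{{\mathfrak m}_\Psi}\to K_G(I(\Psi))_{{\rm cent}_\Psi}$ for the map $\alpha_\Psi\mapsto f^*\alpha_\Psi/\lambda_{-1}(N_f^*)$, so that by definition $f^! = t^{-1}\circ g$ on each summand. Since the decompositions $K_G(X)\otimes\C = \bigoplus_\Psi K_G(X)_{{\mathfrak m}_\Psi}$ and $K_G(I_G(X))_1 = \bigoplus_\Psi K_G(I(\Psi))_1$ are indexed by the same finite set, it suffices to show that $f^!$ restricts to an isomorphism $K_G(X)_{{\mathfrak m}_\Psi}\to K_G(I(\Psi))_1$ on each summand and to identify the inverse there.

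First I would identify $g$ with the inverse of the push-forward. The non-Abelian localization theorem asserts that $f_*\colon K_G(I(\Psi))_{{\rm cent}_\Psi}\to K_G(X)_{{\mathfrak m}_\Psi}$ is an isomorphism and that $\alpha = f_*(f^*\alpha/\lambda_{-1}(N_f^*)) = f_*(g\alpha)$ for every $\alpha\in K_G(X)_{{\mathfrak m}_\Psi}$; it furthermore places $g\alpha$ in the central summand $K_G(I(\Psi))_{{\rm cent}_\Psi}$, so that $t^{-1}$ is defined on it. The identity $\alpha = f_*(g\alpha)$ says precisely that $g$ is a right inverse of $f_*$, and since $f_*$ is already known to be an isomorphism, $g$ must be its two-sided inverse, $g = f_*^{-1}$.

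Next I would invoke that the multiplicative twist $t_\Psi\colon K_G(I(\Psi))_1\to K_G(I(\Psi))_{{\rm cent}_\Psi}$ is an isomorphism, with inverse $t_\Psi^{-1}$ coming from $t_{h^{-1}}$, as established in the construction of $t$ above. Consequently $f^! = t^{-1}\circ g = t^{-1}\circ f_*^{-1}$ is a composite of isomorphisms on each summand, hence an isomorphism $K_G(X)_{{\mathfrak m}_\Psi}\to K_G(I(\Psi))_1$; summing over $\Psi$ yields the asserted isomorphism $K_G(X)\otimes\C\to K_G(I_G(X))_1$. Taking the inverse of the composite gives
$$(f^!)^{-1} = (t^{-1}\circ f_*^{-1})^{-1} = f_*\circ t,$$
which is the claimed formula.

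The computation is entirely formal once these two inputs are in place, so there is no serious analytic obstacle; the only point requiring care is the bookkeeping, namely verifying that $g$, $t$, and $f_*$ are all compatible with the decompositions indexed by $\Psi$, so that the three maps may be composed one summand at a time and the inverse of the composite taken summandwise. This compatibility is immediate from the fact that each $I(\Psi)$ is open and closed in $I_G(X)$ (Proposition~\ref{prop.inertiadecomp}) and that $t_\Psi$, $f_*$, and $g$ each preserve the index $\Psi$.
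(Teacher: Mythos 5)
Your proof is correct, and its skeleton matches the paper's: both arguments work summand by summand over the finitely many conjugacy classes $\Psi$ with $I(\Psi)\neq\emptyset$, both invoke the non-Abelian localization theorem of \cite{EdGr:05} on each summand, and both use that the multiplicative twist $t_\Psi$ is an isomorphism with inverse induced by $t_{h^{-1}}$. The one genuine difference is how the identity $f^!\circ(f_*\circ t)=\mathrm{id}$ is obtained. The paper checks both composites by direct computation: $(f_*\circ t)\circ f^!=\mathrm{id}$ follows from the explicit localization formula $\alpha = f_*\bigl(f^*\alpha/\lambda_{-1}(N_f^*)\bigr)$, and then the reverse composite is verified separately by a second computation that invokes the self-intersection formula $f^*f_*\beta = \lambda_{-1}(N_f^*)\cdot\beta$ for the finite l.c.i.\ morphism $f$ (which, in this $K$-theoretic setting, ultimately rests on K\"ock's excess intersection theorem). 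You bypass that second computation entirely: since the localization formula exhibits $g=f^*(\,\cdot\,)/\lambda_{-1}(N_f^*)$ as a right inverse of $f_*$, and $f_*$ is already known to be bijective, $g$ is automatically the two-sided inverse $f_*^{-1}$; hence $f^!=t^{-1}\circ f_*^{-1}$ is an isomorphism and $(f^!)^{-1}=f_*\circ t$ follows formally. Your route is thus slightly leaner---it needs no appeal to the self-intersection formula---while the paper's computation has the virtue of displaying concretely how that formula makes the two maps inverse to each other. Both arguments rely on the same implicit point, which you correctly flag: that $f^*\alpha/\lambda_{-1}(N_f^*)$ lands in the central summand $K_G(I(\Psi))_{{\rm cent}_\Psi}$, so that $t^{-1}$ may be applied to it; this is part of the content of the localization theorem.
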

\begin{proof}
The restriction of $f^!$ to $K_G(X)_{{\mathfrak m}_\Psi}$ is the
composition of isomorphisms $f^*_\Psi \colon K_G(X)_{{\mathfrak m}_\Psi} 
\to K_G(I(\Psi))_{{\rm cent}_\Psi}$ and $t^{-1}_\Psi \colon
K_G(I(\Psi))_{{\rm cent}_\Psi} \to K_G(I(\Psi))_1$, so it is an
isomorphism. Also, given $\alpha_\Psi \in K_G(X)_{{\mathfrak m}_\Psi}$,
we have
\begin{eqnarray*}
(f_*\circ t)(f^!\alpha_\Psi) & = & f_*\circ 
f^*\left({\alpha_\Psi\over{\lambda_{-1}(N_f^*)}}\right) \\
& = & \alpha_{\Psi},
\end{eqnarray*}
where the second equality follows from \eqref{eq.explicitlocalization}.
Conversely, if $\beta_\Psi \in K_G(I(\Psi))_1$, then
\begin{eqnarray*}
f^!(f_*(t(\beta_\Psi))) & = & 
t_\Psi^{-1}f^*\left(f_*\left({t(\beta_\Psi)\over{\lambda_{-1}(N_f^*)}}\right)\right)\\
& = & t^{-1}\left( \lambda_{-1}(N_f^*){t(\beta_\Psi)\over{\lambda_{-1}(N_f^*)}}\right)\\
& = & \beta_\Psi,
\end{eqnarray*}
where the second equality follows from the self-intersection formula for the map $f$.
\end{proof}

\subsection{The twisted product on $K_G(X) \otimes \C$}
We now have the necessary terminology to define the twisted product on 
$K_G(X) \otimes \C$. 
\begin{defi}
Given classes $\alpha_1, \alpha_2 \in K_G(X)\otimes \C$
set 
\begin{equation}
\alpha_1 \star_{\T} \alpha_2 = f_* t(f^!(\alpha_1) \star_{\kclass_\T} f^!(\alpha_2)))
\end{equation}
\end{defi}
The following is an immediate consequence Theorem~\ref{thm.assoc}.
\begin{thm} \label{thm.orbonkt}
The $\star_\T$ product on $K_G(X) \otimes \C$ is commutative and associative with identity
element equal to the 
the projection of $[{\mathscr O}_X]$ in $K_G(X)_1$.
\end{thm}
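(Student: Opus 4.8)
The plan is to observe that the $\star_\T$ product is nothing but the image of the $\star_{\kclass_\T}$ product under the isomorphism $f^!$, so that its algebraic properties are inherited formally from Theorem~\ref{thm.assoc}, and then to pin down the induced unit by a short computation on the component of the trivial conjugacy class.

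First I would rewrite the definition. By the preceding proposition $(f^!)^{-1} = f_*\circ t$, so the defining formula reads
$$\alpha_1 \star_\T \alpha_2 = (f^!)^{-1}\bigl( f^!(\alpha_1) \star_{\kclass_\T} f^!(\alpha_2)\bigr).$$
Thus $f^!\colon (K_G(X)\otimes\C, \star_\T) \to (K_G(I_G(X))_1, \star_{\kclass_\T})$ is a bijection intertwining the two products. For this to be meaningful I must first check that the summand $K_G(I_G(X))_1$ is closed under $\star_{\kclass_\T}$: this holds because, as recorded in the proof of Theorem~\ref{thm.orbrr}, the product $\star_{\kclass_\T}$ commutes with the $\Rep(G)$-module structure, and $K_G(I_G(X))_1$ is precisely the summand obtained by localizing that module structure at the augmentation ideal, so the complementary summand is an ideal and the product restricts. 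Once this is in place, commutativity and associativity of $\star_\T$ are immediate consequences of the commutativity (Proposition~\ref{prop.twistedprodidentity}) and associativity (Theorem~\ref{thm.assoc}) of $\star_{\kclass_\T}$, since any linear isomorphism intertwining two products transports these identities verbatim.

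It remains to identify the unit. The $\star_{\kclass_\T}$-identity on all of $K_G(I_G(X))$ is $[\mathscr{O}_{I(\{1\})}]$, and its projection $e$ onto the summand $K_G(I_G(X))_1$ is the identity for the restricted product; since $[\mathscr{O}_{I(\{1\})}]$ lives in the component indexed by $\Psi = \{1\}$, the class $e$ is the projection of $[\mathscr{O}_X]$ into $K_G(X)_1 = K_G(I(\{1\}))_1$. Then $(f^!)^{-1}(e) = f_* t(e)$, and I would evaluate this directly on the $\Psi = \{1\}$ component: here the representative is $h = 1$, so the multiplicative twist $t_{\{1\}}$ multiplies the single eigenspace by the eigenvalue $1$ and is the identity, while $f\colon I(\{1\}) = X \to X$ is the identity morphism and hence $f_*$ is the identity as well. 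Therefore $(f^!)^{-1}(e)$ is exactly the projection of $[\mathscr{O}_X]$ into $K_G(X)_1$, as asserted. I expect the main obstacle to be organizational rather than computational: the delicate points are verifying that $K_G(I_G(X))_1$ is stable under $\star_{\kclass_\T}$ (so the transport argument is legitimate) and that both $t$ and $f_*$ act trivially on the trivial-conjugacy-class component; with these in hand the theorem drops out of Theorem~\ref{thm.assoc}.
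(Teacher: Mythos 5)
Your proof is correct and follows essentially the same route as the paper, which simply declares the theorem an immediate consequence of Theorem~\ref{thm.assoc}: the content is exactly the transport-of-structure argument through the isomorphism $f^!$ that you spell out, including the closure of $K_G(I_G(X))_1$ under $\star_{\kclass_\T}$ and the identification of the unit on the $\Psi=\{1\}$ component. One small touch-up: the $\Rep(G)$-bilinearity you invoke shows directly that each localized summand, in particular $K_G(I_G(X))_1$ itself, is a $\star_{\kclass_\T}$-ideal (an element killed by a power of the augmentation ideal has all its products killed by the same power), which is the statement you actually need; the complementary summand being an ideal would not by itself imply that the product restricts to $K_G(I_G(X))_1$.
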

We also obtain as corollary of the Riemann-Roch isomorphism.
\begin{corol}
The map $\cho \circ f^! \colon K_G(X)\otimes \C \to A^*_G(I_G(X))\otimes \C$
is a ring isomorphism for the $\star_\T$ product on $K_G(X) \otimes \C$ and the $\star_{c_\T}$ product on
$A^*_G(I_G(X))$.
\end{corol}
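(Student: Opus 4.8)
The plan is to exhibit $\cho \circ f^!$ as a composition of two ring isomorphisms, one for each factor, and then simply compose them. Both factors are already known to be bijections: $f^!$ by the Proposition immediately preceding the statement (which also records the inverse $(f^!)^{-1} = f_* \circ t$), and $\cho$ by Theorem~\ref{thm.orbrr}. It therefore remains only to check that each factor respects the relevant products, after which the composite is automatically a ring isomorphism.

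First I would verify that $f^!$ intertwines the $\star_\T$ product on $K_G(X)\otimes\C$ with the $\star_{\kclass_\T}$ product on $K_G(I_G(X))_1$. This is a formal consequence of the way $\star_\T$ was defined. Applying $f^!$ to the defining formula $\alpha_1 \star_\T \alpha_2 = f_* t\bigl(f^!(\alpha_1)\star_{\kclass_\T} f^!(\alpha_2)\bigr)$ and using $(f^!)^{-1} = f_* \circ t$, i.e. $f^! \circ f_* \circ t = \mathrm{id}$, one obtains
\[
 f^!(\alpha_1 \star_\T \alpha_2) = f^!(\alpha_1)\star_{\kclass_\T} f^!(\alpha_2).
\]
Hence $f^!$ is a ring homomorphism, and being bijective it is a ring isomorphism; the identity elements correspond by Theorem~\ref{thm.orbonkt}.

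Next I would invoke Theorem~\ref{thm.orbrr}, which asserts precisely that $\cho$ is a homomorphism from the $\star_{\kclass_\T}$ product to the $\star_{c_\T}$ product and factors through an isomorphism $K_G(I_G(X))_1 \to A^*_G(I_G(X))\otimes\Q$. Extending scalars from $\Q$ to $\C$ — which preserves the ring structures (both products being defined $\Q$-linearly, hence $\C$-linearly) as well as the isomorphism — yields a ring isomorphism $K_G(I_G(X))_1\otimes\C \to A^*_G(I_G(X))\otimes\C$ for the same pair of products. Composing this with the isomorphism of the previous paragraph shows that $\cho\circ f^!$ is a ring isomorphism for the $\star_\T$ product on $K_G(X)\otimes\C$ and the $\star_{c_\T}$ product on $A^*_G(I_G(X))$, as claimed.

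I do not expect any serious obstacle: the substance of the statement lies entirely in its two inputs — the construction of $f^!$ together with its inverse $f_* \circ t$, and the orbifold Riemann--Roch Theorem~\ref{thm.orbrr} — so that the corollary is the purely formal composition of the two. The only points requiring a moment's care are the compatibility of the scalar extension $\Q\hookrightarrow\C$ with the products and the matching of identity elements, both of which are immediate from the cited results.
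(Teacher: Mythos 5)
Your proposal is correct and follows exactly the route the paper intends: the paper offers no written proof, presenting the corollary as an immediate consequence of Theorem~\ref{thm.orbrr} together with the construction of $f^!$, and your argument simply makes that formal composition explicit. The two points you verify — that $f^!$ intertwines $\star_\T$ with $\star_{\kclass_\T}$ by the very definition of $\star_\T$ and the identity $(f^!)^{-1}=f_*\circ t$, and that the Riemann--Roch isomorphism extends from $\Q$ to $\C$ coefficients — are precisely the details the paper leaves to the reader.
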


\def\cprime{$'$}

\end{document}